\let\@listlla\list
\def\list#1#2{\@listlla{#1}{#2\itemsep=2pt\parsep=0pt\topsep=3pt plus 1pt minus 1 pt}}
\newcommand{\tridiagram}[6]{{\par\par \centering
\@picture(120,120)(0,0) \put(30,95){\makebox(0,0)[r]{$#1$}}
\put(90,30){\makebox(0,0)[tl]{$#3$}}
\put(90,95){\makebox(0,0)[l]{$#2$}}
\put(60,102){\makebox(0,0)[b]{$#4$}}
\put(102,60){\makebox(0,0)[l]{$#6$}}
\put(50,50){\makebox(0,0)[tr]{$#5$}} \thinlines
\put(40,95){\vector(1,0){40}} \put(95,80){\vector(0,-1){40}}
\put(25,80){\vector(1,-1){55}}
\endpicture\par\par}\noindent\ignorespaces}
\def\@map#1#2[#3]{\mbox{$#1 \colon #2 \longrightarrow #3$}}
\def\map#1#2{\@ifnextchar [{\@map{#1}{#2}}{\@map{#1}{#2}[#2]}}
\newcommand{\Aut}[1]{\mbox{\rm Aut}{(#1)}}
\newcommand{\Out}[1]{\mbox{\rm Out}{(#1)}}
\newcommand{\Inn}[1]{\mbox{\rm Inn}{(#1)}}
\newcommand{\mr}{\mathbb{R}}
\newcommand{\mz}{\mathbb{Z}}
\newcommand{\mn}{\mathbb{N}}
\newcommand{\F}[1]{\ensuremath{\mathbb{F}_{#1}}}
\newtheorem{theorem}{Theorem}[section]
\newtheorem{theo}{Theorem}[]
\newtheorem{defi}{Definition}[]
\newtheorem{proposition}[theorem]{Proposition}
\newtheorem{corollary}[theorem]{Corollary}
\newtheorem{lemma}[theorem]{Lemma}
\theoremstyle{definition}
\newtheorem{definition}[theorem]{Definition}
\newtheorem{remark}[theorem]{Remark}
\title{A new class of a-T-menable Free-by-Free groups}
\author{Fran\c{c}ois Gautero}
\date{\today}
\address{Fran\c{c}ois Gautero, Universit\'e C\^ote d'Azur,
CNRS, LJAD (UMR CNRS 7351), Parc Valrose, 06108
Nice Cedex 2, France} \email{Francois.Gautero@univ-cotedazur.fr}
\keywords{Haagerup property, a-T-menability, free groups, semidirect products, improved relative train-track maps}
\subjclass[2000]{20E22, 20F65, 20E05}
\begin{document}
\begin{abstract}
We prove that a group
$G^\sigma_{n,k} := F_n \rtimes_\sigma \F{k}$, where $\F{n}$ and $\F{k}$
are respectively rank $n$ and rank $k$ free groups, and $\sigma \colon \F{k} \hookrightarrow \Aut{\F{n}}$
is a monomorphism such that $\sigma(\F{k})$ is a subgroup consisting entirely of polynomially growing automorphisms of $\F{n}$,
acts properly isometrically on a finite dimensional CAT(0) cube complex if some finite-index unipotent subgroup of $\sigma(\F{k})$ admits a Bestvina-Feighn-Handel representative satisfying two additional combinatorial properties.
This is achieved by exhibiting a proper space-with-walls structure in the sense of Haglund-Paulin.
In particular any such group $G^\sigma_{n,k}$ is a-T-menable in the sense of Gromov (equivalently satisfies the Haagerup property).
\end{abstract}

\maketitle

\section*{Introduction}
The Haagerup property is an analytical property on groups
introduced in \cite{Haagerup}, about the existence of a {\em proper conditionally negative definite function}:

\begin{defi}

A {\em conditionally negative definite function} on a topological group $G$ is a function $f \colon G \rightarrow \mr$ such that for any positive
integer $n$,
for any $\lambda_1, \cdots, \lambda_n \in \mr$  with $\displaystyle \sum^n_{i=1} \lambda_i = 0$,
for any $g_1,\cdots,g_n$ in $G$ one has $$\sum_{i,j} {\lambda}_i \lambda_j f(g^{-1}_i g_j) \leq 0.$$

A function $f \colon G \rightarrow \mr$ on a topological group $G$ is {\em proper} if $\displaystyle \lim_{g_i \to \infty} f(g_i) = \infty$.

\end{defi}

In a topological group $G$ a sequence of elements $(g_i)_{i \in \mn} \subset G$ tends to infinity with $i$ if and only if for any compact
$K \subset G$ there is $N \geq 0$ such that for any $i \geq N$, $g_i$ does not belong to $K$.
In this paper we will work with {\em discrete groups}, that
is groups with the discrete topology, equipped with a {\em word-length}:
if $G$ is a group with generating set $S$, the {\em word-length of $\gamma \in G$ with respect to $S$},
denoted by $|\gamma|_S$, is the minimum of the lengths of the words in $S \cup S^{-1}$ which define the element $\gamma$.
Saying that a sequence $(g_i)_{i \in \mn} \subset G$ tends to infinity with $i$ then amounts to saying that the word-lengths of the $g_i$'s tend to infinity
with $i$.

 In \cite{Haagerup} Haagerup property was proved to hold for free groups. It has later been
renewed by the work of Gromov, where it appeared under the term of {\em a-T-menability}.
The origin of this terminology is that on the one hand any amenable group satisfies this property and on the other hand it is a weak converse to
Kazhdan's property T: any group which satisfies both properties is a compact group (a finite group in the discrete case).

\begin{defi}[\cite{Haagerup,Cherix,AW}] 
The group $G$ {\em satisfies the Haagerup property}, or {\em is an a-T-menable group}, if and only if there exists a proper conditionally negative definite
function on $G$.
\end{defi}

We refer the reader to \cite{Cherix} for a detailed background and history of this property. Let us notice that very few is known about the preservation of a-T-menability under extensions:
whereas any semi-direct product $\mbox{(a-T-menable)} \rtimes \mathrm{(amenable)}$ is a-T-menable \cite{Jolissaint}, this is not the case for semi-direct products $\mbox{(a-T-menable)} \rtimes \mbox{(a-T-menable)}$. For instance
$\mz^2 \rtimes SL(2,\mz)$, which has the form $\mbox{(amenable)} \rtimes \mbox{(a-T-menable)}$ since $SL(2,\mz)$ admits a free subgroup of finite index
(a-T-menabilty passes from a finite index subgroup of a group
to the group itself), has relative property (T) and so is not a-T-menable (any conditionally negative definite function is bounded on $\mz^2$, see \cite{delaHarpeValette} - in fact for any free subgroup $\F{k}$
of $SL(2,\mz)$, $\mz^2 \rtimes \F{k}$ is not a-T-menable - see \cite{Burger}). In particular it is not known whether any free-by-free group is a-T-menable.

In \cite{Gautero} we exhibit a first non-linear family of groups of the form (free non abelian)-by-(free non abelian), termed {\em Formanek-Procesi groups}, which are a-T-menable.
More precisely, a Formanek-Procesi group is a semi-direct product $\F{n} \rtimes_\sigma \F{n-1}$ where:

\begin{itemize}
  \item $\F{n} = \langle x_1,\cdots,x_n \rangle$ and
$\F{n-1} = \langle t_1,\cdots,t_{n-1} \rangle$
are the free non-abelian groups respectively of ranks $n$ and $n-1$,
  \item letting $\Aut{\F{n}}$ denote the group of automorphisms
of $\F{n}$ then $\sigma \colon \F{n-1} \hookrightarrow \Aut{\F{n}}$ is the monomorphism defined by: $\sigma(t_j)(x_i) = x_i$ for any $i = 1,\cdots,n-1$
and $j = 1,\cdots,n-1$ whereas $\sigma(t_j)(x_n) = x_n x_j$ for $j=1,\cdots,n-1$.
\end{itemize}

The automorphisms in $\sigma(\F{n-1})$ have {\em linear growth}: this is a particular case of {\em polynomial growth}, where the polynomial function has degree one:

\begin{defi} \label{growth}
Let $G$ be a group with finite generating set $S$. An automorphism $\alpha$ of $G$ has {\em polynomial growth} if and only if
there is a polynomial function $P$ such that, for any $\gamma$ in
$G$, for any $m \in \mz$, $|\alpha^m(\gamma)|_S \leq P(m) |\gamma|_S$.
\end{defi}

The nature of the growth of $\alpha \in \Aut{\F{n}}$ only depends on its class $\overline{\alpha}$ in the group of {\em outer automorphisms} of $\F{n}$,
denoted by $\Out{\F{n}}$. Let us recall that $\Out{\F{n}} = \Aut{\F{n}} / \Inn{\F{n}}$, where $\Inn{\F{n}}$ is the group of {\em inner automorphisms $\alpha_w$ ($w \in \F{n}$):}
$\alpha_w(x) = w^{-1} x w$
for any $x \in \F{n}$.

\begin{defi} \label{growth2}
Let $G$ be a group with finite generating set $S$. An outer automorphism of $G$ has {\em polynomial growth} if and only if some (and hence any) automorphism in the class has.
\end{defi}

The purpose of this paper is to prove that the result of \cite{Gautero} is a particular case of a little bit more general phenomenon. Before the statement, let us recall that a semi-direct product $\F{n} \rtimes_\sigma \F{k}$ only depends, up to isomorphism, on the class of $\sigma(\F{k})$ in $\Out{\F{n}}$. Our construction relies upon the profound structure theorem of Bestvina-Feighn-Handel \cite{BFHpolynomial} about subgroups of polynomially growing automorphisms, even if we only appeal to the most elementary results of this theory. The important feature for us is that $\sigma(\F{k})$ admits a finite-index {\em unipotent subgroup}, represented by a pair $(\mathcal F,\Gamma)$ where $\Gamma$ is a {\em filtered graph} and $\mathcal F$ a set of {\em filtered homotopy equivalences} ({\em BFH-representative $(\mathcal F,\Gamma)$}): see subsection \ref{sectionBFH}. We introduce a particular (unfortunately quite restrictive) class of such unipotent subgroups of polynomially growing automorphisms (see Definition \ref{tied}) and prove the following theorem:

\begin{theo} 
\label{the theorem}

Let $k, n$ be two positive integers and let $\F{n}, \F{k}$ respectively denote the rank $n$ and rank $k$ free groups. Let $\Out{\F{n}}$ denote the group of outer automorphisms of $\F{n} = \langle x_1,\cdots,x_n \rangle$.
If $\sigma \colon \F{k} \hookrightarrow \Out{\F{n}}$
is a monomorphism such that $\sigma(\F{k})$ admits a tied, one-sided unipotent subgroup of outer automorphisms of $\F{n}$ then the group
$\F{n} \rtimes_\sigma \F{k}$ acts properly isometrically on some finite dimensional CAT(0) cube complex. In particular it
is a-T-menable in the sense of Gromov.
\end{theo}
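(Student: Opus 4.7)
The plan is to follow the Haglund–Paulin route: construct a proper space-with-walls structure (in the sense of Haglund–Paulin) on which $G = \F{n} \rtimes_\sigma \F{k}$ acts by wall-preserving automorphisms, and invoke their theorem to get a proper action on a finite-dimensional CAT(0) cube complex; a-T-menability then follows from Niblo–Roller/Cherix–Martin–Valette since such an action yields a proper conditionally negative definite function (a wall-distance). The main work therefore becomes (a) defining the walls, (b) checking that $G$ acts by bijections on the wall set, and (c) checking properness, that is, showing that the number of walls separating $1$ from any $g \in G$ tends to infinity with $|g|_S$.

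For step (a) I would first pass to the finite-index subgroup $G_0 = \F{n} \rtimes \F{k_0}$ where $\F{k_0} \subset \F{k}$ is the unipotent finite-index subgroup of $\sigma(\F{k})$ (a-T-menability is preserved under finite-index super/subgroups). By hypothesis $\sigma(\F{k_0})$ is represented by a BFH-pair $(\mathcal F, \Gamma)$, so $\Gamma$ comes with a filtration $\emptyset = \Gamma_0 \subsetneq \Gamma_1 \subsetneq \cdots \subsetneq \Gamma_N = \Gamma$ and each element of $\mathcal F$ preserves the filtration. I would take as the underlying set for the wall structure either $G_0$ itself (via the Cayley graph) or, more geometrically, the vertex set of a suitable mapping torus/tree-of-graphs model obtained by suspending $\Gamma$ over the Cayley graph of $\F{k_0}$, so that $\F{n}$ is identified with the fundamental group of $\Gamma$. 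Walls are then constructed level by level: for each edge (or stratum) in $\Gamma_i \setminus \Gamma_{i-1}$, one produces a family of walls in the suspension, indexed by the integer translates along the suspending directions; this is exactly the pattern used in the Formanek–Procesi case of \cite{Gautero} and generalizes because the filtered homotopy equivalences respect the level structure.

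For step (b), equivariance should follow almost by construction: the $\F{n}$-action permutes the walls in each level freely (it acts on the universal cover of $\Gamma$), while the $\F{k_0}$-action, being realized by filtered homotopy equivalences from $\mathcal F$, sends level-$i$ walls to level-$i$ walls. The ``tied'' hypothesis in Definition \ref{tied} (which I have not seen yet but can anticipate from context) is presumably what guarantees that the wall families at different levels are coherently linked so that a global $G_0$-equivariant wall structure exists; the ``one-sided'' hypothesis is presumably what guarantees that the walls actually separate the space into exactly two halfspaces (an orientability-type condition preventing a wall from crossing itself in the suspension).

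Step (c), properness, I expect to be the main obstacle and the place where the combinatorial hypotheses really pay off. The argument should be bipartite: walls coming from the top stratum $\Gamma_N \setminus \Gamma_{N-1}$ separate pairs that differ in the $\F{k_0}$-direction (giving a lower bound proportional to the $\F{k_0}$-word-length), while walls coming from lower strata separate pairs that differ in the $\F{n}$-direction; the polynomial growth of the automorphisms, together with the one-sided condition, has to be used to show that even when a long word in $\F{n}$ gets severely distorted by conjugation in $G_0$, enough walls from low strata still separate it from the identity. The delicate estimate will be the interaction between the two: one must rule out the possibility that a wall counted as ``separating in the $\F{n}$ direction'' gets cancelled after a long detour in the $\F{k_0}$-direction. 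This is exactly where the tied/one-sided hypotheses should be used, and I expect the proof to proceed by induction on the filtration level $i$, bounding from below the number of level-$i$ walls separating $1$ from $g$ in terms of $|g|_S$ minus contributions already accounted for at higher levels.
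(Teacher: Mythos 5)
Your outline matches the paper's strategy at the top level: pass to the finite-index unipotent subgroup, build the suspension complex of a BFH-representative, equip its universal cover with a $G$-invariant wall structure consisting of one family of walls detecting the $\F{k}$-direction and another detecting the $\F{n}$-direction, prove properness by a counting argument, and invoke the Chatterji--Niblo cubulation theorem. However, there is a genuine gap, and it sits exactly at the point you defer: the construction of the walls in the $\F{n}$-direction. The naive recipe --- for each (stratum of each) edge $E$, take the orbit of $E$ under the subgroup generated by the lifts and declare the resulting set of $1$-cells to be the cuts of a wall --- only works when $E$ is \emph{topmost}. If $E$ is not topmost, some square of the $2$-complex contains $E$ in its top while its bottom carries a different, higher label; the boundary of that square then provides a path joining the two endpoints of $E$ that crosses the candidate cut-set an odd number of times, so the ``two sides'' are not well defined and no wall exists. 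Repairing this is the technical heart of the paper: one must add vertical cuts ($t_i$-edges selected by a $(\mz/2\mz)^k$-coloring) and extra horizontal cuts (the companion blocks, stairs and mirrors of Definitions \ref{atomion}--\ref{oprotein}) so that \emph{every} square of the complex has an even number of cuts in its boundary (Proposition \ref{0 ou 2}); only then does the parity of cuts along a path depend solely on its endpoints, yielding a partition. The tied and one-sided hypotheses are consumed precisely in making this coloring globally consistent under the replicative subgroup (Lemmas \ref{coloriage}, \ref{one}--\ref{four}), not, as you guess, in an ``orientability of a single wall'' statement. Your proposal does not contain the idea that makes the non-topmost case work, and properness also depends on it: the lower bound on the number of walls crossed by a horizontal geodesic (Lemmas \ref{easy}--\ref{C}) uses the precise placement of the extra cuts relative to higher EoE-cells.

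A second, smaller gap: Chatterji--Niblo does not automatically give a \emph{finite-dimensional} cube complex; the dimension equals the supremum of cardinalities of families of pairwise-crossing walls, which must be bounded separately. The paper does this in Proposition \ref{allez} (via Lemmas \ref{2}, \ref{contrainte horizontale} and \ref{vi}), and your proposal does not address it. Your bipartite properness heuristic also misattributes the $\F{k}$-direction walls to the top stratum of the filtration of $\Gamma$; in the paper these are the vertical walls, i.e.\ preimages under the orbit-map of the classical walls of the Cayley tree of $\F{k}$, and are unrelated to the filtration.
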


Let us immediately emphasize that the above action on the cube complex is not necessarily cocompact. We give a sharp minimal bound for the dimension of the cube complex in Lemma \ref{minimal bound}. A {\em cube complex} is a metric polyhedral
complex in which each cell is isomorphic to the Euclidean cube $[0, 1]^n$
and the gluing maps are isometries. A cube complex is called {\em CAT(0)}
if the metric induced by the Euclidean metric on the cubes turns it
into a CAT(0) metric space (see \cite{Bridson}).
In order to get the above statement, we prove the existence of a
{\em space with walls} structure as introduced by Haglund and Paulin \cite{HaglundPaulin}. A theorem of Chatterji-Niblo \cite{ChatterjiNiblo}
or Nica \cite{Nica} (for similar constructions in other settings, see also \cite{NibloRoller}, \cite{Sageev} or \cite{GH}) gives the announced action on a CAT(0)
cube complex: all this is quickly recalled in Section \ref{space with walls}. A key-point of our structure consists in the introduction, in Section \ref{diagonal walls}, of the so-called ``diagonal walls'', where we need the existence of particular BFH-representative.  

%In fact our construction would work more generally when dropping the ``homotopy equivalence'' assumption: what we really need are the combinatorial properties satisfied by the maps of the Bestvina-Feighn-Handel filtration.

%that is for the mapping-cylinder (see Section \ref{suspensions}) of any finite family of filtered ``graph-maps'' (maps on graphs fixing the vertices and sending edges to edge-paths) on a filtered graph $\Gamma$ whose incidence matrices (the matrices whose $(i,j)$-coefficients give the number of times the image of an edge $e_j$ covers an edge $e^{\pm 1}_i$) are {\em unipotent} that is upper-triangular with $1$ on the diagonal.

\section{Graphs and free group automorphisms}

\subsection{Graphs and free groups: generalities}
\label{generalites}
A graph $\Gamma$ is a $1$-dimensional CW-complex. A graph is {\em finite} if the number of cells is finite. The $0$-cells are termed {\em vertices}. Each $1$-cell admits two distinct orientations: we denote by $E^+(\Gamma)$ the set of $1$-cells equipped with some chosen orientation, termed {\em positively oriented edges} whereas an {\em (oriented) edge} denotes a $1$-cell equipped with whatever of its two orientations. If $e$ denotes an edge then $e^{-1}$ denotes the edge with the opposite orientation. We denote by $i(e)$ (resp. $t(e)$) the initial (resp. terminal) vertex of an edge $e$. Any set of edges naturally defines a unique set of $1$-cells by forgetting the orientations and any set of $1$-cells is in bijection with the set of p.o. edges so that any terminology for $1$-cells will be used for edges and vice-versa. The {\em valency} of a vertex $v$ in a graph $\Gamma$ is the number of edges which admit $v$ as initial vertex. Our graphs have no valency $2$-vertices. A {\em subgraph $U$ of a graph $\Gamma$} is any $1$-dimensional CW-complex contained in $\Gamma$ (this implies in particular that, if $U$ contains a $1$-cell $e$ then it contains the two $0$-cells to which $e$ is attached). 

A {\em path} (resp. {\em loop}) in a topological space $X$ is the image of a continuous map from the interval (resp. circle) to $X$. A path is {\em reduced} if the map is locally injective. A loop is {\em simple} if the map is an embedding. An {\em edge-path} in a graph is a path between two vertices of the graph which does not backtrack in the interior of the edges,
i.e. the map is locally injective at the points whose images lie in the interior of the edges. It uniquely defines, and is uniquely defined by, the ordered sequence of oriented edges that it crosses. A {\em graph-map} is a continuous map on a graph, which sends vertices to vertices and edges to edge-paths, it is {\em reduced} if the images of the edges are. 

A {\em tree} is a contractible graph, its {\em ends} are its valency $1$-vertices. 
%\begin{definition}
%Let $\mathcal T$ be a tree, let $T \subset \mathcal T$ be a subtree and let $\mathcal E$ be a set of $1$-cells in $T$. We say that {\em $T$ is reduced relative to $%\mathcal E$} if no homotopy fixing both the ends of $T$ and $\mathcal E$ carries $T$ to a proper subset of itself.
%\end{definition}
In a tree $\mathcal T$, there is a unique locally injective path between any two points, termed a {\em geodesic}.  This implies in particular that any $1$-cell $E$ cuts the tree in two connected components: the {\em left-side of $E$} (resp. {\em right-side of $E$}) is the connected component containing the initial (resp. terminal) vertex of the associated p.o. edge. More generally:

\begin{definition}
\label{sides}
Let $\mathcal T$ be a tree, let $E$ be a $1$-cell in $\mathcal T$ and let $\mathcal E$ be a set of $1$-cells of $\mathcal T$. The {\em left-side of $E$ w.r.t. $\mathcal E$} (resp. {\em right-side of $E$ w.r.t. $\mathcal E$}) is the union of all the cells in $\mathcal T$ which are connected to the initial vertex of the p.o. edge associated to $E$ by a geodesic crossing an even (resp. odd) number of cells in $\mathcal E$. An edge-path $p$ in $\mathcal T$ {\em spans $E$} (w.r.t. $\mathcal E$) if its initial and terminal vertices lie in distinct sides of $E$ (w.r.t. $\mathcal E$).
\end{definition}

 The universal covering of a finite graph $\Gamma$ is an infinite tree $\mathcal T$ with covering map $\pi_\Gamma \colon \mathcal T \rightarrow \Gamma$. We will always assume that the covering-maps preserve the orientations, that is choosing an orientation on the edges of a lift $\mathcal T$ of a graph $\Gamma$ amounts to choosing an orientation on the edges of $\Gamma$.

We will denote 

\begin{itemize}
  \item by $T^q$ the {\em q-od}, that is the tree with exactly one valency $q$-vertex $v_0$, exactly $q$ ends $v_1,\cdots,v_q$, and $q$ edges $e_i$ in $E^+(T^q)$ oriented from $v_0$ to $v_i$ ($T^q$ is homeomorphic to the cone over $\{v_1,\cdots,v_q\}$),
  \item by $R_q$ the {\em rose with $q$ petals}, that is the graph with exactly one $2q$-valency vertex $v_0$ and $q$ edges in $E^+(R_q)$ with $v_0$ as both initial and terminal vertex,
  \item by $T^\infty_q$ the universal covering of $R_q$, that is the infinite $2q$-valent tree.
\end{itemize}

The fundamental group of a finite graph is a finite rank free group $\F{n}$ so that the universal covering of $\Gamma$ comes equipped with a free, cocompact, left-action of $\F{n}$, the free group being the group of deck-transformations of this covering. More precisely, let us term {\em maximal tree $T$} in $\Gamma$ a tree containing all the vertices of $\Gamma$. We then have the following  

\begin{definition}
\label{ddb}
A {\em  $\F{n}$-tree} $\mathcal T$ is the universal covering  of a finite graph $\Gamma$ together with the choice of a lift $\widetilde{T}_e$  of a maximal tree $T$ in $\Gamma$, and a homotopy-equivalence from $\Gamma$ to $R_n$ which collapses $T$, and carries the positively oriented edges not in $T$, termed {\em essential p.o. edges}, to the positively oriented edges of $R_n$. The {\em essential $1$-cells} of $\mathcal T$ are the lifts of the essential $1$-cells of $\Gamma$ and the {\em maximal trees $\widetilde{T}_w$} in $\mathcal T$ are the lifts of the maximal tree $T$ in $\Gamma$. A $1$-cell in $\mathcal T$ is {\em exceptional} if it is the lift of a $1$-cell which separates $\Gamma$ in two connected components. An {\em EoE-cell of $\mathcal T$} is a $1$-cell which is either essential or exceptional. We denote by $EoE(\Gamma)$ the number of EoE-cells in $\Gamma$.
\end{definition}

We so get an identification of
a subset of the
vertices of $\mathcal T$ with $\F{n}$ by looking at any orbit $\{g . v \}_{g \in \F{n}}$, $v \in V(\widetilde{T}_e)$, we set $g.\widetilde{T}_e = \widetilde{T}_g$. The number of distinct $\F{n}$-orbits of vertices
is equal to the number of vertices in $\Gamma$. Each vertex in $\mathcal T$ so inherits a {\em $\F{n}$-label}, all the vertices in a same lift $\widetilde{T}_{g}$ carrying the same label $g$.

Observe that an edge-path $p = E^{\epsilon_1}_{j_1} E^{\epsilon_2}_{j_2} \cdots E^{\epsilon_r}_{j_r}$ ($\epsilon_i = \pm 1$, $E_{j_i} \in E^+({\mathcal T})$) in the universal covering $\mathcal T$ of a finite graph $\Gamma$ also admits a description by the edges of $\Gamma$ by considering the edge-path $\pi_\Gamma(p) = e^{\epsilon_1}_{j_1} e^{\epsilon_2}_{j_2} \cdots e^{\epsilon_r}_{j_r}$ in $\Gamma$; beware however that $p$ is only defined up to a left $\F{n}$-translation when considering this description by $\pi_\Gamma(p)$. The edges of $\Gamma$ are {\em $\Gamma$-labels} for the edge-path $p$ in $\mathcal T$. An edge-path $p$ is {\em incident to $g \in \F{n}$} if $p$ begins (one says that $p$ is {\em outgoing}) or ends (one says that $p$ is {\em incoming}) at $g$. An edge-path $p$ uniquely defines an element of $\F{n}$ by considering the ordered sequence of essential edges that it crosses since each essential edge is {\em $\F{n}$-labelled} by a generator $\{x^{\pm 1}_1,\cdots,x^{\pm 1}_n\}$ of $\F{n}$. We will sometimes call {\em $\F{n}$-generator} a reduced edge-path in $\mathcal T$ which projects to a simple loop in $\Gamma$ and contains only one essential edge, whose label is then the label of the $\F{n}$-generator. 
The {\em label}, in $\Gamma$ or $\F{n}$, {\em of a $1$-cell in $\mathcal T$} is the union of the labels of the two corresponding edges. 

\begin{definition}

Let $\pi_\Gamma \colon \mathcal T \rightarrow \Gamma$ be some $\F{n}$-tree. 

\begin{enumerate}

  \item An essential $1$-cell is {\em incident} (resp. {\em incoming}, resp. {\em outgoing}) to (resp. at) a subgraph $U$ of $\mathcal T$ if it is incident (resp. incoming, resp. outgoing) to (resp. at) $g \in \F{n}$ which is the label of some $v \in V(U)$, and is not contained in $U$. The set of all the essential cells incident to $U$ is the {\em crown of $U$}, denoted by $Cr(U)$.  The {\em completion} of a subgraph $U$ of $\mathcal T$, denoted by $\overline{U}$, is the smallest tree containing $U$ whose union with the crown of $U$  is also a tree.
  
  \item Two essential cells in the crown of $U$ are {\em coherent}  if one is incoming, the other is outgoing and  they have same $\F{n}$-label. They are {\em twins} if in addition the $\F{n}$-label of the edge-path from the initial vertex of the incoming essential cell to the terminal vertex of the outgoing one is a power of their $\F{n}$-label. 
  
  \item The {\em shift $h_{\gamma,\gamma^\prime}$} at a coherent pair $\{\gamma,\gamma^\prime\}$ is the left-translation which sends $\gamma$ to $\gamma^\prime$.
 \end{enumerate}

\end{definition}

\begin{lemma}
\label{partition}
Let $\mathcal T$ be some $\F{n}$-tree and let $U$ be a subtree of $\mathcal T$. Then each $\gamma \in Cr(U)$ admits exactly one twin, denoted by $\gamma^{tw}$. The collection of all the pairs of essential cells $\{\gamma,\gamma^{tw}\} \subset Cr(U)$ is unique (up to orientation) and defines a partition of $Cr(U)$.
\end{lemma}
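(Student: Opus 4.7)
The plan is to reduce the partition statement to a separate analysis for each generator $x_j$ of $\F{n}$ and each coset $h\langle x_j\rangle$. For such a choice, the lifts $\widetilde{T}_{hx_j^m}$ of the maximal tree $T$, $m\in\mathbb Z$, together with the unique $x_j$-essential cells $E_m$ joining $\widetilde{T}_{hx_j^m}$ to $\widetilde{T}_{hx_j^{m+1}}$ form a bi-infinite subtree of $\mathcal T$, which I will call the ``$(j,h)$-axis''. Every essential cell of $\F{n}$-label $x_j$ lies on exactly one such axis, the axis being determined by the $\langle x_j\rangle$-coset of any label of its endpoints.

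First I would verify that two coherent cells eligible to be twins must lie on the same axis. If $\gamma_1$ is incoming at a vertex of label $g_1$ and $\gamma_2$ is outgoing at a vertex of label $g_2$, both of common $\F{n}$-label $x_j$, then the initial vertex of $\gamma_1$ has label $g_1 x_j^{-1}$ and the terminal vertex of $\gamma_2$ has label $g_2 x_j$, so the $\F{n}$-label of the reduced geodesic edge-path between them in the tree $\mathcal T$ equals $x_j\,g_1^{-1}g_2\,x_j$. This element is a power of $x_j$ if and only if $g_1^{-1}g_2\in\langle x_j\rangle$, i.e., $g_1$ and $g_2$ lie in a common $\langle x_j\rangle$-coset. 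Hence twins can only be found on a single axis, and it suffices to prove the lemma axis by axis.

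Fix a $(j,h)$-axis and set $I=\{m\in\mathbb Z\colon\widetilde{T}_{hx_j^m}\cap U\neq\emptyset\}$. Since $U$ is a subtree of the tree $\mathcal T$ it is geodesically convex. The reduced geodesic in $\mathcal T$ between two of its vertices with labels $hx_j^{m_1}$ and $hx_j^{m_2}$ has $\F{n}$-label $x_j^{m_2-m_1}$ and therefore crosses precisely the intermediate axis cells $E_{m_1},\ldots,E_{m_2-1}$, all of which must lie in $U$. From this one deduces both that $I$ is an interval in $\mathbb Z$ and, more importantly, that $E_m\subset U$ if and only if $\{m,m+1\}\subset I$. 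Assuming $U$ finite so that $I=[a,b]$, exactly two axis cells remain in $Cr(U)$: the cell $E_{a-1}$, incoming at the vertex of label $hx_j^a$, and the cell $E_b$, outgoing at the vertex of label $hx_j^b$. A direct computation gives path label $x_j^{b-a+2}$, a power of $x_j$, so $\{E_{a-1},E_b\}$ is a twin pair, and plainly the only one on the axis.

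Collecting these pairs over all $(j,h)$-axes meeting $L(U)$ (a finite collection provided $U$ is finite, as will be the case in our applications) produces the claimed partition of $Cr(U)$. The main technical point lies in the convexity sub-claim $E_m\subset U\iff\{m,m+1\}\subset I$, which uses crucially that each generator of $\F{n}$ corresponds to a single essential edge of $\Gamma$ — hence to a unique $x_j$-cell between any two adjacent lifts of the maximal tree — together with the convexity of the subtree $U$ inside the tree $\mathcal T$; once this is settled, uniqueness of the twin and the partition fall out of the interval description of $I$.
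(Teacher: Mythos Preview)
The paper does not actually supply a proof of this lemma; it is stated and immediately followed by its corollary, so the author evidently regards it as a routine consequence of the definitions. Your argument via the $(j,h)$-axis decomposition is the natural way to make the statement precise, and the computations (the twin condition reducing to membership in a common $\langle x_j\rangle$-coset, the convexity of $I$, the identification of the two boundary cells $E_{a-1}$ and $E_b$) are all correct.

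One remark: the lemma as stated in the paper does not assume $U$ is finite, and your argument needs each interval $I$ to be bounded (otherwise an axis with $I=[a,\infty)$ would contribute a single crown cell $E_{a-1}$ with no twin). You flag this yourself, and since every application in the paper is to finite subtrees such as the completions $\overline{Bt(E)}$ and the bottoms $B(E)$, the restriction is harmless; but strictly speaking the lemma requires that the set of labels of $V(U)$ meet each $\langle x_j\rangle$-coset in a finite set.
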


\begin{corollary}
\label{corpartition}
With the notations and assumptions of Lemma \ref{partition}, let $\F{n} = \langle x_1,\cdots,x_n \rangle$. For any $\gamma \in Cr(U)$ the shift $h_{\gamma,\gamma^{tw}} \colon \gamma \mapsto \gamma^{tw}$ is uniquely defined and is conjugate to a power of some $x_i$, $1 \leq i \leq n$.
\end{corollary}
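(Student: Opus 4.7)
The plan is to reduce the corollary to a direct bookkeeping of $\F{n}$-labels in the universal covering $\mathcal T$. The uniqueness of $h_{\gamma,\gamma^{tw}}$ is immediate from Lemma~\ref{partition} together with the fact that the deck-transformation action of $\F{n}$ on $\mathcal T$ is free: once the twin $\gamma^{tw}$ is unique, at most one deck transformation can take $\gamma$ to $\gamma^{tw}$, and at least one such transformation does exist because both cells project to the same $1$-cell of $\Gamma$ (they share the same $\F{n}$-label).

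For the conjugacy statement, I would argue as follows. Choose orientations so that the common $\F{n}$-label of $\gamma$ and $\gamma^{tw}$ is some generator $x_i$, and let $g$ and $g'$ denote the labels of the initial vertex of the outgoing cell $\gamma$ and of the initial vertex of the incoming cell $\gamma^{tw}$, respectively. With the convention that an essential edge of $\F{n}$-label $x_i$ runs from a vertex labeled $a$ to a vertex labeled $a x_i$, the terminal vertices of $\gamma$ and $\gamma^{tw}$ carry labels $g x_i$ and $g' x_i$. Since the $\F{n}$-label of a reduced edge-path from a vertex labeled $a$ to a vertex labeled $b$ is $a^{-1} b$, the twin condition forces $(g')^{-1} g x_i = x_i^k$ for some $k \in \mz$, so that $g' = g x_i^{1-k}$.

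The shift $h_{\gamma,\gamma^{tw}}$ is then the unique element of $\F{n}$ realising $v_1 \mapsto v_4$ (with $v_1$, $v_4$ the initial vertices of $\gamma$, $\gamma^{tw}$), i.e.\ the unique $h$ with $h g = g'$. This gives $h = g' g^{-1} = g\, x_i^{1-k}\, g^{-1}$, which is by construction conjugate in $\F{n}$ to the power $x_i^{1-k}$ of $x_i$, exactly the required conclusion. I do not foresee any real obstacle; the only mild care needed is the bookkeeping of orientations and of left-versus-right conventions for labels, but once those are fixed the proof amounts to a two-line computation in $\F{n}$ from the definitions of \emph{twin} and of \emph{shift}.
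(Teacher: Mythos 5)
The paper states both Lemma \ref{partition} and this corollary without proof, so there is nothing of the author's to compare your argument against; your verification is correct and is clearly the intended one. Uniqueness of the shift follows, as you say, from the uniqueness of the twin plus freeness of the deck action of $\F{n}$ on $\mathcal T$ (existence holding because two essential cells with the same $\F{n}$-label are lifts of the same $1$-cell of $\Gamma$), and the twin condition $(g')^{-1}gx_i = x_i^{k}$ gives $h_{\gamma,\gamma^{tw}} = g'g^{-1} = g\,x_i^{1-k}g^{-1}$, which is exactly the label bookkeeping the definitions of \emph{twin} and \emph{shift} are set up to produce.
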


\subsection{An invariant tree for unipotent subgroups of $\Out{\F{n}}$} \hfill

\label{sectionBFH}

We recall the notions and results we need from Bestvina-Feighn-Handel theory.

\begin{definition}[\cite{BFHpolynomial}]
An outer automorphism of $\F{n}$ is {\em unipotent} if the automorphism that it induces on $H_1(\F{n};\mz) = \mz^n$ is unipotent.
\end{definition}

\begin{lemma}[\cite{BFHexponentiel},Corollary 5.7.6] 
\label{BFH1}
Any subgroup of polynomially growing outer automorphisms of $\Out{\F{n}}$ admits a unipotent subgroup of finite index.
\end{lemma}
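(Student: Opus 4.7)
The plan is to reduce the assertion to a classical fact about subgroups of $GL(n,\mz)$ via the abelianization map, just as in the proof given in \cite{BFHexponentiel}. Let $\rho \colon \Out{\F{n}} \to GL(n,\mz)$ be the homomorphism induced by the action on $H_1(\F{n};\mz) \cong \mz^n$. For any polynomially growing $\Phi \in \Out{\F{n}}$, Definition \ref{growth2} forces $\|\rho(\Phi)^m v\| \le P(m) \|v\|$ for some polynomial $P$ and every integer $m$ and vector $v \in \mz^n$. Polynomial growth of the powers of a matrix forces every complex eigenvalue to lie on the unit circle; since eigenvalues of an integer matrix are algebraic integers, Kronecker's theorem then guarantees that every eigenvalue of $\rho(\Phi)$ is a root of unity. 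Thus, if $H \le \Out{\F{n}}$ consists of polynomially growing outer automorphisms, its image $\bar H := \rho(H)$ is a subgroup of $GL(n,\mz)$ in which every element has roots-of-unity spectrum.

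Next I would produce a finite-index unipotent subgroup of $\bar H$. Since every element of $\bar H$ has eigenvalues on the unit circle, $\bar H$ contains no hyperbolic element, and Tits' alternative forces $\bar H$ to be virtually solvable. After passing to a finite-index solvable subgroup and applying the Lie--Kolchin theorem, one obtains a further finite-index subgroup $\bar H_0 \le \bar H$ that is simultaneously triangularisable in some basis of $\overline{\mq}^n$. The diagonal entries of any $A \in \bar H_0$ are then its eigenvalues, each a root of unity of degree at most $n$ over $\mq$. Since the set of roots of unity of bounded degree over $\mq$ is finite, the product of diagonal characters maps $\bar H_0$ into a finite group; its kernel $K$ is a finite-index subgroup of $\bar H$ consisting of matrices with only $1$'s on the diagonal, i.e., of unipotent matrices.

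Finally, pull back: $H' := \rho^{-1}(K) \cap H$ is a finite-index subgroup of $H$, and by the very definition of ``unipotent'' for an outer automorphism (its induced action on $H_1(\F{n};\mz)$ being a unipotent matrix), every element of $H'$ is unipotent. This $H'$ is the required finite-index unipotent subgroup.

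The main obstacle is the middle step: producing a unipotent finite-index subgroup of $\bar H \le GL(n,\mz)$ purely from the hypothesis that every element has roots-of-unity spectrum. This requires two pieces of algebraic input, Tits' alternative and Lie--Kolchin, to reduce to a triangular setting, together with the elementary but essential observation that eigenvalues of integer matrices of fixed size can only take finitely many root-of-unity values. The pull-back from $GL(n,\mz)$ to $\Out{\F{n}}$ is then automatic, since the BFH notion of unipotence is defined purely at the level of the abelianization.
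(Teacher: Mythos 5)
This lemma is not proved in the paper: it is quoted verbatim from Bestvina--Feighn--Handel (Corollary 5.7.6 of the cited reference), so your proposal has to be judged against that source rather than against an argument in the text. Your outer reduction is sound and is indeed the intended one: polynomial growth of $\alpha^m$ on word lengths bounds $\|\rho(\Phi)^m\|$ polynomially, forces the spectral radius of $\rho(\Phi)$ to be $1$, and Kronecker (applied to algebraic integers all of whose conjugates are again eigenvalues, hence of modulus $\le 1$, and nonzero since $\det=\pm1$) makes every eigenvalue a root of unity; and the pull-back at the end is automatic because the paper's Definition of ``unipotent'' lives entirely on $H_1(\F{n};\mz)$.

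The genuine gap is the sentence ``$\bar H$ contains no hyperbolic element, and Tits' alternative forces $\bar H$ to be virtually solvable.'' The Tits alternative is a dichotomy between virtual solvability and the presence of a nonabelian free subgroup; absence of elements with an eigenvalue off the unit circle is not one of its hypotheses. To invoke it you must show that a group \emph{all} of whose elements are quasi-unipotent cannot contain $F_2$, and this is not obvious: the standard ping-pong free subgroups of $SL(2,\mz)$ are generated by unipotent matrices, so quasi-unipotence of generators is no obstruction, and the statement you need for all elements is essentially of the same depth as the lemma itself. The clean repair (and, up to presentation, the argument behind the cited Corollary 5.7.6) bypasses Tits and Lie--Kolchin entirely: intersect $\bar H$ with the level-$3$ congruence subgroup $\Gamma(3)=\ker\bigl(GL(n,\mz)\to GL(n,\mz/3\mz)\bigr)$, which has finite index and is \emph{net} in Borel's sense --- no nontrivial root of unity occurs as an eigenvalue of any of its elements (the case of torsion elements is Minkowski's lemma). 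Hence every quasi-unipotent element of $\bar H\cap\Gamma(3)$ is already unipotent, and $\rho^{-1}(\Gamma(3))\cap H$ is the desired finite-index subgroup. Alternatively, one can block-triangularise over $\overline{\mq}$, observe that the diagonal-block representations have traces lying in a finite set (sums of roots of unity of order bounded in terms of $n$), apply Burnside's theorem to conclude those images are finite, and take the kernel. Either route closes the gap; as written, your middle step does not.
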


\begin{definition}[\cite{BFHpolynomial}] \hfill
\label{BFHpolynomial}

\begin{enumerate}
  \item A {\em filtered graph of $\F{n}$} is a graph $\Gamma$ with fundamental group isomorphic to $\F{n}$ equipped with a filtration
$\emptyset = \Gamma_0 \subsetneq \Gamma_1 \subsetneq \cdots \subsetneq \Gamma_i \subsetneq \Gamma_{i+1} \subsetneq \cdots \subsetneq \Gamma_r = \Gamma$
where for each $i = 0,\cdots,r-1$, $\Gamma_{i+1}$ is the union of $\Gamma_i$ with a $1$-cell $e_i$.

 \item A homotopy equivalence $f$ of a filtered graph $\Gamma$ is {\em filtered} if it is a graph-map and for any edge $e_i$ of $\Gamma$, $i=1,\cdots r$, $f(e_{i}) = v_{i-1} e_{i} u_{i-1}$ where
$v_{i-1}, u_{i-1}$ are loops contained in $\Gamma_{i-1}$.
\end{enumerate}
\end{definition}

Obviously, the filtration given by Definition \ref{BFHpolynomial} induces an order on the $1$-cells of $\Gamma$, and thus a partial ordering on the $1$-cells in the universal covering. These order and partial ordering will be referred to as {\em the height} of the $1$-cells. The {\em height of a set of $1$-cells} is the maximal height of these cells.
If $\{f_1,\cdots,f_k\}$ is a set of $k$ filtered homotopy equivalences of $\Gamma$, a $1$-cell $e_j$ of $\Gamma$ (or any of its lifts in the universal covering)  is {\em $i$-topmost} if it appears only in $f_i(e_j)$ and in the image under $f_i$ of no other $1$-cell and is {\em topmost} if it is $i$-topmost for each $i \in \{1,\cdots,k\}$. 

\begin{definition}
\label{wb0}
A  filtered graph of $\F{n}$ is {\em well-built} if, for a chosen identification of its fundamental group with $\F{n}$ as given in Definition \ref{ddb},  each essential cell is higher than any $1$-cell in the unique simple loop with same $\F{n}$-label.

%For $1 \leq j \leq n$ and $1 \leq i \leq k$, the {\em $(i,j)$-multiplicity  of $(\mathcal F,\Gamma)$}, denoted by $M_{i,j}(\mathcal F,\Gamma)$, is the maximal absolute value of the powers of $x_j$ in the words $f_i(e)$, for $e \in E^+(\Gamma)$. %The {\em multiplicity $M(\mathcal F,\Gamma)$} of $(\mathcal F,\Gamma)$ is the maximum over $1 \leq j \leq n$ of the $j$-multiplicities of $(\mathcal F,\Gamma)$. 
\end{definition}

\begin{lemma}
\label{wb}
Any filtered graph admits a maximal tree, and set of essential cells, so that it is well-built.
\end{lemma}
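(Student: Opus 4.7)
The plan is to build the maximal tree $T$ and the set of essential $1$-cells \emph{greedily}, by a single sweep through the filtration $\Gamma_0 \subsetneq \Gamma_1 \subsetneq \cdots \subsetneq \Gamma_r = \Gamma$. The invariant to be maintained is that at stage $i$ the already-selected portion $T_i$ is a spanning forest of $\Gamma_i$; the incoming $1$-cell is either absorbed into $T_i$ or declared essential, according to whether or not it can be added without creating a cycle.

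Concretely, I set $T_0 := \emptyset$ and, given $T_i \subseteq \Gamma_i$, pass to $\Gamma_{i+1} = \Gamma_i \cup \{e_i\}$ by taking $T_{i+1} := T_i \cup \{e_i\}$ whenever the endpoints of $e_i$ lie in distinct components of $\Gamma_i$ (in particular whenever $e_i$ attaches a new vertex), and $T_{i+1} := T_i$ with $e_i$ declared essential otherwise. At termination, $T := T_r$ is a spanning forest of $\Gamma$ and, since $\Gamma$ is connected with $\pi_1(\Gamma) \cong \F{n}$, $T$ is in fact a maximal tree. After choosing orientations on the essential cells, Definition \ref{ddb} supplies the collapsing homotopy equivalence to $R_n$ and hence the $\F{n}$-labels.

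To check well-builtness, fix an essential cell $e$ of height $h$. By construction the endpoints of $e$ already shared a component of $T_h$, so the unique path in $T_h$ between them uses only $1$-cells of height strictly smaller than $h$. Since enlarging a tree never alters the path between two of its pre-existing vertices, this same path is the unique path in $T$ from $i(e)$ to $t(e)$. Thus the unique simple loop of $\Gamma$ whose $\F{n}$-label agrees with that of $e$, namely $e$ concatenated with this tree path, has all of its other $1$-cells of height $< h$, as required.

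The main difficulty is really bookkeeping: the filtration may produce disconnected intermediate stages $\Gamma_i$ and $1$-cells introducing new vertices, so that $T_i$ is genuinely only a spanning forest. The greedy invariant is robust to this, and one must verify at the end that exactly $n$ complementary $1$-cells remain, so that contracting $T$ turns $\Gamma$ into $R_n$ and the essential cells are in bijection with a free basis of $\F{n}$; this follows from the Euler-characteristic identity $|E(\Gamma)| - |V(\Gamma)| + 1 = n$.
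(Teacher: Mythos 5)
Your proof is correct and is essentially the paper's own argument: both sweep the $1$-cells in order of increasing height, greedily absorbing each into the growing spanning forest unless it closes a cycle, in which case it is declared essential. You merely make explicit the verification the paper leaves implicit, namely that the tree path joining the endpoints of an essential cell already exists in the forest built from strictly lower cells and is unchanged by later enlargements of the tree.
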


\begin{proof}
Assume the edges are indexed from the lowest to the highest. Starting with $e_1$, consider the maximal sequence of edges $e_1 < e_2 < \cdots < e_{l}$ whose union forms a forest, put them in the maximal tree and define the first following edge $e_{l+1}$ as an essential edge. Then iterate by substituting $e_1$ with $e_{l+2}$.  
\end{proof}

Observe that a filtered homotopy equivalence of a filtered graph $\Gamma$ of $\F{n}$ fixes each vertex of $\Gamma$. We assume fixed
the choice of a $\F{n}$-tree over $\Gamma$.
A filtered homotopy equivalence naturally defines in this way an outer automorphism of $\F{n}$.
The set of all filtered homotopy equivalences
up to homotopy relative to the vertices of $\Gamma$,
equipped with the composition, defines a group (\cite{BFHpolynomial}[Lemma 6.1]) that we denote by $\mathcal F$.
Any choice of $k$ filtered homotopy equivalences of a filtered graph
$\Gamma$ of $\F{n}$ defines a subgroup $\langle f_1,\cdots,f_r \rangle$ of $\mathcal F$ and thus a subgroup $\mathcal U$ of $\Out{\F{n}}$.

\begin{theorem}[\cite{BFHpolynomial}]
\label{BFH}
For any integers $n \geq 2$ and $k \geq 1$, for any rank $k$ unipotent free subgroup $\mathcal U$ of $\Out{\F{n}}$, there exists a a {\em BFH-representative $({\mathcal F},\Gamma)$ of $\mathcal U$} composed of a filtered graph
$\Gamma$ of $\F{n}$
and a family $\mathcal F$ of $k$ reduced filtered homotopy equivalences
of $\Gamma$ defining $\mathcal U$.
\end{theorem}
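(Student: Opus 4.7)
The statement being proved (Theorem \ref{BFH}) is the main structural result of Bestvina-Feighn-Handel's \emph{Kolchin-type theorem} for $\Out{\F{n}}$. Since we are quoting it, a full proof is beyond the scope of this paper, but the strategy is as follows.

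The plan is to proceed by a double induction: on the rank $n$ of the ambient free group and on the rank $k$ of $\mathcal U$. The starting observation is the linear Kolchin theorem: since every element of $\mathcal U$ acts unipotently on $H_1(\F{n};\mathbb Z) = \mathbb Z^n$, the whole group $\mathcal U$ preserves a common flag $0 = V_0 \subsetneq V_1 \subsetneq \cdots \subsetneq V_n = \mathbb Z^n$ and acts trivially on each successive quotient $V_i/V_{i-1}$. The goal is to topologically realize this flag as a filtration of a marked graph. For the base case $k=1$, one applies the relative train track technology for a single polynomially growing automorphism: a polynomially growing outer automorphism admits a representative with no exponentially growing strata, hence all strata are linear or fixed, and the filtration by strata is precisely the desired one after collapsing the exponential strata (which do not occur).

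The inductive step is the substantive part. Given $\mathcal U$ of rank $k$, pick a generator $\phi_k$ and apply the rank $k-1$ case to $\mathcal U' = \langle \phi_1,\cdots,\phi_{k-1} \rangle$ to obtain a BFH-representative $(\mathcal F', \Gamma')$. One then tries to realize $\phi_k$ as a filtered homotopy equivalence of the \emph{same} graph $\Gamma'$. In general this will require modifying $\Gamma'$: one blows up certain vertices of $\Gamma'$ along the directions fixed by $\phi_k$, subdivides, and folds to produce a new filtered graph $\Gamma$ carrying both a refined version of the old filtration and a topmost filtered representative of $\phi_k$. The unipotence hypothesis is essential here: it guarantees that every fixed direction in $\Gamma'$ lifts coherently, so that after blow-up each $\phi_j$ can be made filtered with respect to the new strata, with $f_j(e_i) = v_{i-1,j}\, e_i\, u_{i-1,j}$ and $v_{i-1,j}, u_{i-1,j} \subset \Gamma_{i-1}$.

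The main obstacle is ensuring simultaneous filteredness: one must verify that the blow-up and folding operations required to filter $\phi_k$ do not destroy the filteredness of $\phi_1,\dots,\phi_{k-1}$. This is achieved by controlling the combinatorics of the fixed point sets and making all modifications relative to $\Gamma_{i-1}$, so that lower strata remain untouched while upper strata are extended. Once filteredness is achieved at every height, reducedness of each $f_j$ is obtained by a final homotopy relative to the vertices, which by the filtered structure preserves the property that the image of each edge has the required form. This yields the desired BFH-representative $(\mathcal F, \Gamma)$.
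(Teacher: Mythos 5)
This statement is not proved in the paper at all: it is imported verbatim from Bestvina--Feighn--Handel's Kolchin-type theorem, and the paper's ``proof'' is the citation \cite{BFHpolynomial}. You correctly recognize this, so there is no real discrepancy of approach to adjudicate; an acknowledgment that the result is quoted would suffice here.

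That said, since you chose to sketch the external argument, be aware that your sketch does not reflect how Bestvina--Feighn--Handel actually prove the theorem, and the inductive scheme you propose is precisely the naive one that fails. Your base observations are fine: unipotence does give a common invariant flag on $H_1(\F{n};\mz)$ by the linear Kolchin theorem, and a single polynomially growing (unipotent) outer automorphism does admit a filtered (upper-triangular) relative train track representative. But the inductive step ``realize $\phi_k$ on the graph $\Gamma'$ already carrying $\phi_1,\dots,\phi_{k-1}$ by blowing up and folding, while preserving filteredness of the earlier maps'' is exactly the point where no direct argument is known: blow-ups and folds adapted to $\phi_k$ have no reason to respect the filtration adapted to $\mathcal U'$, and ``controlling the combinatorics of the fixed point sets'' is an assertion, not a proof. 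The actual BFH argument inducts on the rank $n$ of the ambient free group, not on the number of generators: for a finitely generated UPG subgroup $\mathcal U$ they produce a common proper free factor system (equivalently a common invariant simplicial tree), obtained via a limiting argument with very small actions on $\RM$-trees and bounded cancellation, and then pass to the induced UPG subgroups on the factors, which have smaller complexity. The simultaneous filtered representative is assembled at the end from this common invariant structure, rather than generator by generator. If you keep a sketch at all, it should indicate that the heart of the proof is the existence of this common invariant free factor system, since that is what makes simultaneity possible.
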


This concludes the reminders about Bestvina-Feighn-Handel theory. We now give the definition of the particular free subgroups of polynomially growing automorphisms we will be interested in:

\begin{definition}
\label{tied}

Let $(\mathcal F,\Gamma)$ be a well-built BFH-representative. Let $\{F_i\}_{i=1,\cdots,k}$ be reduced lifts of the filtered homotopy equivalences in $\mathcal F$ to the universal covering $\mathcal T$ of $\Gamma$. 

For any $1$-cell $E$ in $\mathcal T$ we denote by $E^i_{top}$ the unique $1$-cell with same label in $F_i(E)$. Let $Bt_i$ (resp. $Bt$) be the map on $\mathcal T$ which to an EoE-cell $E$ (see Definition \ref{ddb}) assigns all the $1$-cells $E^\prime$ such that $F_i(E^\prime)$ contains $E^i_{top}$ (resp.  the set of all $1$-cells $E^\prime$ such that for some $i \in \{1,\cdots,k\}$, $F_i(E^\prime)$ contains $E^i_{top}$).

\begin{enumerate}

\item The BFH-representative $(\mathcal F,\Gamma)$ is {\em one-sided} if, for any EoE-cell $E$, $Bt(E)$ is contained in a single side of $E$. 

\item The BFH-representative $(\mathcal F,\Gamma)$ is {\em tied} if for any EoE-cell $E$ there exists the {\em $E$-tied-set $I_E \subset \{1,\cdots,k\}$} such that 

\begin{itemize}
  \item $E$ is $j$-topmost for any $j \notin I_E$, 
  \item if $R$ denotes a geodesic from $E$ to $\partial \overline{Bt(E)}$ either no $F_i(R)$ contains $E^i_{top}$ or for each $i \in I_E$ $F_i(R)$ contains $E^i_{top}$.
\end{itemize}

\item A unipotent free subgroup of $\Out{\F{n}}$ is {\em tied} (resp. {\em one-sided}) if it admits a tied (resp. one-sided) BFH-representative. 

%A free subgroup of $\Out{\F{n}}$ is {\em tied} (resp. {\em one-sided}) if it admits a tied (resp. one-sided) unipotent subgroup.
 
 \end{enumerate}

\end{definition}

\begin{lemma}
\label{fondamental}
With the assumptions and notations of Definition \ref{tied}, assume that $(\mathcal F,\Gamma)$ is one-sided. If $E, X$ are two $1$-cells in $\mathcal T$ such that $F_i(X)$ contains $E^i_{top}$ then $X^i_{top}$ is on the other side of $E^i_{top}$ than $X$ w.r.t. $E$ if and only if the geodesic between $E$ and $X$ crosses an even number of cells in $Bt_i(E)$.
\end{lemma}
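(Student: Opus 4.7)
The plan is a parity argument.

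Form the edge-path $P$ in $\mathcal T$ from the endpoint $v'_E$ of $E$ farthest from $X$ to the endpoint $v'_X$ of $X$ farthest from $E$: it is the concatenation of $E$, the interior geodesic $\gamma$ between $E$ and $X$, and $X$. Since each filtered homotopy equivalence $f_i$ is reduced, the image $F_i(E')$ of every $1$-cell $E'$ is a reduced edge-path in $\mathcal T$, hence contains the specific $1$-cell $E^i_{top}$ at most once; moreover, by the very definition of $Bt_i(E)$, it contains $E^i_{top}$ if and only if $E' \in Bt_i(E)$. The concatenation $F_i(P) = F_i(E) \cdot F_i(\gamma) \cdot F_i(X)$ therefore contains $E^i_{top}$ exactly $m+2$ times, where $m$ is the number of interior cells of $\gamma$ lying in $Bt_i(E)$: the two extra occurrences come from $E$ itself (by the definition of $E^i_{top}$) and from $X$ (by the hypothesis $F_i(X) \ni E^i_{top}$).

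Tightening $F_i(P)$ to the tree-geodesic between $F_i(v'_E)$ and $F_i(v'_X)$ preserves the parity of this count, since each elementary cancellation removes an edge together with its inverse, i.e. two occurrences of the same $1$-cell. A tree-geodesic being a simple path, it contains $E^i_{top}$ either zero or one time; hence $F_i(v'_E)$ and $F_i(v'_X)$ lie on the same side of $E^i_{top}$ if and only if $m$ is even.

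It remains to translate this into the statement about $X$ and $X^i_{top}$. Writing the filtered decompositions $F_i(E) = V_E \cdot E^i_{top} \cdot U_E$ and $F_i(X) = V_X \cdot X^i_{top} \cdot U_X$, and noting that the unique copy of $E^i_{top}$ inside $F_i(X)$ lies either in $V_X$ or in $U_X$ (the label of $X^i_{top}$ being different from that of $E^i_{top}$), one locates each of $F_i(v'_E)$, $F_i(v'_X)$ and $X^i_{top}$ on a definite side of $E^i_{top}$ determined by the chosen orientations and by which of $V_X, U_X$ contains $E^i_{top}$. The main obstacle, and the step using the one-sidedness hypothesis in an essential way, is to combine these side-assignments with the position of $X$ itself: one-sidedness forces the whole of $Bt_i(E)$ to lie on a single fixed side of $E$, which rigidifies the relative placement of $(X, X^i_{top})$ with respect to $(F_i(v'_E), F_i(v'_X))$ uniformly in $X \in Bt_i(E)$. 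A short case analysis distinguishing whether the copy of $E^i_{top}$ in $F_i(X)$ lies in $V_X$ or in $U_X$ then establishes that ``$F_i(v'_E)$ and $F_i(v'_X)$ on the same side of $E^i_{top}$'' is equivalent to ``$X$ and $X^i_{top}$ on opposite sides of $E^i_{top}$'', which combined with the parity equivalence above yields the lemma.
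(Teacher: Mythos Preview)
Your parity argument in steps (1)--(3) is correct and pleasant: the count of occurrences of $E^i_{top}$ in $F_i(P)$ is $m+2$, tightening preserves parity, and you conclude that $F_i(v'_E)$ and $F_i(v'_X)$ lie on the same side of $E^i_{top}$ if and only if $m$ is even.

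The gap is step (4). What you have established concerns the \emph{images} $F_i(v'_E),\,F_i(v'_X)$, whereas the lemma compares the side of $X^i_{top}$ with the side of the \emph{original} cell $X$. Your case analysis on whether $E^i_{top}$ sits in $V_X$ or $U_X$ does pin down $s(X^i_{top})$ relative to $s(F_i(v'_X))$; but to finish you must also know $s(X)$, i.e.\ on which side of $E^i_{top}$ the vertices $v_X,v'_X$ themselves lie. Nothing in your argument controls this: the parity count only sees $F_i$-images of vertices, and one-sidedness only places $X$ on a fixed side of $E$, which says nothing a priori about its side of $E^i_{top}$ (the lift $F_i$ can displace $E$ arbitrarily far from $E^i_{top}$). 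Concretely, unwinding your two cases one finds that the desired equivalence reduces to ``$s(X)=s(F_i(v'_E))$ if and only if $E^i_{top}\in U_X$'', and this is precisely the substantive content of the lemma --- it is not a short bookkeeping check. So the ``short case analysis'' you defer is in fact the whole difficulty.

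The paper avoids this obstacle by arguing directly in the base case $m=0$ with the filtered (height) structure rather than by parity: from reducedness and $E^i_{top}\subset F_i(X)$ it obtains the containment $F_i(R)\subset F_i(X)$, hence $X$ is higher than every cell of $R$ and $X^i_{top}\notin F_i(R)\cup F_i(E)$. This places $X^i_{top}$ beyond $E^i_{top}$ along $F_i(X)$, on the side opposite to $X$, without ever having to compute $s(X)$ separately. The even case is then a straightforward iteration. Your global parity idea could perhaps be salvaged, but not without an argument of this height/containment type to bridge the original positions and the $F_i$-images.
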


\begin{proof}
Let $R$ be the unique geodesic between $E$ and $X$ and assume that it crosses no cell in $Bt_i(E)$. Since $F_i(X)$ contains $E^i_{top}$ and is reduced, $F_i(X)$ contains $F_i(R)$. It follows that $X$ is higher than any cell in $R$ so that $X^i_{top}$ does not belong to $F_i(R)$. It does not belong to $F_i(E)$ for the same reason. Since $R$ crosses no cell in $Bt_i(E)$ and $F_i$ is reduced, $F_i(X)$ ends on the other side of $E^i_{top}$. We so get that $X^i_{top}$ lies on the other side of $E^i_{top}$. The generalization to the case where $R$ crosses an even number of cells in $Bt_i(E)$ is straightforward.
\end{proof}

This gives the following

\begin{corollary}
\label{toutcapourca}
Let $(\mathcal F,\Gamma)$ be a one-sided BFH-representative and let $\{F_i\}_{i=1,\cdots,k}$ be reduced lifts of the filtered homotopy equivalences in $\mathcal F$ to the universal covering $\mathcal T$ of $\Gamma$. For $t \in \F{k}$, let $F_t \in \langle F_1,\cdots,F_k \rangle$ denote the map representing $\sigma(t)$ and for any $1$-cell $E$ let $E^t_{top}$ the unique $1$-cell with same label as $E$ in $F_t(E)$. 

Let $E$ be any EoE-cell in $\mathcal T$ and let $R$ be a geodesic segment from $E$ to $\partial \overline{Bt(E)}$ crossing at least one cell in $Bt(E)$. Then no subgeodesic of $F_t(R)$ in the opposite side of $E^t_{top}$ than $R$ w.r.t. $E$ contains an element of $Bt(E^t_{top})$.
\end{corollary}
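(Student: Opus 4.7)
The plan is to derive the statement from the one-sidedness hypothesis applied to $E^t_{top}$ itself, together with an identification of which side of $E^t_{top}$ contains $Bt(E^t_{top})$.

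First I would observe that $R$ is contained in $S_E\cup\{E\}$, where $S_E$ is the side of $E$ containing $Bt(E)$: indeed, $R$ starts at $E$, it crosses at least one cell of $Bt(E)$ by hypothesis, and one-sidedness forces all of $Bt(E)$ into a single side. Consequently the ``opposite side of $E^t_{top}$ than $R$ w.r.t.\ $E$'' in the statement is simply the side of $E^t_{top}$ that corresponds, under the canonical label-preserving identification of sides, to the side of $E$ opposite to $S_E$.

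Next, applying one-sidedness to the cell $E^t_{top}$, which has the same $\F{n}$-label as $E$ and is therefore also an EoE-cell, gives that $Bt(E^t_{top})$ lies entirely in a single side of $E^t_{top}$. The crucial step is to identify this side as the label-image of $S_E$. For this I would exploit the twisted equivariance of each lift, namely $F_i(g\cdot x)=\alpha_i(g)\cdot F_i(x)$ with $\alpha_i\in\Aut{\F{n}}$ the induced automorphism, to deduce $Bt_i(g_tE)=\alpha_i(g_t)\cdot Bt_i(E)$ (writing $E^t_{top}=g_t\cdot E$). Combining this with one-sidedness at $E^t_{top}$, which forces all the $i$-dependent translates to land in one common side of $E^t_{top}$, should yield the desired identification.

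Once this identification is established the conclusion is immediate: any cell of $F_t(R)$ in the opposite side of $E^t_{top}$ is on the opposite side of $E^t_{top}$ from $Bt(E^t_{top})$, and so cannot belong to $Bt(E^t_{top})$; a fortiori no geodesic subpath of $F_t(R)$ in that side contains such a cell. The main obstacle is the second step: since different generators $F_i$ induce distinct automorphisms $\alpha_i$, the set $Bt(E^t_{top})$ is a union of distinct $\alpha_i(g_t)$-translates of the $Bt_i(E)$'s rather than a single $\F{n}$-translate, and verifying that all these pieces really do land on the expected side requires careful bookkeeping. Should a direct verification prove elusive, an induction on the word-length of $t$ in $\F{k}$, with Lemma \ref{fondamental} handling the single-generator base case, is a natural fallback that propagates the side information through each successive composition factor.
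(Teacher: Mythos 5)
Your overall strategy is sound and in fact fills a gap the paper leaves open: the paper states this corollary with no proof at all, presenting it as an immediate consequence of Lemma \ref{fondamental}, whereas you argue directly from one-sidedness at the translated cell $E^t_{top}$ together with equivariance of the $Bt$ construction. Both routes work; yours has the advantage of not needing the parity bookkeeping of Lemma \ref{fondamental} in the main line (you only invoke it as a fallback base case), and it makes explicit the point the paper itself records only informally just after the corollary, namely that the tied/one-sided properties are inherited by all left $\F{n}$-translates of $E$.

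There is, however, a concrete error in your key computation, and it is worth fixing because it dissolves the ``main obstacle'' you describe. You claim $Bt_i(g\cdot E)=\alpha_i(g)\cdot Bt_i(E)$. The correct formula is $Bt_i(g\cdot E)=g\cdot Bt_i(E)$: writing $E'=gE''$, the twisted equivariance $F_i(gE'')=\alpha_i(g)F_i(E'')$ gives that $F_i(E')$ contains $(gE)^i_{top}=\alpha_i(g)E^i_{top}$ if and only if $F_i(E'')$ contains $E^i_{top}$, i.e.\ the twist by $\alpha_i$ cancels because $Bt_i$ is a preimage-type set: the translation acts on the source, not the target. Consequently $Bt(E^t_{top})=g_t\cdot Bt(E)$ is a \emph{single} $\F{n}$-translate (with $E^t_{top}=g_t\cdot E$), not a union of $i$-dependent translates, and since left translations preserve orientations and hence the left/right labelling of sides, $Bt(E^t_{top})$ sits in the side of $E^t_{top}$ corresponding to $S_E$ with no further case analysis. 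With that correction your second step is immediate, the ``careful bookkeeping'' you worry about is unnecessary, and the induction on the word-length of $t$ is not needed; the rest of your argument then goes through as written.
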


Of course, if the properties required in Definition \ref{tied} are satisfied for the EoE-cell $E$ in a fundamental region for the action of $\F{n}$ then they are satisfied for all the left $\F{n}$-translates of $E$. Farther in the paper, we introduce the maps $Bot_i$ and $Bot$ very similar to $Bt_i$ and $Bt$, in another, equivalent, way.

\section{Free by free groups, suspensions}
\label{suspensions}

The aim of this section is to describe the universal covering of the {\em suspension}  (see \ref{def de suspension} below) of a BFH-representative $(\mathcal F,\Gamma)$ and its universal covering termed {\em mapping-cylinder of $(\mathcal F,\Gamma)$}. 

\begin{definition}
Let $\F{n}, \F{k}$ be the rank $n$ and rank $k$ free groups and let $\sigma \colon \F{k} \hookrightarrow \Out{\F{n}}$ be a monomorphism.

The {\em suspension of $\F{n}$ by $\F{k}$ over $\sigma$}, denoted $G^\sigma_{n,k}$, is the group $$G^\sigma_{n,k} := \F{n} \rtimes_\sigma \F{k}.$$

The normal subgroup $\F{n}$ is called the {\em horizontal subgroup} of $G^\sigma_{n,k}$ whereas the subgroup $\F{k}$ is the {\em vertical subgroup}. 
\end{definition}

\begin{definition}
\label{def de suspension}
Let $\Gamma$ be a graph and let $\mathcal F = \{f_1,\cdots,f_k\}$ be a family of continuous maps of $\Gamma$.

The {\em suspension of $\Gamma$ by $\mathcal F$}, denoted by $K_{(\mathcal F,\Gamma)}$, is the $2$-complex $$K_{(\mathcal F,\Gamma)} := (\Gamma \times T^k) / ((x,v_j) \sim (f_j(x),v_0)),$$

The {\em mapping-cylinder of $\Gamma$ under $\mathcal F$} is the universal covering $\pi_K \colon {\mathcal K}_{({\mathcal F},\Gamma)} \rightarrow K_{(\mathcal F,\Gamma)}$ of the suspension of $\Gamma$ by $\mathcal F$.

The {\em $j$-cylinder over a subgraph $U$ of $\Gamma$} is the $2$-complex $$((U \times [0,1]) \sqcup f_j(U)) / \sim \mbox{ with } (x,1) \in (U \times [0,1])  \sim f_j(x) \in f_j(U)$$
\end{definition}

Assume that the maps $f_i$ in Definition \ref{def de suspension} are homotopy equivalences, and thus induce outer automorphisms $\alpha_i$ on the fundamental group of $\Gamma$. If the latter is $\F{n}$, then the fundamental group of the suspension is the semi-direct product $\F{n} \rtimes_\sigma \F{k}$ where the image of the morphism $\sigma \colon \F{k} \hookrightarrow \Out{\F{n}}$ is the subgroup generated by the $\alpha_i$. In particular:

\begin{lemma}
\label{suspension}
Let $\sigma \colon \F{k} \hookrightarrow \Out{\F{n}}$ be a monomorphism such that $\sigma(\F{k})$ is a unipotent subgroup. Let $({\mathcal F},\Gamma)$ be a BFH-representative of $\sigma(\F{k})$. Then the suspension of $\F{n}$ by $\F{k}$ over $\sigma$ is the fundamental group of the suspension of $\Gamma$ by $\mathcal F$: 

$$G^\sigma_{n,k}  = \pi_1(K_{(\mathcal F,\Gamma)}).$$ 
\end{lemma}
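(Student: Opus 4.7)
The plan is to apply the Seifert--van Kampen theorem, using a graph-of-spaces decomposition of $K_{(\mathcal F,\Gamma)}$. First I would unpack the definition: since $T^k$ deformation retracts onto $v_0$, the space $\Gamma \times T^k$ is simply the wedge along $\Gamma \times \{v_0\}$ of the $k$ cylinders $\Gamma \times e_j$. The identification $(x,v_j) \sim (f_j(x), v_0)$ folds the tip $\Gamma \times \{v_j\}$ of the $j$-th arm back onto $\Gamma \times \{v_0\}$ via $f_j$. Hence $K_{(\mathcal F,\Gamma)}$ is obtained from a single copy of $\Gamma$ by attaching, for each $j = 1, \ldots, k$, a mapping cylinder $C_j = \Gamma \times [0,1]$ whose bottom $\Gamma \times \{0\}$ is glued to $\Gamma$ by the identity and whose top $\Gamma \times \{1\}$ is glued to $\Gamma$ by $f_j$.

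This exhibits $K_{(\mathcal F,\Gamma)}$ as the total space of a graph of spaces over the rose $R_k$: one vertex space equal to $\Gamma$ (with $\pi_1(\Gamma) = \F{n}$) and $k$ edge spaces, each a copy of $\Gamma$, the two boundary inclusions into the vertex space being $\mathrm{id}$ and $f_j$. Since every filtered homotopy equivalence fixes the vertices of $\Gamma$ (as recalled before Theorem \ref{BFH}), a chosen basepoint $v_0 \in \Gamma$ is preserved by all the $f_j$, and the loops $t_j := \{v_0\} \times e_j$ close up at $v_0$ after the identification at $v_j$.

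From here I would invoke the standard Bass--Serre / Seifert--van Kampen description of such a multiple mapping torus. It produces the presentation
$$\pi_1(K_{(\mathcal F,\Gamma)}) \;=\; \big\langle \F{n},\, t_1,\ldots,t_k \;\big|\; t_j \gamma t_j^{-1} = f_{j\ast}(\gamma) \text{ for all } \gamma \in \F{n},\ j = 1,\ldots,k \big\rangle,$$
which is exactly the semi-direct product $\F{n} \rtimes_{\sigma'} \F{k}$, where $\sigma' \colon \F{k} \to \Aut{\F{n}}$ sends $t_j$ to $f_{j\ast}$.

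Finally, by definition of a BFH-representative of $\sigma(\F{k})$, the outer class of each $\sigma'(t_j)$ in $\Out{\F{n}}$ equals $\sigma(t_j)$. As already noted in the paper, the isomorphism class of a semi-direct product $\F{n} \rtimes \F{k}$ depends only on the class of the action in $\Out{\F{n}}$, so $\pi_1(K_{(\mathcal F,\Gamma)}) \cong \F{n} \rtimes_\sigma \F{k} = G^\sigma_{n,k}$. There is no genuine obstacle here: the argument is a routine mapping-torus computation. The only point requiring care is the bookkeeping of conventions (the base vertex, the orientation of the loops $t_j$, and the direction in which the cylinders are glued) so that the induced action of $\F{k}$ on $\F{n}$ comes out as $\sigma$ and not its inverse.
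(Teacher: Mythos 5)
Your proof is correct and coincides with what the paper intends: the paper offers no written proof, merely asserting the lemma as an instance of the general fact (stated in the paragraph preceding it) that the suspension of $\Gamma$ by homotopy equivalences has fundamental group the corresponding semi-direct product, and your graph-of-spaces/Seifert--van Kampen computation is exactly the standard justification of that fact. The two points you flag — that the $f_j$ fix the basepoint so induce genuine automorphisms, and that the isomorphism type of $\F{n}\rtimes\F{k}$ depends only on the outer class of the action — are precisely the ingredients the paper relies on implicitly.
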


In the following lemma, we describe the cells of the suspension complex. This lemma is stated under the assumptions of Lemma \ref{suspension}; what we only really need is the assumption that the maps $f_i$ with which we build the suspension send vertices to vertices. This assumption is satisfied since the filtered homotopy equivalences fix each vertex.

\begin{lemma}
\label{a dire}

With the notations and assumptions of Lemma \ref{suspension}, set ${\mathcal F} = \{f_i\}_{i=1,\cdots,k}$:

\begin{itemize}
  \item The $1$-skeleton of $K_{(\mathcal F,\Gamma)}$ is the union of $\Gamma \times \{v_0\}$ with the union, over the vertices $v$ of $\Gamma$, of the $\{v\} \times R_k$.
 
   \item If $\Gamma$ has $l$ $1$-cells then there are exactly $k.l$ $2$-cells ${(c_{i,j})}^{j=1,\cdots,k}_{i=1,\cdots,l}$ in $K_{(\mathcal F,\Gamma)}$. The boundary $\partial c_{i,j}$ of the p.o. $2$-cell $c_{i,j}$ reads the edge-path $e_i t_j {f_j(e_i)}^{-1} t^{-1}_j$ in the labels of $E^+(R_k) \cup E^+(\Gamma)$. In particular,  $\partial c_{i,j}$ contains exactly two $1$-cells in the union, over the vertices $v$ of $\Gamma$, of the $\{v\} \times R_k$, and one $1$-cell in $\Gamma \times \{v_0\}$ which connects the initial vertices of the associated edges in the copies of $E^+(R_k)$.
 \end{itemize}
 \end{lemma}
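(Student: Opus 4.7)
My plan is to unpack Definition \ref{def de suspension} directly and track what the quotient relation $(x, v_j) \sim (f_j(x), v_0)$ does cell by cell to the product $\Gamma \times T^k$. I will treat the two bullets in turn.

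For the $1$-skeleton, I would start by recalling that $T^k$ has a central vertex $v_0$, leaves $v_1, \dots, v_k$, and edges $t_j$ going from $v_0$ to $v_j$. The $1$-skeleton of $\Gamma \times T^k$ then splits as the $k{+}1$ horizontal copies $\Gamma \times \{v_j\}$ together with the vertical edges $\{v\} \times t_j$, one per pair $(v, j)$ with $v$ a vertex of $\Gamma$ and $j \in \{1, \dots, k\}$. The quotient identifies each horizontal copy $\Gamma \times \{v_j\}$, $j \geq 1$, with $\Gamma \times \{v_0\}$ via $f_j$, so only $\Gamma \times \{v_0\}$ survives among the horizontal pieces. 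The key observation, flagged in the remark preceding the lemma, is that filtered homotopy equivalences fix every vertex of $\Gamma$. Hence the top endpoint $(v, v_j)$ of the vertical edge $\{v\} \times t_j$ is identified with $(f_j(v), v_0) = (v, v_0)$, so the edge becomes a loop based at $(v, v_0)$. Gathering the $k$ resulting loops at $v$ assembles the claimed copy of $R_k$ glued to $\Gamma \times \{v_0\}$ at $v$.

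For the $2$-cells, the plan is to count them and then compute the attaching map of each. The open $2$-cells of $\Gamma \times T^k$ are exactly the products $e_i \times t_j$ with $e_i$ a $1$-cell of $\Gamma$ and $j \in \{1, \dots, k\}$, which gives $k\,l$ squares. Since the relation $(x, v_j) \sim (f_j(x), v_0)$ only identifies points on horizontal slices $\Gamma \times \{v_j\}$, which lie in the $1$-skeleton of $\Gamma \times T^k$, no open $2$-cell is glued to another or to itself, so $K_{(\mathcal F,\Gamma)}$ carries exactly $k\,l$ distinct $2$-cells $c_{i,j}$. To obtain the boundary word, I would traverse the boundary of $e_i \times t_j$ in the order $e_i \times \{v_0\}$, $\{t(e_i)\} \times t_j$, $(e_i \times \{v_j\})^{-1}$, $(\{i(e_i)\} \times t_j)^{-1}$, and read off what each side becomes after passing to the quotient: the first remains the $1$-cell $e_i$ in $\Gamma \times \{v_0\}$; the second becomes the $t_j$-petal of the rose at the vertex $t(e_i)$; the third becomes $f_j(e_i)^{-1}$ via the identification $(e_i, v_j) \sim (f_j(e_i), v_0)$; and the fourth becomes the inverse of the $t_j$-petal of the rose at $i(e_i)$. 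This yields the edge-path $e_i \, t_j \, f_j(e_i)^{-1} \, t_j^{-1}$, with the two rose-edges based respectively at $t(e_i)$ and $i(e_i)$, linked by the single $1$-cell $e_i$ of $\Gamma \times \{v_0\}$, which is exactly the stated description.

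The argument is essentially a careful bookkeeping of the quotient, so I do not expect any genuine obstacle. The only place where a hypothesis is used non-trivially is to guarantee that vertical edges close up into loops rather than paths; this relies on $f_j$ fixing each vertex, a property of filtered homotopy equivalences recalled in the paragraph preceding the lemma and the sole reason why the rose $R_k$ (rather than, say, a path or a more complicated graph) appears at each vertex of $\Gamma$.
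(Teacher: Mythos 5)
Your proof is correct: the paper gives no proof of this lemma at all, treating it as an immediate unpacking of Definition \ref{def de suspension}, and your cell-by-cell bookkeeping of the quotient is exactly the argument left implicit. You also correctly isolate the one substantive input --- that the $f_j$ fix vertices, so the vertical edges close up into the petals of $R_k$ --- which is precisely the point the paper flags in the paragraph preceding the lemma.
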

 
 Lemma \ref{a dire} allows us to establish the following definitions:
 
 \begin{definition}
 With the notations and assumptions of Lemma \ref{a dire}:
 \begin{itemize}
 
   \item The $1$-cells in $\Gamma \hookrightarrow K_{(\mathcal F,\Gamma)}$ are termed {\em horizontal $1$-cells}. The $1$-cells in the copies of $R_k$ in $K_{(\mathcal F,\Gamma)}$ are termed {\em vertical $1$-cells}. The $2$-cells are called {\em squares}.
   
   Let $c$ be a p.o. square whose boundary reads $e_i t_j {f_j(e_i)}^{-1} t^{-1}_j$ in the labels of $E^+(R_k) \cup E^+(\Gamma)$.

   \item The edge $e_i$ is the {\em bottom} of $c$. The union of the edges in ${f_j(e_i)}^{-1}$ is the {\em top} of $c$.

 \item The edges $t_j$ and $t^{-1}_j$ in $\partial c$, termed {\em $t_j$-cells}, form the {\em vertical boundary} of $c$, $c$ {\em has vertical $j$-boundary} and $c$ is a {\em $j$-square}. 
 
 %\item The {\em boundary of a $i$-cylinder over an edge-path} $p$ in $\Gamma$ consists of the vertical $1$-cells over the endpoints of $p$, termed the {\em vertical boundary of  the cylinder}, together with the union of the tops (the {\em top of the cylinder}) and bottoms of $2$-cells over edges in $p$.
 \end{itemize}
 
 All the above terminology lifts to the mapping-cylinder $\pi_K \colon {\mathcal K}_{({\mathcal F},\Gamma)} \rightarrow K_{(\mathcal F,\Gamma)}$ of $({\mathcal F},\Gamma)$.
\end{definition}

The following lemma is a straightforward consequence of Theorem \ref{BFH} and item (3) of Lemma \ref{orbit-map}:

\begin{lemma}
\label{a savoir}
Let $\sigma \colon \F{k} \hookrightarrow \Out{\F{n}}$ be a monomorphism such that $\sigma(\F{k})$ is a unipotent subgroup. Let $({\mathcal F},\Gamma)$ be a BFH-representative of $\sigma(\F{k})$ and set ${\mathcal F} = \{f_i\}_{i=1,\cdots,k}$. Let $E$ be any horizontal $1$-cell of ${\mathcal K}_{({\mathcal F},\Gamma)}$. 

\begin{enumerate}
  \item Any square $C$ of  ${\mathcal K}_{({\mathcal F},\Gamma)}$ which admits $E$ as bottom contains exactly one $1$-cell in its top which has same $\Gamma$-label as $E$.

 \item For any $i \in \{1,\cdots,k\}$ there is a unique $i$-square $C$ of ${\mathcal K}_{({\mathcal F},\Gamma)}$ whose bottom is a $1$-cell with same $\Gamma$-label as $E$.

 \item The number of $i$-squares which contain $E$ in their top is equal to the number of times the label of $E$ appears in the union of the images of the edges of $\Gamma$ under $f_i \in \mathcal F$. Two distinct such $i$-squares may share a same label for their bottom.

 \end{enumerate}
 
 \end{lemma}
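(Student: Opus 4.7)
The plan is to read off each of the three assertions directly from the cellular description of the suspension given in Lemma \ref{a dire}, transported to the universal cover via $\pi_K$, combined with the filtered structure of $(\mathcal F,\Gamma)$ (Definition \ref{BFHpolynomial}). The central combinatorial input is the filtered identity $f_j(e_{i'})=v_{i'-1}\,e_{i'}\,u_{i'-1}$ with $v_{i'-1},u_{i'-1}$ loops in $\Gamma_{i'-1}$: since every edge of $\Gamma_{i'-1}$ is strictly lower than $e_{i'}$, none of them shares the $\Gamma$-label of $e_{i'}$, so the displayed middle $e_{i'}$ is the \emph{only} occurrence of that label inside the edge-path $f_j(e_{i'})$.

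Granting this, Claim (1) follows immediately: if $E$ has $\Gamma$-label $e=e_{i'}$, then any square $C$ of ${\mathcal K}_{({\mathcal F},\Gamma)}$ with bottom $E$ projects to some $c_{i',j}$ whose top edge-path is $f_j(e_{i'})^{-1}$, and the identity above isolates a unique $1$-cell of that top with label $e$. For Claim (2), Lemma \ref{a dire} ensures that in $K_{({\mathcal F},\Gamma)}$ there is a unique p.o.\ square with bottom $e_{i'}$ and vertical $i$-boundary, namely $c_{i',i}$; I would then invoke standard uniqueness of lifts through the covering $\pi_K$, with the prescribed lifting datum ``the bottom is $E$'', to deduce the statement in ${\mathcal K}_{({\mathcal F},\Gamma)}$.

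Claim (3) uses the same lifting principle applied in the reverse direction: each occurrence of the label $e$ inside one of the edge-paths $f_i(e_1),\ldots,f_i(e_l)$ selects a distinguished $1$-cell in the top of the corresponding $c_{i',i}$ in $K_{({\mathcal F},\Gamma)}$, and requiring that the lift of this $1$-cell be exactly $E$ pins down a unique $i$-square of ${\mathcal K}_{({\mathcal F},\Gamma)}$ having $E$ in its top; conversely every such $i$-square arises this way. The resulting bijection gives the announced count, and the possibility that $f_i(e_{i'})$ contains the letter $e$ more than once explains the final remark that distinct $i$-squares may share a bottom $\Gamma$-label. The only thing to watch, rather than any genuine obstacle, is the consistent distinction between a $1$-cell of ${\mathcal K}_{({\mathcal F},\Gamma)}$ and its $\Gamma$-label, and the care needed to feed the lifting argument the correct boundary datum: the bottom edge for (1) and (2), and a specified top $1$-cell at a specified position for (3).
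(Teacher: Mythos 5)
Your treatment of items (1) and (3) is exactly what the paper intends: there is no written proof in the text (the lemma is declared ``a straightforward consequence of Theorem \ref{BFH} and item (3) of Lemma \ref{orbit-map}''), and the content you supply --- the filtered identity $f_j(e_{i'})=v_{i'-1}\,e_{i'}\,u_{i'-1}$ isolating a single label-preserving occurrence, combined with lifting squares through $\pi_K$ --- is precisely what that citation is meant to encode, including your explanation of why two distinct $i$-squares containing $E$ in their top may share a bottom label.

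The one point where I would push back is item (2). With the lifting datum ``the bottom is $E$'' you prove that there is a unique $i$-square with bottom $E$; that statement is true, but it is a triviality of the cylinder structure (it uses neither Theorem \ref{BFH} nor the filtration) and it would make the notation $E^i_{bot}$ of Definition \ref{a savoir 2} redundant, since the bottom of that square is $E$ itself. The way $E^i_{bot}$ and the maps $Bot_i$ are used afterwards (the block-lifts ${\mathcal L}_{l,E^l_{bot}}$, the sets $Bot_i(E^i_{top})$ which contain $E$, etc.) forces the intended reading: among the $i$-squares of item (3), i.e.\ those containing $E$ in their \emph{top}, exactly one has a bottom with the same $\Gamma$-label as $E$, and $E^i_{bot}$ is that bottom --- the preimage of $E$ under the label-preserving map $E'\mapsto (E')^i_{top}$. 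Proving this version needs your filtered identity (so that when $E'$ has the same label as $E$, the only occurrence of that label in $F_i(E')$ is $(E')^i_{top}$) together with one ingredient you did not state: the map $E'\mapsto (E')^i_{top}$ is a bijection on the set of $1$-cells carrying a fixed label. This holds because it is realized by the left-translations ${\mathcal L}_{i,E'}$, equivalently because $F_i$ represents an automorphism: writing $F_i(g\cdot\widetilde{e})=\alpha_i(g)\,F_i(\widetilde{e})$ one gets $(g\cdot\widetilde{e})^i_{top}=\alpha_i(g)h\cdot\widetilde{e}$ for a fixed $h\in\F{n}$, and $g\mapsto\alpha_i(g)h$ is bijective. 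With that added, existence and uniqueness of $E^i_{bot}$ follow and the rest of your write-up stands.
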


\begin{definition}
\label{a savoir 2}
With the notations and assumptions of Lemma \ref{a savoir}:

\begin{enumerate}

 \item The $1$-cell given by Item $(1)$ of Lemma \ref{a savoir} will be denoted by $E^i_{top}$ if $C$ is a $i$-square. The $1$-cell given by Item $(2)$ of Lemma \ref{a savoir}  will be denoted by $E^i_{bot}$ if $C$ is a $i$-square.
 
 \item  We denote by $Bot_i$ the map which to a given $1$-cell $E$ assigns the union of all the bottoms of the $i$-squares given by Item $(3)$ of Lemma \ref{a savoir}, and by $Bot^*_i$ the map defined by $Bot^*_i(E) = Bot_i(E) \setminus \{E\}$. 
  
  \item The {\em $(i,E)$-lift} is the left-translation ${\mathcal L}_{i,E} \colon E \mapsto E^i_{top}$ by the $G^\sigma_{n,k}$-element which sends $E$ to $E^i_{top}$.

 \end{enumerate}

\end{definition}

 By Lemma \ref{suspension}, the suspension group $G^\sigma_{n,k}$ is the group of deck-transformations of the universal covering $\pi_K \colon {\mathcal K}_{({\mathcal F},\Gamma)} \rightarrow K_{(\mathcal F,\Gamma)}$: 
 %if $v$ (resp. $e$, resp. $c$) denotes a $0$-cell (resp. $1$-cell, resp. $2$-cell) of $K_{(\mathcal F,\Gamma)}$  then ${(V^g)}_{g \in G^\sigma_{n,k}}$ (resp. ${(E^g)}_{g \in G^\sigma_{n,k}}$, ${(C^g)}_{g \in G^\sigma_{n,k}}$) with $V^g = g.V^e$ (resp. $E^g = g.E^e$, $C^g = g.C^e$) denote the $0$-cells (resp. $1$-cells, resp. $2$-cells) of ${\mathcal K}_{({\mathcal F},\Gamma)}$. 
there is a bijective correspondance between $G^\sigma_{n,k}$ and each orbit of $i$-cells. As for graphs, we fix a lift $\widetilde{T}_e$ of a maximal tree $T$ in $\Gamma$ to get an identification of $G^\sigma_{n,k}$ with each $G^\sigma_{n,k} . v$, $v \in V(\widetilde{T}_e)$. 

\begin{lemma}

\label{orbit-map}

With the notations and assumptions of Lemmas \ref{suspension} and \ref{a dire},  we denote by $F_j$ the lift of $f_j$ to $\mathcal T$ (the universal covering of $\Gamma$) that is $F_j \circ \pi_\Gamma = \pi_\Gamma \circ f_j$ and by $\widetilde{T}_e$ the chosen lift of the maximal tree in $\Gamma$.

Then there is a surjective continuous map $\pi^\infty_k \colon  {\mathcal K}_{({\mathcal F},\Gamma)} \rightarrow T^\infty_k$, called the {\em orbit-map}, of  ${\mathcal K}_{({\mathcal F},\Gamma)}$,  onto the Cayley graph $T^\infty_k$ of the vertical subgroup $F_k$ which satisfies the following properties:

\begin{itemize}
  \item The pre-image of each vertex $w \in \F{k}$ is a copy ${\mathcal T}_w$ of $\mathcal T$ with, by convention, ${\mathcal T}_e$ the lift containing $\widetilde{T}_e$ (which corresponds to $e \in G^\sigma_{n,k}$). Its $1$-cells are horizontal $1$-cells.
  
    \item The pre-image ${\mathcal K}_{w,wt_j}$ of any closed edge from $w$ to $wt_j$ is homeomorphic to the $j$-cylinder over $\mathcal T$. The $1$-cells in the interior of ${\mathcal K}_{w,wt_j}$, which project under $\pi_k$ to the $1$-cells of $T^\infty_k$ are vertical $1$-cells of ${\mathcal K}_{({\mathcal F},\Gamma)}$. The closure of ${\mathcal K}_{({\mathcal F},\Gamma)} \setminus {\mathcal K}_{w,wt_j}$ consists of exactly two connected components, whose union contains all the vertices of ${\mathcal K}_{({\mathcal F},\Gamma)}$.
  
  \item  The boundary of any p.o. square $C_{i,j} \in \pi^{-1}_K(c_{i,j})$ (see Lemma \ref{a dire}) in ${\mathcal K}_{w,wt_j} \subset {\mathcal K}_{({\mathcal F},\Gamma)}$ reads an edge-path of the form $E_i t_j F_j(E^{-1}_i) t^{-1}_j$ in the labels of $E^+(\mathcal T) \cup E^+(R_k)$.

 \end{itemize}
 
 Moreover:
 
 \begin{enumerate}
   \item The orbit of a vertex under the right-action of the vertical subgroup $\F{k}$ of $G^\sigma_{n,k}$ is the lift of $T^\infty_k$ under the orbit-map, passing through this vertex.
   \item For any $w, w^\prime \in \F{k}$, $w.{\mathcal T}_{w^\prime} = {\mathcal T}_{ww^\prime}$ and ${\mathcal T}_{w^\prime} .w = {\mathcal T}_{w^\prime w}$ are free left- and right-actions.
   \item For any $w \in \F{k}$ and cylinder ${\mathcal K}_{w^\prime, w^\prime t_j}$: $w.{\mathcal K}_{w^\prime, w^\prime t_j} = {\mathcal K}_{ww^\prime, ww^\prime t_j}$ is a free left-action.
   \item The stabilizer, for the left and right actions, of both ${\mathcal T}_w$ and ${\mathcal K}_{w^\prime, w^\prime t_j}$ is the horizontal subgroup.
 \end{enumerate}
\end{lemma}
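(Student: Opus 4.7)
The plan is to construct $\pi^\infty_k$ by first building a natural projection $p \colon K_{(\mathcal F,\Gamma)} \to R_k$ that collapses the horizontal copy $\Gamma \times \{v_0\}$ to the base vertex of $R_k$, and sends each vertical edge $\{v\} \times t_j$ identically to the $j$-th petal. This is well-defined on the quotient $K_{(\mathcal F,\Gamma)} = (\Gamma \times T^k)/\sim$ because the identifications $(x,v_j) \sim (f_j(x),v_0)$ both project to the common base vertex of $R_k$; continuity and surjectivity are immediate. By Lemma \ref{suspension} and the standard description of $\pi_1(R_k) = \F{k}$, the map on fundamental groups $p_\ast \colon G^\sigma_{n,k} \to \F{k}$ is precisely the quotient by the horizontal subgroup $\F{n}$. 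Lifting $p$ to the universal covers yields $\pi^\infty_k \colon {\mathcal K}_{({\mathcal F},\Gamma)} \to T^\infty_k$.

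The three bulleted properties are then read directly from the fibers of $p$: since $p^{-1}(v_0) = \Gamma$, the preimage under $\pi^\infty_k$ of any vertex $w \in \F{k}$ of $T^\infty_k$ is a connected lift ${\mathcal T}_w$ of $\Gamma$, that is, a copy of $\mathcal T$; we normalize by declaring ${\mathcal T}_e$ to be the lift containing the fixed $\widetilde{T}_e$. Similarly, the preimage of each open petal of $R_k$ under $p$ is (the interior of) a $j$-cylinder over $\Gamma$, so the preimage under $\pi^\infty_k$ of any closed edge $(w, w t_j)$ of $T^\infty_k$ is homeomorphic to the $j$-cylinder over $\mathcal T$. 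That removing such a cylinder disconnects ${\mathcal K}_{({\mathcal F},\Gamma)}$ into exactly two components follows because its $\pi^\infty_k$-image is a single edge of the tree $T^\infty_k$ whose removal disconnects $T^\infty_k$ into two half-trees, while the horizontal sheets are connected. For the boundary of a square $C_{i,j} \in \pi^{-1}_K(c_{i,j})$, lift the boundary edge-path $e_i t_j f_j(e_i)^{-1} t_j^{-1}$ from $K_{(\mathcal F,\Gamma)}$ (Lemma \ref{a dire}) starting at the initial vertex of $E_i$; the lift of $f_j$ that closes up this edge-path is by definition the reduced lift $F_j$ of $f_j$ satisfying $F_j \circ \pi_\Gamma = \pi_\Gamma \circ f_j$ and fixing the chosen vertex, giving the announced reading $E_i t_j F_j(E_i)^{-1} t_j^{-1}$.

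For items (1)--(4) we use that $G^\sigma_{n,k}$ acts freely by deck transformations on ${\mathcal K}_{({\mathcal F},\Gamma)}$ and that $\pi^\infty_k$ is equivariant with respect to the quotient $p_\ast$. Under the identification of $G^\sigma_{n,k}$ with the $G^\sigma_{n,k}$-orbit of the basepoint in $\widetilde{T}_e$, the right action of the vertical subgroup $\F{k}$ is sent by $\pi^\infty_k$ to the right regular action of $\F{k}$ on its Cayley graph $T^\infty_k$: the orbit of a vertex is therefore exactly the lift of $T^\infty_k$ through that vertex, proving (1). For (2) and (3), equivariance forces $w . \mathcal{T}_{w^\prime} \subset (\pi^\infty_k)^{-1}(ww^\prime) = \mathcal{T}_{ww^\prime}$, and injectivity of the deck action yields equality; the same argument applies to right translation and to cylinders. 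Finally (4) follows since an element of $G^\sigma_{n,k}$ stabilizes (set-wise) $\mathcal{T}_w$ or $\mathcal{K}_{w^\prime,w^\prime t_j}$ if and only if its image in $\F{k}$ fixes the corresponding vertex or edge of $T^\infty_k$, which, as $\F{k}$ acts freely on $T^\infty_k$, happens exactly when the element lies in $\ker p_\ast = \F{n}$.

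The only genuine subtlety is the bookkeeping in the boundary-reading step: one must check that the specific chosen lifts $F_j$ of $f_j$ (i.e.\ the ones fixing the vertices of $\widetilde{T}_e$) are precisely those for which the lifted square boundary closes up and reads $E_i t_j F_j(E_i)^{-1} t_j^{-1}$ in the labels of $E^+(\mathcal T) \cup E^+(R_k)$. Everything else is a translation between the cell structure of the suspension recorded in Lemma \ref{a dire} and the covering space $T^\infty_k \to R_k$ obtained by collapsing the horizontal direction.
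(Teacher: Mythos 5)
Your proposal is correct and is exactly the standard argument the paper has in mind: the paper in fact states Lemma \ref{orbit-map} without any written proof, treating the orbit-map as the evident lift to universal covers of the collapse $K_{(\mathcal F,\Gamma)} \to R_k$ of the horizontal direction, with kernel $\F{n}$ on fundamental groups. Your construction, the reading of the fibers, and the equivariance argument for items (1)--(4) supply precisely that missing routine verification (the only point worth spelling out slightly more is that the pre-image of a half-tree of $T^\infty_k$ is connected, being a union of horizontal trees and cylinders glued along them, which gives ``exactly two'' components rather than just ``at least two'').
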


\section{Space with walls structure}
\label{space with walls}

{\em Spaces with walls} were introduced in \cite{HaglundPaulin} in order to check the Haagerup property. When given a set $X$, a {\em wall} of $X$ is a partition $w$ of $X$ in two non-empty classes, also called {\em sides of $w$}. Let $x, y$ be any two points in the set $X$. A wall $w$ of $X$ {\em separates $x$ from $y$} if and only if $x$ and $y$ belong to distinct sides of $w$. When given a family of walls $\mathcal W$, the number of walls in $\mathcal W$ separating two points $x$ and $y$ is denoted by $\mu_{\mathcal W}(x,y)$ and called the {\em wall distance} between $x$ and
$y$ (it might a priori be infinite). If $X$ is contained in an arc-connected topological space $Y$, a path $p$ in $Y$ between two points in $X$ {\em spans a wall $w$} if $w$ separates the endpoints of $p$. 
A {\em space with walls} is a pair $(X,\mathcal W)$ where $X$ is a set and $\mathcal W$ is a family of walls such that for any two distinct points $x, y$ in $X$, $\mu_{\mathcal W}(x,y) < \infty$. 
We say that a discrete group {\em acts properly} on a space with walls $(X,\mathcal W)$ if it leaves invariant $\mathcal W$ and for some (and hence any) $x \in X$ the
function $g \mapsto \mu_{\mathcal W}(x,gx)$ is proper on $G$.

\begin{theorem}[\cite{HaglundPaulin}] 
A discrete group $G$ which acts properly on a space with walls is a-T-menable.
 \end{theorem}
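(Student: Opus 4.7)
The plan is to extract from the wall structure a proper conditionally negative definite function on $G$. The natural candidate is
$$f(g) := \mu_{\mathcal W}(x_0, g x_0)$$
for some arbitrary basepoint $x_0 \in X$: properness of $f$ is immediate from the very definition of a proper action on a space with walls, so the only point to check is the conditional negative definiteness of $f$.

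To this end, I would construct an affine isometric action $\alpha$ of $G$ on a real Hilbert space such that $\|\alpha(g) \cdot 0\|^2 = 2 f(g)$; the standard equivalence between affine isometric actions and conditionally negative definite functions (see \cite{Cherix}) then yields the conclusion. Let ${\mathcal H}$ denote the set of all \emph{half-spaces}, that is, pairs $(w, S)$ where $w \in {\mathcal W}$ and $S$ is one of the two sides of $w$. Since $G$ preserves ${\mathcal W}$ and permutes the sides of each wall, it acts by permutations on ${\mathcal H}$ and hence defines a unitary representation $\pi$ on the real Hilbert space $\ell^2({\mathcal H})$. For $x \in X$, write $S_x \subset {\mathcal H}$ for the set of half-spaces containing $x$, and define
$$b \colon G \to \ell^2({\mathcal H}), \quad b(g) := \chi_{g \cdot S_{x_0}} - \chi_{S_{x_0}}.$$

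The key computation is that the symmetric difference $g \cdot S_{x_0} \, \triangle \, S_{x_0} = S_{g^{-1} x_0} \, \triangle \, S_{x_0}$ contains, for each wall $w$ separating $x_0$ from $g^{-1} x_0$, exactly the two half-spaces sitting over $w$; hence $b(g)$ lies in $\ell^2({\mathcal H})$ with
$$\|b(g)\|^2 = 2 \, \mu_{\mathcal W}(x_0, g^{-1} x_0) = 2 f(g),$$
the last equality using that $G$ preserves ${\mathcal W}$. A direct verification gives the cocycle identity $b(gh) = b(g) + \pi(g) b(h)$, so that $\alpha(g) v := \pi(g) v + b(g)$ is the desired affine isometric action.

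The main (and essentially only) subtlety will be to justify the square-summability of $b(g)$, which is precisely what the finiteness axiom $\mu_{\mathcal W}(x, y) < \infty$ built into the definition of a space with walls provides; once $b$ is known to take values in $\ell^2({\mathcal H})$, the cocycle identity and the norm computation are routine.
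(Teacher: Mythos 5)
Your proof is correct and is precisely the standard Haglund--Paulin argument (the paper only cites this theorem from \cite{HaglundPaulin} without reproducing a proof): the cocycle $b(g)=\chi_{g\cdot S_{x_0}}-\chi_{S_{x_0}}$ on $\ell^2$ of half-spaces, square-summable because $\mu_{\mathcal W}$ is finite, with $\|b(g)\|^2=2\mu_{\mathcal W}(x_0,gx_0)$, yields the proper conditionally negative definite function. The only blemish is the identity $g\cdot S_{x_0}=S_{g^{-1}x_0}$, which should read $g\cdot S_{x_0}=S_{gx_0}$; since $\mu_{\mathcal W}(x_0,g^{-1}x_0)=\mu_{\mathcal W}(gx_0,x_0)$ by $G$-invariance of $\mathcal W$, this does not affect the conclusion.
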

 
 \begin{remark}
 \label{murs classiques}
 
 Let $\F{k}$ be the rank $k$-free group, and let ${T}^\infty_k$ be its Cayley graph for a free basis. If $E$ is any $1$-cell of $T^\infty_k$, if $w_-$ and $w_+$ denote the left and right sides of of $E$ then $(w_-,w_+)$ is a {\em classical $j$-wall for $\F{k}$}. The tree $T^\infty_k$ together with the collection of all the classical walls is a space with walls upon which $\F{k}$ acts properly.
  \end{remark}
 
 In order to get Theorem \ref{the theorem} we will need the (stronger) result below (we refer to \cite{Nica} for a similar statement).
 
 \begin{definition}
 
 Let $(X,\mathcal W)$ be a space with walls. Two walls $(u,u^c) \in \mathcal W$ and $(v,v^c) \in \mathcal W$ {\em cross}
 if all four intersections $u \cap v$, $u \cap v^c$, $u^c \cap v$ and $u^c \cap v^c$
 are non-empty.
 \end{definition}

\begin{theorem}[\cite{ChatterjiNiblo}]
\label{ceci est un rappel}
Let $G$ be a discrete group which acts properly on a space with walls $(X,\mathcal W)$. Then $G$ acts properly isometrically on some CAT(0) cube complex $Cb(\mathcal W)$ whose dimension is equal to the (possibly infinite) supremum of the cardinalities
of finite collections of walls which pairwise cross. In particular $G$ is a-T-menable.\\
\end{theorem}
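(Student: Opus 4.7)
The plan is to carry out the standard Sageev-type construction adapted to the space-with-walls setting, producing $Cb(\mathcal W)$ as a cube complex of \emph{admissible orientations} of walls, with edges and higher cubes recorded from the combinatorics of $\mathcal W$. First, for each wall $w \in \mathcal W$ I record its two sides as a pair of complementary \emph{half-spaces} and form the set $\mathcal H$ of all half-spaces, partially ordered by inclusion. An \emph{orientation} is a choice function $\sigma$ which to every $w \in \mathcal W$ assigns one of the two half-spaces bounded by $w$; such an $\sigma$ is \emph{coherent} (or \emph{consistent}) if whenever $\sigma(w) \subseteq h$ for some $h \in \mathcal H$ bounded by a wall $w'$, then $\sigma(w') = h$. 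For every $x \in X$ the \emph{principal orientation} $\sigma_x$, sending each $w$ to the side containing $x$, is coherent.

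Next I define the vertex set of $Cb(\mathcal W)$ to consist of all coherent orientations whose symmetric difference with some fixed $\sigma_{x_0}$ is finite; the hypothesis $\mu_{\mathcal W}(x,y) < \infty$ ensures that every principal orientation $\sigma_x$ is a vertex and lies in the same class. I then join two vertices by an edge whenever they differ on exactly one wall, and I glue in a $k$-cube for every set of $k$ coherent orientations pairwise differing on exactly one of $k$ \emph{pairwise-crossing} walls (this forces cubes to appear precisely as predicted by the crossing structure). The graph-metric on the $1$-skeleton of $Cb(\mathcal W)$, restricted to principal orientations, equals $\mu_{\mathcal W}$ on $X$: going from $\sigma_x$ to $\sigma_y$ requires flipping exactly the walls that separate $x$ from $y$.

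The cube complex $Cb(\mathcal W)$ is CAT(0): simple connectedness is Sageev's classical argument using the disconnecting \emph{hyperplanes} corresponding to each wall, and Gromov's link condition reduces to checking that vertex-links are flag complexes, which holds by construction because whenever $k$ walls pairwise cross we have filled in the $k$-cube. The dimension of $Cb(\mathcal W)$ is then, by construction, the supremum of cardinalities of finite pairwise-crossing families of walls, as stated. Each $g \in G$ permutes $\mathcal W$, hence acts on $\mathcal H$ and on orientations via $(g\cdot\sigma)(gw) := g\cdot\sigma(w)$; the equality $g \cdot \sigma_x = \sigma_{gx}$ makes the action on vertices well-defined, and it extends isometrically to cubes. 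Properness of the $G$-action on $Cb(\mathcal W)$ follows because the distance from $\sigma_{x_0}$ to $g\sigma_{x_0} = \sigma_{gx_0}$ in the $1$-skeleton equals $\mu_{\mathcal W}(x_0, gx_0)$, which is proper in $g$ by hypothesis; a-T-menability is then immediate from the Haagerup-type theorem for CAT(0) cube complex actions (or directly from the original Haglund--Paulin result).

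The main obstacle is the verification that $Cb(\mathcal W)$ is CAT(0): connectedness of the chosen component, simple connectedness via the hyperplane-cutting argument, and the flag condition on links all require careful bookkeeping of how half-spaces, coherence, and crossings interact. Once these are in hand, properness and the dimension identification are essentially formal. I would expect the rest of the paper not to re-prove this general theorem but simply to cite it, the authors' task being to exhibit the space-with-walls structure of Theorem~\ref{the theorem} on $G^\sigma_{n,k}$ (via the classical walls from Remark~\ref{murs classiques} and the ``diagonal walls'' announced in the introduction) and to verify that pairwise-crossing families are bounded, so that the cube complex produced by Theorem~\ref{ceci est un rappel} is finite-dimensional.
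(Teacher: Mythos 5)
Your sketch is the standard Sageev/Chatterji--Niblo/Nica construction (ultrafilter-type coherent orientations, edges for single flips, cubes for pairwise-crossing families, CAT(0) via simple connectedness plus the flag link condition, properness from the identification of the $1$-skeleton metric on principal orientations with $\mu_{\mathcal W}$), and it is correct. As you anticipated, the paper does not prove this theorem at all but merely cites \cite{ChatterjiNiblo}; its own work is entirely in building the space-with-walls structure and bounding the pairwise-crossing families, so there is nothing in the paper to compare your argument against beyond the citation.
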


\fbox{\begin{minipage}{\textwidth}

{\centerline {\bf Notations for Sections from $4$ to $7$ (included):}}  \hfill \\

{\bf $\sigma \colon \F{k} \hookrightarrow \Out{\F{n}}$ is a monomorphism such that $\sigma(\F{k})$ is a tied, one-sided unipotent subgroup (see Definition \ref{tied}). \\

$(\mathcal F, \Gamma)$ is a tied, one-sided BFH-representative of $\sigma(\F{k})$ (see Theorem \ref{BFH} and Definition \ref{tied})) where ${\mathcal F} = \{f_i\}_{i=1,\cdots,k}$, set of reduced filtered homotopy equivalences of $\Gamma$ (see Definition \ref{BFHpolynomial}), lifts to $\{F_i\}_{i=1,\cdots,k}$,  set of reduced graph-maps of $\mathcal T$ with $\pi_\Gamma \colon \mathcal T \rightarrow \Gamma$ the universal covering of $\Gamma$. We assume that $\Gamma$ is well-built (see Definition \ref{wb0} and Lemma \ref{wb}). \\

$G^\sigma_{n,k} = \F{n} \rtimes_\sigma \F{k}$ is the suspension of $\F{n}$ by $\F{k}$ over $\sigma$.\\

${\mathcal K}_{({\mathcal F},\Gamma)}$ is the mapping-cylinder of $\Gamma$ under $\mathcal F$ (see Definition \ref{def de suspension}).\\

$\pi^\infty_k \colon  {\mathcal K}_{({\mathcal F},\Gamma)} \rightarrow T^\infty_k$ is the orbit-map (see Lemma \ref{orbit-map}).}

\end{minipage}}

\section{Vertical walls}

\subsection{Definition and stabilizers} \hfill

By Lemma \ref{orbit-map}, for any $1$-cell $E$ of $T^\infty_k$ $(\pi^\infty_k)^{-1}(E)$ is a cylinder over $\mathcal T$ which cuts ${\mathcal K}_{({\mathcal F},\Gamma)}$ in two connected components, whose union contains all the vertices of ${\mathcal K}_{({\mathcal F},\Gamma)}$. This allows us to give the following

\begin{definition}
\label{mur vertical}
A {\em vertical $j$-wall} ($j \in \{1,\cdots,k\}$) is any pair  $({(\pi^\infty_k)}^{-1}(w_-),{(\pi^\infty_k)}^{-1}(w_+))$ where $(w_-,w_+)$ is a classical $j$-wall for the vertical subgroup $\F{k}$ (see Remark \ref{murs classiques}). The {\em collection of vertical walls} is the collection composed of all the vertical $j$-walls.
\end{definition}

\begin{remark}
\label{elpo}
By definition and Lemma \ref{orbit-map}, for any $j = 1,\cdots,k$ the vertical $j$-walls are in bijection with the $j$-cylinders over $\mathcal T$.
\end{remark}

\begin{lemma}
\label{stabilisateur vertical}
The collection of all the vertical $j$-walls is invariant for the left-action of $G^\sigma_{n,k}$ on ${\mathcal K}_{({\mathcal F},\Gamma)}$. The horizontal subgroup is the left- and right-stabilizer of any vertical $j$-wall.  
\end{lemma}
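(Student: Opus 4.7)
The plan is to deduce both assertions directly from Lemma \ref{orbit-map}, using the bijection of Remark \ref{elpo} between vertical $j$-walls and $j$-cylinders $\mathcal{K}_{w,wt_j}$ over $\mathcal{T}$.

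For invariance, I first observe that the orbit-map $\pi^\infty_k$ is $G^\sigma_{n,k}$-equivariant with respect to the canonical projection $p \colon G^\sigma_{n,k} \to \F{k}$, where $\F{k}$ acts on its Cayley graph $T^\infty_k$ by multiplication on vertices. This is already built into Lemma \ref{orbit-map}: items (1) and (4) say that the horizontal subgroup preserves each fibre $\mathcal{T}_w$, while item (3) says that a generator $t_j$ of the vertical subgroup shifts fibres in accordance with multiplication on the vertices of $T^\infty_k$; together with the semidirect product structure $G^\sigma_{n,k} = \F{n} \rtimes_\sigma \F{k}$, these two facts determine the full action. Since left (resp. right) multiplication of $\F{k}$ on $T^\infty_k$ preserves the edge-labels $t_1,\dots,t_k$, it permutes the classical $j$-walls of $\F{k}$; pulling back via the equivariant $\pi^\infty_k$ then shows that the collection of vertical $j$-walls is invariant under both the left and the right actions of $G^\sigma_{n,k}$.

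For the stabiliser statement, the bijection of Remark \ref{elpo} identifies a vertical $j$-wall with its separating $j$-cylinder, so stabilising the wall setwise (even allowing the two sides to be swapped) amounts to stabilising the corresponding cylinder setwise. Item (4) of Lemma \ref{orbit-map} then directly identifies this stabiliser, for both actions, with the horizontal subgroup. If a self-contained argument is preferred, I would instead argue through equivariance: a stabilising element $g \in G^\sigma_{n,k}$ forces $p(g) \in \F{k}$ to preserve the underlying $1$-cell of $T^\infty_k$, but $\F{k}$ acts freely on the vertices of its Cayley tree and, being torsion-free, cannot invert any $1$-cell, so $p(g) = e$ and $g \in \ker p = \F{n}$; the reverse inclusion is immediate, since every element of $\F{n} = \ker p$ acts trivially on $T^\infty_k$ and hence preserves each side of every vertical wall. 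No step presents a serious obstacle; the lemma is essentially a translation of Lemma \ref{orbit-map}(4) into the language of walls, once the correspondence of Remark \ref{elpo} is noted.
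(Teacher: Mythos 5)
Your proof is correct and follows essentially the same route as the paper: both arguments rest on the equivariance of the orbit-map encoded in items (3) and (4) of Lemma \ref{orbit-map}, the fact that $\F{k}$ permutes its classical $j$-walls, and item (4) for the identification of the stabiliser with the horizontal subgroup. Your additional self-contained argument for the stabiliser (freeness of the $\F{k}$-action on $T^\infty_k$ plus torsion-freeness ruling out edge inversion) is a harmless elaboration of what the paper leaves implicit.
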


\begin{proof}
Any element $g$ of $G^\sigma_{n,k}$ uniquely decomposes as $hv$ with $h$ (resp. $v$) in the horizontal (resp. vertical) subgroup. If $\mathcal W =  ({(\pi^\infty_k)}^{-1}(w_-),{(\pi^\infty_k)}^{-1}(w_+))$ is a vertical $j$-wall, $g.\mathcal W =  (hv.{(\pi^\infty_k)}^{-1}(w_-),hv.{(\pi^\infty_k)}^{-1}(w_+))$. By Lemma \ref{orbit-map}, items (3) and (4), $g. \mathcal W = ({(\pi^\infty_k)}^{-1}(v.w_-),{(\pi^\infty_k)}^{-1}(v.w_+))$. Since $(w_-,w_+)$ is a classical $j$-wall for $\F{k}$, $v.(w_-,w_+) = (v.w_-,v.w_+) = (w^\prime_-,w^\prime_+)$ is a classical $j$-wall for $\F{k}$. Thus $g \mathcal W = ({(\pi^\infty_k)}^{-1}(w^\prime_-),{(\pi^\infty_k)}^{-1}(w^\prime_+))$ is a vertical $j$-wall for $G^\sigma_{n,k}$, hence the $G^\sigma_{n,k}$-invariance of the collection of vertical walls. Item (4) of Lemma \ref{orbit-map} gives the horizontal subgroup as left- and -right stabilizer of any vertical $j$-wall.
\end{proof}

\subsection{Finiteness}

\begin{lemma}
\label{pas con}
For any two elements $g=hv$ and $g^\prime = h^\prime v^\prime$ of $G^\sigma_{n,k}$, the orbit-map induces a bijection between the vertical walls separating $g$ from $g^\prime$ and the classical walls of $\F{k}$ between $v$ and $v^\prime$.
\end{lemma}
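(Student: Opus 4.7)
The plan is to use the fundamental fact that the orbit-map $\pi^\infty_k$ is equivariant with respect to the projection $p \colon G^\sigma_{n,k} \twoheadrightarrow \F{k}$, $hv \mapsto v$, onto the vertical subgroup. This equivariance is not stated explicitly but is built into the construction: the collapse $K_{(\mathcal F,\Gamma)} \to R_k$ (sending $\Gamma$ to the base vertex of $R_k$) induces $\pi^\infty_k$ on universal covers, and on fundamental groups it induces exactly $p$. Equivalently, it is a direct consequence of items (2), (3), (4) of Lemma \ref{orbit-map}: the horizontal subgroup stabilizes each sheet ${\mathcal T}_w$ (hence acts trivially on $T^\infty_k$), while the vertical subgroup acts on the sheets by left translation consistent with its action on $T^\infty_k$.

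First, I would record this equivariance and deduce the key consequence: identifying $g = hv$ and $g^\prime = h^\prime v^\prime$ with the corresponding vertices of ${\mathcal K}_{({\mathcal F},\Gamma)}$ (via the orbit of the base vertex in ${\mathcal T}_e$), one has $\pi^\infty_k(g) = v$ and $\pi^\infty_k(g^\prime) = v^\prime$.

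Next, by Definition \ref{mur vertical} together with Remark \ref{elpo}, the assignment
\[
(w_-,w_+) \;\longmapsto\; \bigl({(\pi^\infty_k)}^{-1}(w_-),\,{(\pi^\infty_k)}^{-1}(w_+)\bigr)
\]
is a bijection between classical $j$-walls of $\F{k}$ and vertical $j$-walls of $G^\sigma_{n,k}$; taking the union over $j \in \{1,\dots,k\}$ gives a bijection between classical walls of $\F{k}$ and the full collection of vertical walls. Because $\pi^\infty_k$ is a surjection onto $T^\infty_k$ whose fibers are connected (they are copies of $\mathcal T$ or $j$-cylinders), the pre-image preserves the side relation.

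Finally, I would check that this bijection restricts to the desired one: a vertical wall $({(\pi^\infty_k)}^{-1}(w_-),{(\pi^\infty_k)}^{-1}(w_+))$ separates $g$ from $g^\prime$ if and only if $\pi^\infty_k(g) = v$ and $\pi^\infty_k(g^\prime) = v^\prime$ lie in distinct classes of the partition $(w_-,w_+)$, which by definition means that the classical wall $(w_-,w_+)$ separates $v$ from $v^\prime$. There is no real obstacle here — the only point worth stating carefully is the equivariance of the orbit-map (and hence the computation $\pi^\infty_k(g) = v$), since this is what translates vertical walls exactly into classical walls of $\F{k}$ at the level of the separation relation.
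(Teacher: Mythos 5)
Your argument is correct and follows essentially the same route as the paper: both proofs reduce the statement to the bijection between vertical walls and (pre-images of) $1$-cells of $T^\infty_k$ from Remark \ref{elpo}, and then use items (2)--(4) of Lemma \ref{orbit-map} (the horizontal subgroup stabilizing the sheets and cylinders, so that $\pi^\infty_k(hv)=v$) to translate the separation relation. Your explicit statement of the equivariance of $\pi^\infty_k$ is a slightly more careful packaging of what the paper asserts directly, but it is the same proof.
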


\begin{proof}
By definition, the classical walls of $\F{k}$ between $v$ and $v^\prime$ are in bijection with the $1$-cells which separate $v$ from $v^\prime$. By Remark \ref{elpo}, the vertical walls are in bijection with the cylinders over $\mathcal T$. For any horizontal elements $h, h^\prime$ and vertical elements $v, v^\prime$ in $G^\sigma_{n,k}$, a cylinder over $\mathcal T$ separates $hv$ from $h^\prime v^\prime$ if and only if it is the pre-image of a $1$-cell separating $v$ from $v^\prime$. Since the horizontal subgroup stabilizes the cylinders over $\mathcal T$, this implies that the vertical walls separating $hv$ from $h^\prime v^\prime$ are in bijection with the classical walls of the vertical subgroup $\F{k}$ separating $v$ from $v^\prime$ and the lemma is proved.
\end{proof}

As a straightforward consequence of this lemma, we get the following

\begin{proposition}
\label{vertical}
Any two elements of $G^\sigma_{n,k}$ are separated by a finite number of vertical walls.
\end{proposition}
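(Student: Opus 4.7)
The plan is simply to combine Lemma \ref{pas con} with the elementary geometry of the Cayley tree $T^\infty_k$ of the free group $\F{k}$, since all the real work has been done in establishing the bijection of Lemma \ref{pas con}.

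First, given any two elements $g = hv$ and $g^\prime = h^\prime v^\prime$ of $G^\sigma_{n,k}$ (written in their unique decomposition with $h,h^\prime$ horizontal and $v,v^\prime$ vertical), Lemma \ref{pas con} furnishes a bijection between the vertical walls separating $g$ from $g^\prime$ and the classical walls of $\F{k}$ separating $v$ from $v^\prime$. It therefore suffices to show that only finitely many classical walls of $\F{k}$ separate $v$ from $v^\prime$.

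Second, by Remark \ref{murs classiques} the classical $j$-walls of $\F{k}$ are in bijection with the $1$-cells of $T^\infty_k$, each wall being determined by its left and right sides with respect to the corresponding $1$-cell. Since $T^\infty_k$ is a tree, there is a unique reduced edge-path (the geodesic) from $v$ to $v^\prime$, and a classical wall separates $v$ from $v^\prime$ if and only if the associated $1$-cell lies on this geodesic.

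Third, this geodesic has finite length, namely the word-length distance $d_{T^\infty_k}(v,v^\prime) \leq |v|_S + |v^\prime|_S$ for the chosen free basis $S = \{t_1,\ldots,t_k\}$ of $\F{k}$. Consequently there are only finitely many $1$-cells, hence only finitely many classical walls, separating $v$ from $v^\prime$. Transporting this finiteness back through the bijection of Lemma \ref{pas con} yields the desired conclusion for vertical walls separating $g$ from $g^\prime$. There is no real obstacle here; the statement is essentially a corollary of Lemma \ref{pas con} together with the tree-like geometry of $T^\infty_k$.
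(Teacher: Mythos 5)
Your proof is correct and follows exactly the route the paper intends: the paper derives Proposition \ref{vertical} as a ``straightforward consequence'' of Lemma \ref{pas con}, and your argument simply spells out the omitted step that only finitely many classical walls of $\F{k}$ (those corresponding to the $1$-cells on the finite geodesic in $T^\infty_k$) separate $v$ from $v^\prime$.
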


%\end{document}

%\begin{definition}
%\label{couleur}
%Let $E$ be a  horizontal $1$-cell  of ${\mathcal K}_{({\mathcal F},\Gamma)}$. 
%\begin{enumerate}

%  \item 
  
%  \item $\displaystyle Bot(\mathfrak E) = \bigcup_{i=1}^n Bot_i(\mathfrak E)$.
  
%\end{enumerate}

%\end{definition}

\section{Diagonal walls}
\label{diagonal walls}

Let us recall that an identification of $G^\sigma_{n,k}$ with the orbit of the vertices in some fixed lift, in ${\mathcal K}_{({\mathcal F},\Gamma)}$, of a maximal tree in $\Gamma$ has been chosen. We also recall that the BFH-representative we work with is well-built. This implies in particular that the image of a $\F{n}$-generator contains only once an essential cell of the same label. Since the definition of these diagonal walls involves some tedious definitions, let us begin by giving a sketch of the whole construction:

\subsection{Sketch of the construction of diagonal walls}

Of course, the vertical walls previously defined do not separate the horizontal subgroup $\F{n} \lhd G^\sigma_{n,k}$ so that we now aim to get a proper space with walls structure for the horizontal subgroup.  The discussion below gives an idea of the construction in the case where the graph of the BFH-representative is the rose.

The classical walls of $\F{n}$ are cut by the (exceptional) $1$-cells. Let us consider such a $1$-cell $E$. If it is topmost, then by taking all its images under the subgroup generated by the various lifts one easily gets a system of cuts which defines a wall for $G^\sigma_{n,k}$. Let us consider then the case where $E$ is not topmost. This implies the existence of at least one square $C$ in ${\mathcal K}_{({\mathcal F},\Gamma)}$ which contains $E$ in its top and whose bottom has a different, higher, label. Such a square allows to connect the two vertices of $E$ without passing through $E$. Thus taking all the images of $E$ under the subgroup generated by the lifts, as preceedingly, does not disconnect ${\mathcal K}_{({\mathcal F},\Gamma)}$. One has to ``complete'' this first set of cuts. The strategy to deal with these non-topmost cells (the ``generic'' case) is to add vertical cuts in such a way that the boundary of each square contains an even number of cuts. Once this property satisfied, it then makes sense to define the two sides of the $E$-wall depending on the parity of cuts contained in an edge-path from $g \in G^\sigma_{n,k}$ to the initial element of the associated p.o. edge.

Observe that, if one wishes the left-translates of $E$ under the subgroup generated by the lifts to be cuts, then one does not have the choice of which vertical boundary to be a cut in order to get no more than two cuts in the boundary of our squares. We now refer the reader to Figure \ref{ceststyle}.

\begin{figure}[htbp]
{\centerline{\includegraphics[height=6cm, viewport = 65 420 540 680,clip]{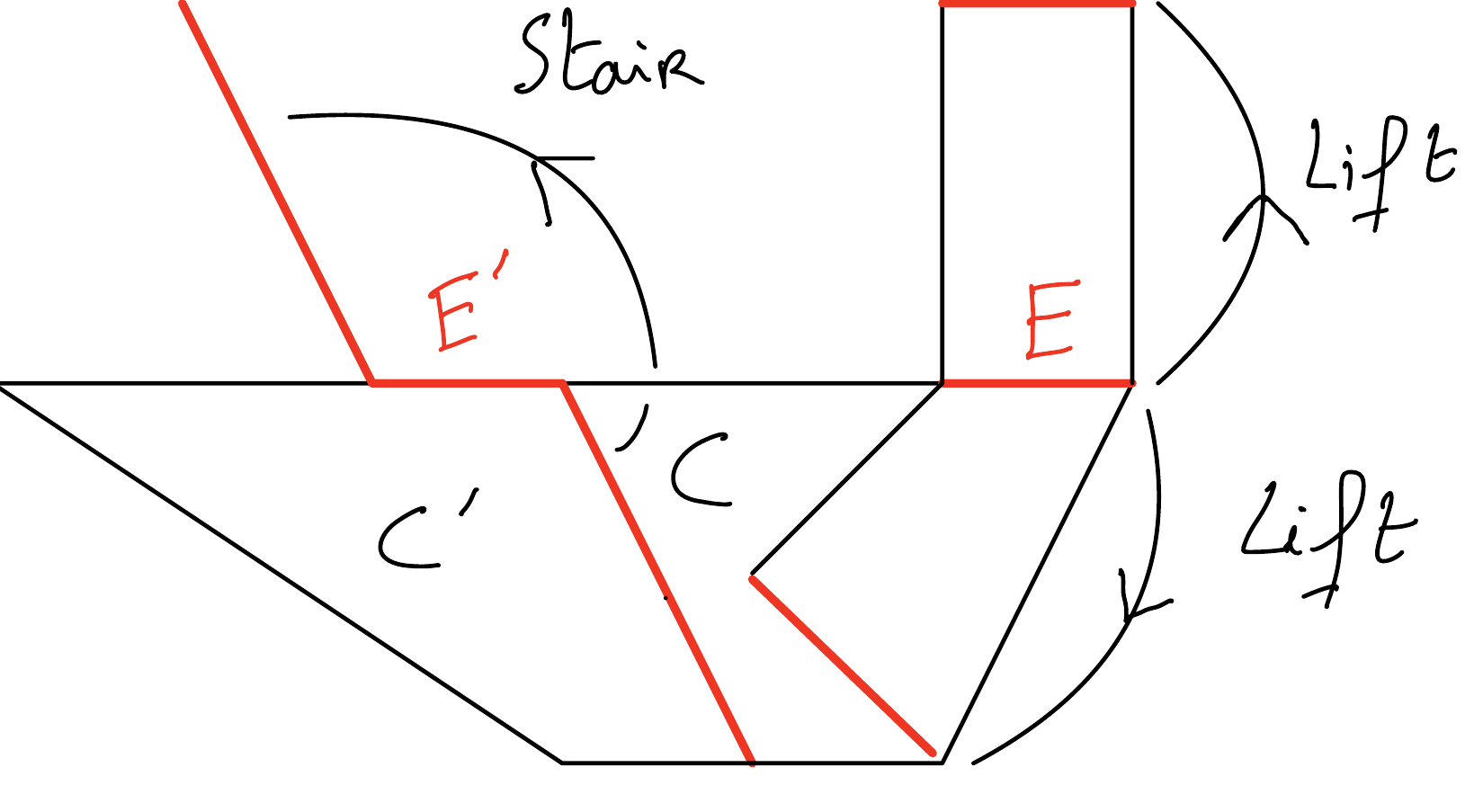}}} \caption{\label{ceststyle} A model for the construction}
\end{figure}

Once defined this vertical boundary in $\partial C$ as a cut, one then has to propagate the cuts by left-translations by horizontal elements, which will at the end lie in the stabilizer of the wall. There is an obstruction: a vertical cut in the left (resp. right) boundary of $C$ is a vertical cut in the right (resp. left) boundary of the adjacent square $C^\prime$ whose bottom has same label. It is thus impossible to translate along the bottom of $C$. The trick is to declare the copy $E^\prime$ of $E$ in the top of $C^\prime$ as a second horizontal cut. However it is impossible to add the lifts of $E^\prime$ without creating squares with odd number of cuts in the boundary: this leads to the definition of the stairs in Definition \ref{atomion}. These elements are defined to carry the vertical cut on the opposite side of $E$ with respect to $E^\prime$ (see Figure \ref{ceststyle}).

In the general case where the graph of the BFH-representative is not the rose, a little bit more care is needed in order to define what it means for an image of a horizontal  generator spanning a given one (like above the image of the bottom of $C$ spans $E$): this is the purpose of the definitions of $Top(E)$, $Bot(E)$ and $B(E)$ and the notion of {\em $E$-spanning twins} in \ref{topshift}. The {\em color} of a vertex is introduced in Definition \ref{couleur}, the cuts being later defined in \ref{cut} as the vertical p.o. edges whose initial vertex have a positive color. The {\em c-bottoms, c-tops and blocks} in Definition \ref{atomion} are the colored version of the tops and bottoms of \ref{topshift}. The {\em biblock} in Definition \ref{oprotein} is the generalization of the passage above from the square $C$ to the union of $C$ with $C^\prime$ above.

The properness of the action comes easily as soon as, at the beginning of the construction, we chose to define as cuts all the $1$-cells in the orbit of our initial horizontal cut under the subgroup generated by the lifts. However, since the bottom of a square containing an essential $1$-cell in its top might be an exceptional $1$-cell, one has to artificially add, to the horizontal cuts, the exceptional $1$-cells. Let us observe that this constraint of putting the whole lift of the initial horizontal cut in the set of cuts is one of the main reasons why we have to restrict to tied one-sided subgroups.

\subsection{Tops, bottoms and colors}

\begin{definition}

\label{topshift}

Let $E$ be an horizontal EoE-cell (see Definition \ref{ddb}) of ${\mathcal K}_{({\mathcal F},\Gamma)}$.

\begin{enumerate}

\item  We denote by $Bot(E)$ the union over $i \in \{1,\cdots,k\}$ of the $Bot_i(E^i_{top})$. The tree $Top_i(E)$ is the top of the $i$-cylinder over the smallest tree containing $Bot(E)$ and we set $$Top(i,E) = \bigcup^{k}_{j=1} ({\mathcal L}_{i,E} \circ {\mathcal L}^{-1}_{j,E}) (Top_j(E))$$. The {\em $E$-top $Top(E)
$} then denotes ${\mathcal L}^{-1}_{i,E}(Top(i,E))$. The {\em $E$-bottom $B(E)$} is the completion of the union over $i \in \{1,\cdots,k\}$ of the sets $V(Top(i,E) t^{-1}_i)$.

\item Let $I$ be a subset of $\{1,\cdots,k\}$. A pair of twins in $Cr(B(E))$ is {\em $E$-spanning} if it spans $Bot^*_i(E^i_{top})$ for some $i \in \{1,\cdots,k\}$, and it is {\em $(I,E)$-spanning} if this holds for any $i \in I$ and no other one.
  
\end{enumerate} 

\end{definition}

\begin{remark}
By definition, the $E$-bottem $B(E)$ constructed above contains the pre-images of all the vertices in all the $Top(i,E)$, $i \in \{1,\cdots,k\}$. This has, as a consequence, that farther in the paper the left-translates of our ``blocks'' (Definition \ref{atomion} farther) are disjoint. This is however not an absolute requisite: what we really need here is only that the left-translates of our ``elementary cuts'' (Definition \ref{cut}) do not belong to the same biblock (Definition \ref{oprotein}).
\end{remark}

\begin{lemma}
\label{analyse}
Let $E$ be an horizontal EoE-cell of ${\mathcal K}_{({\mathcal F},\Gamma)}$. Assume given a pair of twins $\{X,Y\}$ for $B(E)$ and denote by $p_{E,X}, p_{E,Y}$ the unique geodesics from $E$ (so not crossing $E$) respectively to $X$ and $Y$. Then:

\begin{enumerate}
  \item The set $\{1,\cdots,k\}$ decomposes into disjoint subsets $I_X, I_Y, I_{X,Y}, N_{X,Y}$ respectively defined as the sets of indices such that only $p_{E,X}$, resp. only $p_{E,Y}$, resp. both $p_{E,X}$ and $p_{E,Y}$, resp. neither $p_{E,X}$ nor $p_{E,Y}$ span $Bot_i(E^i_{top})$ for each $i \in I_X$, resp. for each $i \in I_Y$, resp. for each  $i \in I_{X,Y}$, resp. for each $i \in N_{X,Y}$. 
  
  \item With the notations of Item $(1)$, the pair $\{X,Y\}$ is $(I_X \cup I_Y,E)$-spanning.
  
  \item The pair $\{X,Y\}$ spans both the cell in $Bot^*_i(E^i_{top})$ which is highest in $p_{E,X}$ for $i \in I_X$ and the cell in $Bot^*_i(E^i_{top})$ which is highest in $p_{E,Y}$ for $i \in I_Y$. In particular, $X$ and $Y$ are. higher than any cell of $Bot^*_i(E^i_{top})$ in $p_{E,X} \cup p_{E,Y}$.

\end{enumerate}
\end{lemma}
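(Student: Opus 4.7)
The three items call for three distinct arguments. Item (1) is a tautological partition: for each $i \in \{1,\ldots,k\}$ the two independent binary conditions ``does $p_{E,X}$ span $Bot_i(E^i_{top})$?'' and ``does $p_{E,Y}$ span $Bot_i(E^i_{top})$?'' yield four mutually exclusive classes, which are precisely $I_X$, $I_Y$, $I_{X,Y}$ and $N_{X,Y}$, and their disjoint union is $\{1,\ldots,k\}$. For Item (2), I would count modulo $2$ the $Bot^*_i(E^i_{top})$-crossings on the reduced geodesic from $X$ to $Y$. Since $p_{E,X}$ and $p_{E,Y}$ both issue from a vertex of $E$ without crossing $E$, the reduction of the concatenation $p_{E,X}^{-1} \cdot p_{E,Y}$ either (a) retraces a common initial segment up to the fork and then exits along $p_{E,Y}$, when both paths start at the same vertex of $E$, or (b) additionally crosses $E$ exactly once, when they start at opposite vertices. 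In case (a) the common initial subarc's contribution cancels modulo $2$; in case (b) the single edge $E$, which is excluded from $Bot^*_i(E^i_{top})$ by definition, contributes nothing. The parity of crossings on the $X$--$Y$ geodesic therefore equals the sum modulo $2$ of the parities along $p_{E,X}$ and along $p_{E,Y}$, and is odd exactly when $i \in I_X \cup I_Y$; this is $(I_X \cup I_Y, E)$-spanning in the sense of Definition \ref{topshift}.

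Item (3) carries the real content. Fix $i \in I_X$ and let $Z$ be the highest cell of $Bot^*_i(E^i_{top})$ lying on $p_{E,X}$. Since $Z \in p_{E,X}$, the pair $\{X,Y\}$ spans $Z$ exactly when $Z \notin p_{E,Y}$, so I would argue by contradiction. Supposing $Z$ lies on both paths, $Z$ sits on their common initial segment. The parity count of Item (2), applied with $i \in I_X \setminus I_Y$, then forces the portions of $p_{E,X}$ and $p_{E,Y}$ beyond the fork to carry together an odd number of $Bot^*_i(E^i_{top})$-cells, all strictly lower in filtration than $Z$ by maximality of $Z$. Invoking the tied structure of Definition \ref{tied} together with Corollary \ref{toutcapourca} and the filtered form $f_i(e_j) = v_{j-1} e_j u_{j-1}$ coming from Definition \ref{BFHpolynomial}, I expect such an after-fork crossing to force a companion cell of $Bot^*_i(E^i_{top})$ on $p_{E,X}$ itself that is strictly higher than $Z$, contradicting the choice of $Z$. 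The case $i \in I_Y$ is handled symmetrically.

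The ``in particular'' height statement then follows from the well-built hypothesis (Definition \ref{wb0} and Lemma \ref{wb}): every $C \in Bot^*_i(E^i_{top})$ is the bottom of an $i$-square whose top $F_i(C)$ contains $E^i_{top}$, so the filtered form of $f_i$ bounds the height of the label of $C$ by that of the label of $E^i_{top}$; well-built applied to the essential cells $X, Y \in Cr(B(E))$ then promotes this comparison to a strict inequality. The main obstacle I anticipate is the contradiction step in Item (3), namely the genuine extraction, from the tied and one-sided assumptions, of the after-fork cell on $p_{E,X}$ outranking $Z$. This is where the restrictive hypotheses of Definition \ref{tied} are essentially used, and I expect it to be the most delicate part of the whole proof.
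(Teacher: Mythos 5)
Your treatment of Items (1) and (2) matches the paper's, which dismisses them as ``a tautology'' and ``a straightforward consequence of the definitions'': your mod-$2$ bookkeeping on the reduction of $p_{E,X}^{-1}\cdot p_{E,Y}$ is exactly the implicit content, and the paper even sidesteps your case (b) by first observing that, $E$ being an EoE-cell, $X$ and $Y$ lie on the same side of $E$, so both geodesics issue from the same vertex.

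The problem is Item (3), where you yourself flag the decisive step as something you ``expect'' rather than prove, namely that an after-fork crossing forces a companion cell on $p_{E,X}$ outranking $Z$. That is a genuine gap, and it also reaches for the wrong tools: the paper does not argue by contradiction and does not invoke the tied hypothesis or Corollary \ref{toutcapourca} here. Its mechanism is a direct height-monotonicity observation already present in the proof of Lemma \ref{fondamental}: if $C$ is a cell with $E^i_{top}$ contained in $F_i(C)$, then, $F_i$ being reduced, $F_i(C)$ contains the $F_i$-image of the whole geodesic $R$ from $E$ to $C$, whence $C$ is higher than every cell of $R$. Consequently every cell of $Bot^*_i(E^i_{top})$ lying on the $X$--$Y$ geodesic $q$ (i.e.\ beyond the fork) is higher than every cell of $Bot^*_i(E^i_{top})$ lying on the common stem from $E$ to the fork. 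Combined with the parity count underlying Item (2) --- for $i\in I_X$ the images $F_i(p_{E,X})$ and $F_i(p_{E,Y})$ end on opposite sides of $E^i_{top}$, so $q$ carries an odd, hence nonzero, number of cells of $Bot^*_i(E^i_{top})$ --- this places the highest such cell of $p_{E,X}$ on $q$, where it is automatically spanned by $\{X,Y\}$ since $q$ is a geodesic in a tree. The closing assertion about $X$ and $Y$ is, as you say, the well-built hypothesis. In short: your proof of Item (3) does not close without replacing the conjectured contradiction step by the nesting argument ``$F_i(C)\supset F_i(R)$, therefore $C$ is higher than everything on $R$''.
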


\begin{proof}
Item $(1)$ is a tautology and Item $(2)$ a straightforward consequence of the definitions. Let us prove Item $(3)$. Since $E$ is an EoE-cell, $X$ and $Y$ lie in a same side of $E$, without loss of generality we thus assume that $p_{E,X}$ and $p_{E,Y}$ both lie in the right-side of $E$. If $I_X$ is non-empty then for any $i$ in $I_X$, $F_i(p_{E,X})$ ends in the left-side of $E^i_{top}$ whereas $F_i(p_{E,Y})$ ends in the right-side. This implies that the geodesic $q$ from $X$ to $Y$ contains an odd number of cells in $Bot^*_i(E^i_{top})$. By definition of a filtered homotopy equivalence, any one is higher than any other one in the geodesic from $E$ to $q$. The assertion about $X$ and $Y$ comes from the fact that $(\mathcal F,\Gamma)$ is well-built.
\end{proof}

%\begin{definition}
%\label{cloud}
%
%With the notations of Lemma \ref{analyse}, the sets $I_X$ and $I_Y$ are respectively termed the {\em sets of $E$-spanning indices of $X$ and $Y$}. 
%
%The {\em cloud} of $E$ is the set of all left $\F{n}$-translates $Z=h_{\gamma,\gamma^{tw}}.E$ where $\{\gamma,\gamma^{tw}\}$ is a $E$-spanning pair and the set of $E$-spanning indices of $\gamma^{tw}$, termed the {\em $Z$-bridge}, is non-empty.
% \end{definition}

 \begin{definition}
\label{couleur}
Let $T$ be a horizontal subtree of ${\mathcal K}_{({\mathcal F},\Gamma)}$.

\begin{enumerate}

  \item A {\em $\mathcal E$-color} on $T$  w.r.t. some subset $\mathcal E$ of $1$-cells in $T$ is a map $\chi \colon V(T) \rightarrow \mz/2 \mz$ such that, for any two distinct $v,w$ in $V(T)$ which belong a same $1$-cell $E^\prime$, $\chi(v) + \chi(w) = 1 \mbox{ mod } 2$ if and only if $E^\prime \in \mathcal E$.

 \item A {\em $k$-color} on $T$ is a map $\chi \colon V(T) \rightarrow (\mz/2 \mz)^k$ whose each $i$-coordinate $\chi_i$ is a ${\mathcal E}_i$-color map for some set of $1$-cells ${\mathcal E}_i$ in $T$.
 
 \item A {\em $k$-color on a union of disjoint horizontal trees $T_i$ in  ${\mathcal K}_{({\mathcal F},\Gamma)}$} is a map $\chi \colon V(\bigcup_i T_i) \rightarrow (\mz/2 \mz)^k$ which restricts to a $k$-color on each $T_i$ and satisfies that, if $h$ is a horizontal geodesic between two disjoint $T_i$'s then the colors on the endpoints of $h$ agree.
\end{enumerate}
\end{definition}

\subsection{Blocks}

\begin{definition}

\label{atomion}

Let $E$ be an horizontal EoE-cell of ${\mathcal K}_{({\mathcal F},\Gamma)}$. Let $I= \emptyset$ or $I=I_E$ (the $E$-tied-set)  and let $\epsilon \in \pm$. 

\begin{enumerate}

   \item  The {\em $(\epsilon,I,E)$-c-top}, with $E$ as {\em $(\epsilon,I)$-nucleus}, denotes $Top(E)$ equipped with the $k$-color map defined as follows:
   
   \begin{itemize}
     \item For  any $i \in I$, the $i$-coordinate  is the $E$-color which is positive on the left (resp. right) vertex of $E$ if $\epsilon = +$ (resp. $\epsilon = -$).
     \item For any $l \notin I$, the $l$-coordinate is the $Bot^*_l(E^l_{top})$-color which vanishes on both vertices of $E$.
   \end{itemize}
    
   \item The {\em $(\epsilon,I,E)$-c-bottom}, with $E$ as {\em $(\epsilon, I)$-center}, denotes $B(E)$ equipped with the $k$-color map defined as follows:
   
   \begin{itemize}
   \item If $I = \emptyset$, for any $l \in \{1,\cdots,k\}$ the $l$-coordinate is the $Bot^*_l(E^l_{top})$-color which vanishes on both vertices of $E$.
 
  If $I \neq \emptyset$:  
   
      \item For any $i \in I$, the $i$-coordinate is the $Bot_i(E^i_{top})$-color which is strictly positive on the left (resp. right) vertex of $E$ if $\epsilon = +$ (resp. $\epsilon = -$).
      
   \item For any $l \notin I$,  the $l$-coordinate is the vanishing color.
   \end{itemize}  
   
    \item The {\em $(\epsilon,I,E)$-block ${\mathfrak M}(\epsilon,I,E)$} is the union of the $(\epsilon,I,E)$-c-bottom with:
    
    \begin{itemize}
      \item If $I = I_E$, for each $i \in I$ the $(-\epsilon,I,E^i_{top})$-c-top,
      \item If $I=\emptyset$, for each $l \in \{1,\cdots,k\}$ the $(\epsilon,\emptyset,E^l_{top})$-c-top.
    \end{itemize}
  
The $(\epsilon,\emptyset,E)$-c-top (resp. $(\epsilon,\emptyset,E)$-block) is termed {\em white $E$-c-top} (resp. {\em white $E$-block}).  

A $k$-color map coming with a c-top, c-bottom or block will be termed {\em admissible color}. More generally, a $k$-color map defined on an union of disjoint bottoms $B(.)$ or tops $Top()$ will be termed an {\em admissible color} if its restriction to each such subset makes it into a c-top or c-bottom.

 \item A {\em block-lift of ${\mathfrak M}(\epsilon,I,E)$} is any $(l,E^l_{bot})$-lift ${\mathcal L}_{l,E^l_{bot}}$ for $l \notin I$. 
 
 \item Given a pair of (incoming,outgoing)-twins $(X^-,X^+)$ of the $(\epsilon,I,E)$-c-bottom which spans $E$ and the pair $(X^-_i,X^+_i)$ which spans $E^i_{top}$ for $i \in I$ in the crowns of the corresponding c-tops, we term {\em left $(i,E)$-stair} (resp. {\em right $(i,E)$-stair}) the left-translation by the element ${\mathcal S}_{+,i,E}$ (resp. ${\mathcal S}_{-,i,E}$) which sends $X^+_i$ to $X^-$ (resp. $X^-_i$ to $X^+$).  The term of {\em block-stair} will denote any of these stairs. The integers $l,i$ are {\em the indices of the lifts or stairs considered}.
 
\end{enumerate}

\end{definition}

\begin{lemma}
\label{lemme a la con}
With the assumptions and notations of Definition \ref{atomion}:

\begin{enumerate}

  \item Block-lifts preserve the admissible color.
  
  \item For each $i \in I$, the image of the $(-\epsilon,I,E^i_{top})$-c-top under the block-stair ${\mathcal S}_{\epsilon,i,E}$ is separated from the $(\epsilon,I,E)$-c-bottom by exactly $1$-essential cell.
  
  \item Assume $I = I_E \neq \emptyset$. If one considers the images, for $i \in I_E$ of each $(-\epsilon,I,E^i_{top})$-c-tops under the block-stair ${\mathcal S}_{\epsilon,i,E}$,  the admissible colors of the endpoints of this cell agree so that an admissible color is defined on the union of the block with its images under these block-stairs.

\end{enumerate}

\end{lemma}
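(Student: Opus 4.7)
The plan is to unfold Definitions \ref{atomion} and \ref{couleur} and treat each of the three items separately; the argument is essentially bookkeeping, with the heart of the matter sitting in item (3). For item (1), a block-lift ${\mathcal L}_{l,E^l_{bot}}$ with $l \notin I$ is a left-translation by an element of $G^\sigma_{n,k}$, hence a deck transformation of ${\mathcal K}_{({\mathcal F},\Gamma)}$, and therefore preserves all intrinsic combinatorial data (labels, heights, cylinder structure, the sets $Bot_j(E^j_{top})$, crowns, tops, bottoms). Since every ingredient of the admissible color in Definition \ref{atomion} is built from these data, the image of the color is automatically admissible on the translate of the block.

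For item (2), I would start from Definition \ref{atomion}(5), say for $\epsilon=+$: the stair ${\mathcal S}_{+,i,E}$ is the left-translation carrying the outgoing twin $X^+_i$ of the $(-\epsilon,I,E^i_{top})$-c-top to the incoming twin $X^-$ of the $(\epsilon,I,E)$-c-bottom. Since $X^+_i$ and $X^-$ are essential cells with the same $\F{n}$-label, this translation is well-defined, and after applying it the $1$-cell $X^+_i$ is identified with $X^-$. As $X^-$ sits in the crown of $B(E)$ and the translated c-top lies entirely on the side of $X^-$ opposite to $B(E)$, by construction of $B(E)$ as the completion containing all vertices of the $Top(i,E)\,t^{-1}_i$, the unique essential cell separating the translated c-top from $B(E)$ is $X^-$ itself.

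For item (3), I would test the admissibility of the union coordinate by coordinate on the two endpoints $v^-,v^+$ of the essential cell $X^-$ produced in (2). For the $i$-coordinate (with $i\in I_E$), the $(\epsilon,I_E,E)$-c-bottom carries the $Bot_i(E^i_{top})$-color with value at a vertex of $E$ dictated by $\epsilon$, while the translated $(-\epsilon,I_E,E^i_{top})$-c-top carries the $E^i_{top}$-color with value at a vertex of $E^i_{top}$ dictated by $-\epsilon$. Using Lemma \ref{analyse} to count the essential cells of $Bot^*_i(E^i_{top})$ crossed along the geodesics from $E$ and from $E^i_{top}$ to $v^\pm$, and observing that $X^-$ itself lies in $Bot^*_i(E^i_{top})$, the color values produced on each endpoint on the two sides match; this is exactly why the nucleus and center signs in Definition \ref{atomion} were set to opposites $-\epsilon$ and $\epsilon$. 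For $j\in I_E\setminus\{i\}$ the translated c-top's $j$-coordinate vanishes on both endpoints of $X^-$ by the definition of the c-top, and the c-bottom's $j$-coordinate vanishes there as well because the corresponding geodesic crosses the appropriate $Bot_j$-cells an even number of times; the same check with $Bot^*_l$ handles $l\notin I_E$.

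The expected main obstacle is the parity bookkeeping in item (3): one must track exactly how many essential cells of each $Bot_j(E^j_{top})$ sit between the separating cell $X^-$ and the centers $E$ and $E^i_{top}$. This is where the one-sided and tied hypotheses actively enter: Lemma \ref{fondamental} controls on which side of $E^i_{top}$ the image under $F_i$ of a geodesic ends up, and Corollary \ref{toutcapourca} together with Lemma \ref{analyse} record the precise spanning behaviour of twins with respect to the $Bot_i(E^i_{top})$. Combining these with the opposite sign convention between the c-top and the c-bottom is what yields compatibility of the $k$-colors at $v^-$ and $v^+$, and hence an admissible color on the full union.
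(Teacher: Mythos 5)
Your proposal is correct and follows the same route as the paper: the paper's entire proof of this lemma is the single sentence ``this is just a careful reading of Definition \ref{atomion}'', and your write-up is exactly that reading carried out item by item (deck-transformation invariance for the block-lifts, adjacency across the twin cell $X^-$ for the stairs, and the parity check of the $k$-color coordinates at the endpoints of that cell). The only cosmetic difference is that you already invoke Corollary \ref{toutcapourca} and the tied/one-sided hypotheses here, whereas the paper defers their use to Lemma \ref{coloriage}; for the present lemma the opposite sign convention $(\epsilon,-\epsilon)$ in Definition \ref{atomion} is all that is really needed.
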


\begin{proof}
This is just a careful reading of Definition \ref{atomion}.
\end{proof}

\begin{definition}
\label{companion}
Let $E$ be an horizontal EoE-cell of ${\mathcal K}_{({\mathcal F},\Gamma)}$ and let ${\mathfrak M}$ be the white $E$-block. Let $Z=h_{\gamma,\gamma^{tw}}.E$ where $\{\gamma,\gamma^{tw}\}$ is a $E$-spanning pair and the set of $E$-spanning indices of $\gamma^{tw}$, termed the {\em $Z$-bridge}, is non-empty. The {\em $Z$-companion of ${\mathfrak M}$} is the block ${\mathfrak M}(\epsilon,I_E,,Z^i_{top})$, where $\epsilon$ is chosen so that for $i \in I_E$ the $i$-coordinate of the admissible color is positive on the vertex of $Z$ in the same side of $Z$ as the $Z$-bridge.
\end{definition}

\subsection{Biblocks} \hfill

%The following lemma is a tautology:
%
%\begin{lemma}
%\label{estcelapeine}
%Let $E$ be an horizontal EoE-cell of ${\mathcal K}_{({\mathcal F},\Gamma)}$. The cloud of $E$ is empty if and only if one of the following two properties holds:
%
%\begin{enumerate}
%
%  \item either $E$ is topmost,
%  
%  \item or the only cells in $Bot^*_i(E)$ are exceptional cells.
%
%\end{enumerate}
%
%\end{lemma}

\begin{definition}
\label{oprotein}
Let $E$ be an horizontal EoE-cell of ${\mathcal K}_{({\mathcal F},\Gamma)}$ and let ${\mathfrak M}(E)$ denote the white $E$-block.  The {\em $E$-biblock ${\mathfrak P}(E)$} denotes:

\begin{enumerate}

  \item If some $Bot^*_i(E^i_{top})$ contains at least one non-exceptional cell, the completion of the union of ${\mathfrak M}(E)$ with a $Z$-companion whose bridge has greatest height.
  
  \item Otherwise, the block ${\mathfrak M}(E)$.
  
\end{enumerate}

The bridge between the two blocks of the $E$-biblock is termed {\em the main bridge}. The intersection of the $E$-biblock with a horizontal tree is a {\em $E$-cluster}.
%The {\em full $E$-biblock} is the cylinder over the $E$-biblock.
\end{definition}

\begin{lemma}
\label{coloriage}
Let $E$ be an horizontal EoE-cell of ${\mathcal K}_{({\mathcal F},\Gamma)}$. The admissible colors of the two blocks composing the $E$-biblock ${\mathfrak P}(E)$ define an admissible color of ${\mathfrak P}(E)$.
\end{lemma}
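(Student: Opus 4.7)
The plan is to verify the compatibility condition of Definition \ref{couleur} item (3) for the $k$-color obtained by unioning the two admissible colors on the blocks forming ${\mathfrak P}(E)$. In case (2) of Definition \ref{oprotein} there is nothing beyond Lemma \ref{lemme a la con} to prove, so I focus on case (1), where ${\mathfrak P}(E)$ is the completion of ${\mathfrak M}(E)$ together with a $Z$-companion. By Lemma \ref{lemme a la con} item (3), each of the two blocks individually carries an admissible color. Hence the only new verification is that, for any horizontal geodesic $h$ whose endpoints $u,v$ lie in distinct components of the (uncompleted) union of the two blocks, the $k$-colors $\chi(u)$ and $\chi(v)$ agree coordinatewise.

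Using the admissibility of the color on each block in turn, one reduces the question to agreement along the \emph{main bridge} of the biblock, i.e.\ along the unique horizontal geodesic $R$ joining the white nucleus $E$ of ${\mathfrak M}(E)$ to the center of the companion (the $1$-cell $Z^{i}_{top}$ arising from the distinguished $E$-spanning twin-pair produced in Definitions \ref{companion}, \ref{oprotein}). Thus it suffices to verify, for each of the $k$ coordinates, that the color value prescribed on the $E$-side of $R$ matches the color value prescribed on the $Z^{i}_{top}$-side after accounting for the parity of cells crossed by $R$.

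For a coordinate $l \notin I_E$, both block-colors give the $l$-coordinate as a $Bot^*_l(\cdot)$-color that vanishes on the two endpoints of its respective nucleus. Combining Lemma \ref{analyse} item (3) (which controls which cells of $Bot^*_l$ are actually spanned by the main bridge) with one-sidedness and Lemma \ref{fondamental} (which fixes, for topmost images, the parity of side-switches in terms of crossings of $Bt_l$-cells), one sees that $R$ traverses an even number of cells of $Bot^*_l(E^l_{top})$ between the matching vertices, so the two vanishing colors are automatically compatible. For a coordinate $i \in I_E$, the $i$-coordinate vanishes on both vertices of $E$ in ${\mathfrak M}(E)$ while it is a $Bot_i(\cdot)$-color that is strictly positive on the vertex of $Z^{i}_{top}$ lying on the same side as the $Z$-bridge. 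The sign $\epsilon$ in Definition \ref{companion} was designed precisely so that this polarity matches the parity of $Bot_i(E^i_{top})$-cells the main bridge crosses, which again is pinned down by Lemma \ref{analyse} applied to the $E$-spanning pair defining $Z$ together with the tiedness clause in Definition \ref{tied}.

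The main obstacle will be the bookkeeping: one must simultaneously track the colors on the c-bottom and on the various c-tops of each block, and also check agreement across the block-stair translates introduced in Definition \ref{atomion}. Lemma \ref{lemme a la con} item (3) handles the stairs \emph{within} each block, so no new parity condition arises there; what remains is the coordinate-by-coordinate, side-by-side check along the main bridge sketched above, which is essentially the reason the definitions of the companion and of the sign $\epsilon$ were formulated in the way they are.
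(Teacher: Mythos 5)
Your verification at the endpoints of the main bridge is essentially the paper's argument: tiedness of $(\mathcal F,\Gamma)$ guarantees that $I_E$ is exactly the set of coordinates in which the admissible color of the white block is positive at the relevant endpoint (the other coordinates vanishing), and the sign $\epsilon$ in Definition \ref{companion} is chosen so that the companion's color matches there. That part is sound.

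The gap is in the sentence ``Using the admissibility of the color on each block in turn, one reduces the question to agreement along the main bridge.'' This reduction does not hold. Each block is a union of horizontal trees living at \emph{different} levels of the orbit-map: its c-bottom lies in some ${\mathcal T}_w$ while its c-tops lie in the trees ${\mathcal T}_{wt_i}$. Definition \ref{couleur}(3) only imposes compatibility along \emph{horizontal} geodesics between disjoint trees, and there is no horizontal geodesic joining the c-bottom of a block to one of its own c-tops; so admissibility within each block gives you no relation between the color at the bottom level and the color at the top levels. Consequently, besides the main bridge (the geodesic in ${\mathcal T}_w$ between the two c-bottoms), you must separately check agreement at the endpoints of the horizontal geodesics in each ${\mathcal T}_{wt_i}$ joining the c-top of the white block to the corresponding c-top of the companion. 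These geodesics are essentially the reductions of $F_i(R)$ where $R$ is the main-bridge geodesic, and their color parity is \emph{not} controlled by the bottom-level parity unless one knows that $F_i(R)$ does not recross cells of $Bt(E^i_{top})$ on the wrong side; this is precisely where the paper invokes one-sidedness through Corollary \ref{toutcapourca}. You cite one-sidedness and Lemma \ref{fondamental}, but only to control parities along $R$ itself at the bottom level; the top-level check is the genuinely missing step, and it is the step for which the one-sided hypothesis is actually indispensable.
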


\begin{proof}
This is just a matter of checking that the admissible colors of $\mathfrak M$ and of its companions agree at the endpoints of the main bridge and at the endpoint of any other horizontal geodesic between the two. By definition of the white block and of the admissible color, since $({\mathcal F,\Gamma})$ is tied $I_E$ is the set of positive coordinates of the admissible color of $\mathfrak M$ on the main bridge whereas the other coordinates vanish. Hence the admissible colors agree at the endpoints of the main bridge. Since $(\mathcal F,\Gamma)$ is one-sided Corollary \ref{toutcapourca} gives us that the admissible colors of the endpoints of any other horizontal geodesic connecting one block to the other agree.
\end{proof}

Lemma \ref{coloriage} gives sense to the following definition.

\begin{definition}
\label{appariement}
Let $E$ be an horizontal EoE-cell of ${\mathcal K}_{({\mathcal F},\Gamma)}$.  Let $(X^+,X^-)$ be an (incoming,outgoing)-coherent pair in the crown of a c-top or c-bottom in ${\mathfrak P}(E)$. We say that $(X^+,X^-)$ or the shift $h_{X^+,X^-}$, is {\em enabled} if the admissible color on $i(X^-)$ is equal to the admissible color on $t(X^+)$.  In this case, this admissible color is {\em the admissible color of $(X^+,X^-)$} or {\em of $h_{X^+,X^-}$}. Otherwise  $(X^+,X^-)$, or $h_{X^+,X^-}$ is {\em disabled}.
\end{definition}

\begin{lemma}
\label{okay}
With the assumptions and notations of Definition \ref{appariement}:

\begin{enumerate}

 \item At the exception of the pairs of twins which span the nuclei of the companion block in ${\mathfrak P}(E)$, or a center of a c-bottom, or which contain a bridge distinct from the main bridge of the $E$-biblock, any pair of twins is enabled.
 \item There is a label-preserving bijection between the set of all (incoming,outgoing)-pairs of twins $(X^+,X^-)$ containing a bridge in the companion in ${\mathfrak P}(E)$ and the set of all (incoming,outgoing)-pairs of twins $(Y^+,Y^-)$ containing a bridge in the white block in ${\mathfrak P}(E)$ which satisfies the following property:
 
 \begin{itemize}
 
   \item The admissible color at $t(X^+)$ (resp. $i(X^-)$) agrees with the admissible color at $i(Y^-)$ (resp. $t(Y^+)$).
   
   \item The admissible color of exactly one coherent pair among $(t(X^+),i(Y^-))$ and $(i(X^-),t(Y^+))$ does not vanish.

\end{itemize} 

\end{enumerate}
   
\end{lemma}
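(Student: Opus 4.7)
The plan is to translate the enabledness condition into a parity-counting problem on geodesics. By Definition~\ref{couleur}, each coordinate of an admissible color on a c-top or c-bottom is a $\mathcal{E}_l$-color for some set $\mathcal{E}_l$ of cells that can be read off from Definition~\ref{atomion}. Two vertices share the same $l$-coordinate if and only if the unique geodesic between them in the ambient tree crosses $\mathcal{E}_l$ in an even number of cells. Hence $(X^+, X^-)$ is enabled exactly when, for every $l \in \{1,\ldots,k\}$, the geodesic from $t(X^+)$ to $i(X^-)$ inside the ambient c-top or c-bottom meets $\mathcal{E}_l$ in an even number of cells.

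With this reformulation, item~(1) is obtained by case analysis on the location of the pair: in the crown of the white c-bottom, a white c-top, the companion c-bottom, or a companion c-top. In each case, the key structural input is Lemma~\ref{analyse}, which for a twin pair $\{X,Y\}$ in the crown of a c-bottom decomposes $\{1,\ldots,k\}$ into the four sets $I_X, I_Y, I_{X,Y}, N_{X,Y}$ according to which of $p_{E,X}, p_{E,Y}$ spans the relevant $Bot^*_l$. For a pair outside the exceptional list, the geodesic from $t(X^+)$ to $i(X^-)$ is obtained by concatenating the two half-geodesics $p_{E,X}$ and $p_{E,Y}$, and the spanning contributions from the two halves cancel modulo~$2$ in every coordinate. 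I expect each of the three exceptional configurations to correspond to a single coordinate acquiring an odd crossing: spanning the center of the companion c-bottom introduces an odd crossing of a $Bot_i$-cell controlling the $i$-coordinate for some $i \in I_E$; spanning a nucleus of a companion c-top introduces an odd crossing of that nucleus; and containing a bridge distinct from the main bridge violates the match at the bridge endpoints guaranteed by Lemma~\ref{coloriage}.

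For item~(2), the bijection is provided by the left-translation associated with the main bridge. Indeed, by Definition~\ref{companion} the companion block of $\mathfrak{P}(E)$ is obtained from the white block via the shift $h_{\gamma,\gamma^{tw}}$ at the main bridge $Z$; this shift, together with its inverse, maps label-preservingly an (incoming, outgoing) twin pair $(Y^+, Y^-)$ containing a bridge in the white block to a pair $(X^+, X^-)$ containing a bridge in the companion, the label-preservation coming from the fact that shifts are left-translations. The agreement of admissible colors at $t(X^+)$ with $i(Y^-)$, and at $i(X^-)$ with $t(Y^+)$, then follows from Lemma~\ref{coloriage} applied to the horizontal geodesics connecting the two blocks. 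Finally, the assertion that exactly one of the two pairs $(t(X^+), i(Y^-)), (i(X^-), t(Y^+))$ carries a non-vanishing admissible color will follow because the $i$-coordinate ($i \in I_E$) of the admissible color is forced to flip across the main bridge while all coordinates $l \notin I_E$ vanish identically on both $(\epsilon, I_E, \cdot)$-c-bottoms of $\mathfrak{P}(E)$.

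The main obstacle is item~(1): exhaustively enumerating the non-exceptional configurations in the four types of crowns and verifying the parity cancellation coordinate by coordinate. This is exactly where the tied and one-sided hypotheses enter, via Corollary~\ref{toutcapourca}: one-sidedness ensures that a cell of $\mathcal{E}_l$ crossed by $p_{E,X}$ is matched by a corresponding crossing by $p_{E,Y}$ (and vice versa), while the tied condition restricts the coordinates that can change across a nucleus to the index set $I_E$, keeping the parity count compatible with the structure of the admissible color.
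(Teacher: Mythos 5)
Your proposal follows essentially the same route as the paper, which proves this lemma in a single line by appealing to the structure of the admissible coloring together with the tied and one-sided hypotheses (i.e.\ exactly the ingredients you deploy: the parity reformulation of enabledness from Definition \ref{couleur}, the decomposition of Lemma \ref{analyse}, the colour-matching of Lemma \ref{coloriage}, and Corollary \ref{toutcapourca}); your write-up is a faithful elaboration of what the paper leaves implicit. The one imprecision is in item (2): the companion is the block ${\mathfrak M}(\epsilon,I_E,Z^i_{top})$, centred at a top of the translate $Z$ rather than at $Z$ itself and built with $I=I_E$ rather than $I=\emptyset$, so the label-preserving bijection is induced by the composition of the shift with a block-lift (still a left-translation) rather than by the shift $h_{\gamma,\gamma^{tw}}$ alone — this does not affect the substance of your argument.
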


\begin{proof}
This is a consequence of the admissible coloring and of the fact that $(\mathcal F,\Gamma)$ is tied and one-sided.
\end{proof}

\begin{definition}
\label{mirrors}
With the assumptions and notations of Item $(2)$ in Lemma \ref{okay}: we term {\em mirror} the shift associated to a coherent pair $(t(X^+),i(Y^-))$ or $(i(X^-),t(Y^+))$.
\end{definition}

\subsection{Diagonal Subgroup and Cuts}

\begin{definition}
\label{diagonalsubgroup}
Let $E$ be an horizontal EoE-cell of ${\mathcal K}_{({\mathcal F},\Gamma)}$.

\begin{enumerate}
  \item The {\em $E$-horizontal subgroup ${\mathfrak H}(E)$} is the subgroup generated by all the enabled shifts of the $E$-biblock with non-vanishing admissible color: they are termed {\em $E$-horizontal generators}. If ${\mathcal C}$ is some $E$-cluster of ${\mathfrak P}(E)$, we denote by ${\mathfrak H}_{\mathcal C}(E)$ the subgroup of ${\mathfrak H}(E)$ generated by the shifts of ${\mathcal C}$ in the $E$-horizontal generators.
%   \item The {\em $E$-stair subgroup $St(E)$} is the subgroup generated by all the block-stairs of ${\mathfrak P}(E)$.
  \item The {\em $E$-diagonal subgroup ${\mathfrak D}(E)$} is the subgroup generated by all the $E$-horizontal generators and block-stairs: they are termed {\em $E$-diagonal generators}.
  \item The {\em $E$-replicative subgroup ${\mathfrak R}(E)$} is the subgroup generated by all the block-lifts and all the $E$-diagonal generators.
\end{enumerate}
\end{definition}

The  following, straightforward, lemma characterizes the subgroup of ${\mathfrak R}(E)$ which stabilizes the horizontal tree ${\mathcal T}_{\pi^\infty_k(E)}$ in ${\mathcal K}_{({\mathcal F},\Gamma)}$:

\begin{lemma}
\label{horEstab}
With the assumptions and notations of Definition \ref{diagonalsubgroup}: the stabilizer of ${\mathcal T}_{\pi^\infty_k(E)}$ in ${\mathfrak R}(E)$, termed {\em horizontal $E$-stabilizer}, is the subgroup ${\mathfrak S}_{hor}(E)$ of the horizontal subgroup $\F{n}$ generated by the elements $L^{-l}_1 h L^l_2$, for any horizontal $E$-generator $h$ of ${\mathfrak H}(E)$, and for any pair of block-stairs or lifts $L_1, L_2$ with same index.
\end{lemma}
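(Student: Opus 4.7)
The plan is to identify ${\mathfrak S}_{hor}(E)$ with the kernel of a natural projection of ${\mathfrak R}(E)$ to $\F{k}$, and then recognize its elements as words in the claimed generating family via free-group cancellation. Let $\rho \colon G^\sigma_{n,k} = \F{n} \rtimes_\sigma \F{k} \twoheadrightarrow \F{k}$ be the canonical quotient by the horizontal subgroup. Items $(2)$ and $(4)$ of Lemma \ref{orbit-map} together say that the left action of $g \in G^\sigma_{n,k}$ on the horizontal sheets of ${\mathcal K}_{({\mathcal F},\Gamma)}$ is given by $g . {\mathcal T}_w = {\mathcal T}_{\rho(g) w}$; therefore the ${\mathfrak R}(E)$-stabilizer of ${\mathcal T}_{\pi^\infty_k(E)}$ is precisely ${\mathfrak R}(E) \cap \ker(\rho) = {\mathfrak R}(E) \cap \F{n}$, which is ${\mathfrak S}_{hor}(E)$ by definition.

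Next, I evaluate $\rho$ on the generators of ${\mathfrak R}(E)$. Every $E$-horizontal generator is a shift lying in $\F{n}$ (Corollary \ref{corpartition}), hence projects to $1$; every block-lift or block-stair of index $i$ realizes a one-step vertical move along a $t_i$-edge of $T^\infty_k$ through $\pi^\infty_k$, so its $\rho$-image is a conjugate of $t_i^{\pm 1}$. The matching structure set up in Definitions \ref{atomion}--\ref{oprotein} and in Lemma \ref{okay} ensures that any two block-lifts or block-stairs of common index belonging to the biblock ${\mathfrak P}(E)$ have the \emph{same} $\rho$-image. Consequently, for any $l \in \mathbb{Z}$, any $E$-horizontal generator $h$, and any such pair $L_1, L_2$, the element $L_1^{-l} h L_2^l$ projects to $\rho(L_1)^{-l} \cdot 1 \cdot \rho(L_2)^l = 1$, showing that the subgroup $\Sigma$ generated by these elements is contained in ${\mathfrak S}_{hor}(E)$.

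For the reverse inclusion, I write any $g \in {\mathfrak S}_{hor}(E)$ as a word $g = x_0 M_1 x_1 M_2 x_2 \cdots M_s x_s$ in the $E$-replicative generators and their inverses, where each $M_j$ is a (possibly inverted) block-lift or block-stair and each $x_i$ is a product of $E$-horizontal generators. Triviality of $\rho(g)$ in the free group $\F{k}$ forces the existence of an adjacent cancelling pair $M_a, M_{a+1}$ with equal $\rho$-images and opposite exponents; the subword $M_a x_a M_{a+1}$ is then of the form $L_1^{-1} h L_2$ (up to inversion) and so lies in $\Sigma$. I strengthen the inductive hypothesis to allow each $x_i$ to be an arbitrary element of $\Sigma$, replace the cancelling subword by its $\Sigma$-value (thereby decreasing $s$ by $2$), and iterate down to the base case $s=0$, at which point $g = x_0 \in \Sigma$ via the $l=0$ case of the generating family. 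The principal obstacle is the delicate verification that the adjacent cancellations in $\F{k}$ always pair generators with \emph{identical} (not merely conjugate) $\rho$-images: this is exactly what the admissible coloring, the mirror bijection of Lemma \ref{okay}(2), and the tied/one-sided hypotheses on $({\mathcal F}, \Gamma)$ guarantee within a biblock, so the free-group cancellation argument carries through cleanly.
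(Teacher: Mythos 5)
The paper gives no proof of this lemma (it is introduced as ``straightforward''), so there is nothing to compare against line by line; your strategy --- identify the stabilizer of ${\mathcal T}_{\pi^\infty_k(E)}$ with ${\mathfrak R}(E)\cap\ker\rho$, where $\rho\colon G^\sigma_{n,k}\to\F{k}$ is the projection killing $\F{n}$, using items $(2)$ and $(4)$ of Lemma \ref{orbit-map}, then evaluate $\rho$ on the generators and argue by free cancellation in $\F{k}$ --- is surely the intended one, and the first step is correct. But your second paragraph contains a concrete error. A block-lift of index $i$ carries a cell of ${\mathcal T}_{wt_i^{-1}}$ (resp.\ ${\mathcal T}_w$) up to ${\mathcal T}_w$ (resp.\ ${\mathcal T}_{wt_i}$), so $\rho(\mathrm{lift})=wt_iw^{-1}$, whereas a block-stair of index $i$ carries a cell in the crown of a c-top, which lives in ${\mathcal T}_{wt_i}$, down to the crown of the c-bottom in ${\mathcal T}_w$, so $\rho(\mathrm{stair})=wt_i^{-1}w^{-1}$. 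Thus two lifts, or two stairs, of common index do share their $\rho$-image, but a lift and a stair of the same index have mutually \emph{inverse} images, and for such a mixed pair $L_1^{-l}hL_2^{l}$ projects to $wt_i^{\mp 2l}w^{-1}\neq 1$ and is not horizontal. Your forward inclusion only goes through if ``pair of block-stairs or lifts with same index'' is read as a pair of the same type; this needs to be said.

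The reverse inclusion has a genuine gap. Your induction finds an adjacent cancelling pair $M_a,M_{a+1}$ and replaces $M_ax_aM_{a+1}$ by its value in $\Sigma$; but once the hypothesis is strengthened so that $x_a$ is an arbitrary element of $\Sigma$, the subword $M_ax_aM_{a+1}$ is a conjugate of an element of $\Sigma$ by a single lift or stair, and $\Sigma$ is not visibly closed under such conjugation: the stated generating family only contains conjugates of a \emph{single} horizontal generator by \emph{matched powers} of \emph{one} pair, $L_1^{-l}hL_2^{l}$, and does not obviously contain conjugates by mixed words, e.g.\ $(LL')^{-1}h(LL'')$ with $L$ of index $i$ and $L',L''$ two distinct lifts of index $j\neq i$ (such configurations do occur, since the white block and its companion each contribute their own block-lifts, and each block carries a left and a right stair of every index in $I_E$). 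Some of the missing elements are recoverable, e.g.\ $L_1^{-1}L_2=(L_1^{-1}hL_1)^{-1}(L_1^{-1}hL_2)\in\Sigma$, but the closure statement $M^{-1}\Sigma M'\subseteq\Sigma$ for a cancelling pair $(M,M')$ is exactly what your induction consumes at every step after the first, and it is nowhere established; your closing sentence defers precisely this point to the tied and one-sided hypotheses without an argument. A clean repair is to run Reidemeister--Schreier with a Schreier transversal consisting of words in the lifts and stairs, obtaining the a priori larger generating set of conjugates of the $h$'s by arbitrary transversal words, and then to show by induction on the length of the conjugating word that these all lie in the subgroup generated by the family of the statement.
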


\begin{definition}
\label{cut}
Let $E$ be an horizontal EoE-cell of ${\mathcal K}_{({\mathcal F},\Gamma)}$.

The {\em elementary $E$-cuts} consist of:

\begin{enumerate}
  \item all the nuclei of the $E$-biblock,
  
  \item all the p.o. $t_i$-cells starting at a vertex whose admissible color is positive.
  \end{enumerate}
  
  The {\em (horizontal, vertical) $E$-cuts} are the left ${\mathfrak R}(E)$-translates of the (horizontal, vertical) elementary $E$-cuts.
\end{definition}

\begin{proposition}
\label{0 ou 2}
Let $E$ be an horizontal EoE-cell of ${\mathcal K}_{({\mathcal F},\Gamma)}$. There are an even number of $E$-cuts in the boundary of each square in ${\mathcal K}_{({\mathcal F},\Gamma)}$.
\end{proposition}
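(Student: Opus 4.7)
The strategy is as follows. Fix a square $C$ of ${\mathcal K}_{({\mathcal F},\Gamma)}$ and let $E' t_j F_j(E')^{-1} t_j^{-1}$ read its boundary, so that $E'$ is the bottom of $C$ and $F_j(E')$ its top (Lemma \ref{a dire}). Four sources contribute to $\#\{E\text{-cuts in }\partial C\}$: the bottom $E'$ itself (a horizontal cut or not); the horizontal cuts sitting among the edges of $F_j(E')$; and the two $t_j$-cells based at $i(E')$ and $t(E')$, each of which is a vertical cut precisely when the admissible color at its base is positive (Definition \ref{cut}). By ${\mathfrak R}(E)$-equivariance of Definition \ref{cut}, I may translate $C$ by an element of ${\mathfrak R}(E)^{-1}$; if no such translate carries $\partial C$ into the biblock ${\mathfrak P}(E)$, then no piece of $\partial C$ is a cut and the count is zero, hence trivially even.

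It therefore suffices to handle the case when $\partial C$ meets ${\mathfrak P}(E)$, and the key is to show that the parity of horizontal cuts in $\partial C$ equals the parity of vertical cuts in $\partial C$. This is where the admissible-color machinery of Definition \ref{atomion} does the work: the horizontal $E$-cuts in $\partial C$ are precisely translates of nuclei, and in each c-top and c-bottom the relevant coordinate of the admissible color is an $\mathcal E$-coloring whose flips occur exactly across those nuclei (or, for a coordinate $l \notin I_E$, across the cells of $Bot^*_l(E^l_{top})$, which are themselves the admissible horizontal cuts for that coordinate). I propose to verify the parity match by a case analysis on which of the c-bottoms or c-tops of the white block or of the companion contains $E'$ and $F_j(E')$: in each configuration, the change in $\chi_j$ between $i(E')$ and $t(E')$, computed along $\partial C$ through the bottom and through the top, matches the parity of horizontal cuts sitting in those pieces, precisely because the admissible color was constructed that way.

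The translate-invariance then propagates the count from ${\mathfrak P}(E)$ to its entire ${\mathfrak R}(E)$-orbit via Lemma \ref{lemme a la con}(1) for block-lifts, via the stair construction in Definition \ref{atomion}(5) together with Lemma \ref{lemme a la con}(2)--(3) for block-stairs, and via Lemma \ref{okay}(2) for mirror-shifts. The main obstacle is the case in which $C$ straddles the main bridge of ${\mathfrak P}(E)$, or straddles two ${\mathfrak R}(E)$-translates linked by a mirror: there the admissible colors coming from two distinct blocks must agree at the endpoints of the bridge and at every other horizontal connection between the two blocks in order for the parity count to stay consistent. This compatibility is exactly what the tied and one-sided hypotheses buy us, via Lemma \ref{coloriage} (agreement on the main bridge, using tiedness) and Corollary \ref{toutcapourca} (agreement on every other horizontal geodesic joining the two blocks, using one-sidedness). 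Once this delicate matching is settled, the residual finitely many configurations of $(E',F_j(E'))$ inside ${\mathfrak P}(E)$ close the parity check, and Proposition \ref{0 ou 2} follows.
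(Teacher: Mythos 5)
Your overall strategy is the one the paper follows: show that the elementary cuts, as organized by the admissible color on the biblock ${\mathfrak P}(E)$, put an even number of cuts in the boundary of each square of the cylinder over ${\mathfrak P}(E)$ (this is the content of Lemmas \ref{fm} and \ref{cfm}), and then transport this count to every square of ${\mathcal K}_{({\mathcal F},\Gamma)}$ by ${\mathfrak R}(E)$-equivariance, using Lemma \ref{coloriage} and Corollary \ref{toutcapourca} to reconcile the colors across the main bridge and the mirrors. Your local parity check and your identification of the tied/one-sided hypotheses as the ingredients for bridge and mirror compatibility are both on target.

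There is, however, a genuine gap in the propagation step, and it is precisely where the paper spends most of its effort. The $E$-cuts are defined in Definition \ref{cut} as the \emph{entire} left ${\mathfrak R}(E)$-orbit of the elementary cuts, not just the elementary cuts themselves. So when you ``translate $C$ into the biblock and count elementary cuts there,'' you are implicitly assuming that no \emph{other} element of ${\mathfrak R}(E)$ sends an elementary cut into $\partial C$ (equivalently, that no non-elementary cut lands inside ${\mathfrak P}(E)$). If some translate of a nucleus or of a colored vertex re-entered the biblock, a square could acquire a third cut and the parity would break. Ruling this out is not formal: it requires the disjointness statements of Lemmas \ref{one} and \ref{two} (translates of ${\mathfrak P}(E)$ under shifts, stairs and their combinations are pairwise disjoint, which rests on Lemma \ref{multiplicite} and Corollary \ref{toutcapourca}), the color-matching of Lemma \ref{three}, and the detailed analysis of the reduced images $F_t(RXR^{-1})$ of the geodesics between nuclei showing that no such geodesic contains the nucleus at the end of another. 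Only then does one obtain Lemma \ref{four}: a well-defined left ${\mathfrak R}(E)$-invariant admissible color on $\overline{{\mathfrak R}(E).{\mathfrak P}(E)}$ \emph{and} the statement that the elementary cuts are the only cuts in ${\mathfrak P}(E)$. The paper even flags why this is not automatic: adjoining the block-lifts to the generating set enlarges the horizontal $E$-stabilizer ${\mathfrak S}_{hor}(E)$ of Lemma \ref{horEstab}, so ${\mathfrak D}(E)$-invariance of the color does not immediately yield ${\mathfrak R}(E)$-invariance. Your proposal treats this entire well-definedness issue as settled by the local agreement at bridges and mirrors, which it is not; you would need to supply the disjointness and non-overlap arguments to close the proof.
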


\begin{proof}

\begin{lemma}
\label{facilemaisimportant}
Let $E$ be an horizontal EoE-cell of ${\mathcal K}_{({\mathcal F},\Gamma)}$ and let ${\mathcal C}$ be some $E$-cluster in ${\mathfrak P}(E)$ (see Definition \ref{oprotein}):
\begin{enumerate}

 \item Any two left ${\mathfrak H}_{\mathcal C}(E)$-translates of ${\mathcal C}$ are disjoint. For $h \in {\mathfrak H}_{\mathcal C}(E)$, $h.{\mathcal C}$ is separated from each closest cluster in ${\mathfrak H}_{\mathcal C}(E) . {\mathcal C}$ by exactly one horizontal essential cell belonging to both crowns. Two {\em adjacent} left ${\mathfrak H}_{\mathcal C}(E)$-translates of ${\mathcal C}$, i.e. which are separated by exactly one horizontal essential cell satisfy one of the following two assertions:
 
 \begin{itemize}
   \item either they  are the image one from the other by a conjugate of a $E$-horizontal generator in ${\mathfrak H}_{\mathcal C}(E)$, in which case the essential cell between the two is termed a {\em link},
   \item  or they are separated by the image of a bridge of the white $E$-block, in which case the essential cell between the two is termed a {\em bridge}.
\end{itemize}

 \item ${\mathcal T}_{\pi^\infty_k({\mathcal C})} \setminus {\mathfrak H}_{\mathcal C}(E).{\mathcal C}$ consists of:
 
 \begin{itemize} 
   \item The left ${\mathfrak H}_{\mathcal C}(E)$-translates of the trees based at the vertices in $\partial {\mathcal C}$ where the admissible color vanishes. 
    \item For any disabled pair of twins, the left ${\mathfrak H}_{\mathcal C}(E)$-translates of the ray contained in their $\F{n}$-axis starting at the vertex in $\partial \mathcal C$ with non-vanishing admissible color. 
 \end{itemize}
 \end{enumerate}
\end{lemma}

\begin{proof}
Any $E$-horizontal generator in ${\mathfrak H}_{\mathcal C}(E)$ is a shift so that it carries an incoming cell in $Cr(\mathcal C)$ to an outgoing one, or the converse. Hence, $h_{\gamma,\gamma^\prime}.{\mathcal C}$ is disjoint from $\mathcal C$ and connected to it by the essential cell $\gamma^\prime$. Moreover, if we denote by $h_{\gamma^-_j,\gamma^+_{j}}$, $j=1,2,\ldots$, the horizontal generators then all the $\gamma^-_j$ and $\gamma^+_j$ are distinct since their pairs belong to the partition of $Cr(\mathcal C)$ given by Lemma \ref{partition}. Item $(1)$ follows. Item $(2)$ is a straightforward consequence of the definitions.
\end{proof}

\begin{lemma}
\label{multiplicite}
With the assumptions and notations of Proposition \ref{0 ou 2}: let $m(\mathcal F,\Gamma)$ denote the maximal number of consecutive occurrences of a $\F{n}$-generator with same label in the reduced images of the $\F{n}$-generators under the $F_i$'s. Then the essential nucleus of any c-top in ${\mathfrak P}(E)$ is surrounded on each side in the c-top by at least $m(\mathcal F,\Gamma)-1$ essential cells with same label.
\end{lemma}

\begin{proof}
This is an immediate consequence of the definition of $Top(E)$.
\end{proof}

\begin{lemma}
\label{one}
With the assumptions and notations of Proposition \ref{0 ou 2}:

\begin{enumerate}

 \item For any link or bridge $X$ (see Lemma \ref{facilemaisimportant}) and any vertical element $t \in \F{k}$, neither $t^{-1} X t$ nor $t X t^{-1}$ contains the image of a nucleus in an orbit of a block-stair or lift.

 \item For any shift $h$ of a c-top in ${\mathfrak P}(E)$ and any shift $h^\prime$ of a c-bottom of the same block $\mathfrak M$, $h. {\mathfrak M}$ and $h^\prime . {\mathfrak M}$ are disjoint.
 
 \item The same assertion is true for a shift of a c-bottom or c-top and a mirror of ${\mathfrak P}(E)$ or two distinct mirrors of ${\mathfrak P}(E)$.

  \item The admissible coloring of ${\mathfrak P}(E)$ extends to a left ${\mathfrak H}(E)$-invariant admissible coloring of $\overline{{\mathfrak H}(E).{\mathfrak P}(E)}$ (the completion of ${\mathfrak H}(E).{\mathfrak P}(E)$). 

\end{enumerate}

\end{lemma}

\begin{proof}
Let us prove the first item. For the bridges, this is just a consequence of Corollary \ref{toutcapourca}. Let us consider the links coming from the shift at twins of the white block or its companion in ${\mathfrak P}(E)$. For the nuclei in the orbit of a lift, this comes from the fact that these shifts are always separated from such an orbit by a bridge and since $(\mathcal F,\Gamma)$ is one-sided Corollary \ref{toutcapourca} applies. For the nuclei in the orbit of a stair, observe that they are essential. Hence, from Lemma \ref{multiplicite}, if one considers the orbit of a vertex $g$ spanned by the shift, $t^{-1} g t$ and $t g t^{-1}$ will both be separated, in the corresponding horizontal tree, from the closest nucleus in the stair by at least $m(\mathcal F,\Gamma)$ essential cells with same label. By definition of $m(\mathcal F,\Gamma)$ this forbids $t^{-1} X t$ and $t X t^{-1}$ to contain a nucleus. Item $(1)$ is proved.

Item $(2)$ comes from Item $(1)$ and the fact that both shifts lie in distinct sides of the nucleus or center. Item $(3)$ comes from the fact that they are separated by at least one bridge and, since $(\mathcal F,\Gamma)$ is one-sided, Corollary \ref{toutcapourca} applies. Item $(4)$ comes from the fact that the considered shifts are enabled and there is exactly one mirror with non-vanishing color for each bridge (see Lemma \ref{partition}).
\end{proof}

\begin{lemma}
\label{two}
With the assumptions and notations of Proposition \ref{0 ou 2}:

\begin{enumerate}

  \item All the left translates of ${\mathfrak P}(E)$ under the elements of the subgroups generated by all the $E$-stairs are disjoint and are disjoint from ${\mathfrak H}(E).{\mathfrak P}(E)$.
  
  \item There is a left ${\mathfrak D}(E)$-invariant admissible color on $\overline{{\mathfrak D}(E).{\mathfrak P}(E)}$. 

\end{enumerate}
\end{lemma}

\begin{proof}
This is a consequence of the definitions and of Lemma \ref{multiplicite}. Just observe that ${\mathfrak D}(E)$ is the subgroup generated by ${\mathfrak H}(E)$ and $St(E)$.
\end{proof}

\begin{lemma}
\label{three}
With the assumptions and notations of Proposition \ref{0 ou 2}, let us consider the white $E$-block $\mathfrak M$ in ${\mathfrak P}(E)$. Let $h$ be a horizontal geodesic from the nucleus in a block-lift of $\mathfrak M$ to a closest nucleus of ${\mathfrak R}(E).{\mathfrak P}(E)$, coming from a nucleus in the companion in ${\mathfrak P}(E)$. If one colors the endpoints of $h$ as the endpoints of the initial nuclei in ${\mathfrak P}(E)$, then these admissible colors agree. 
\end{lemma}

\begin{proof}
This is a consequence of the fact that $(\mathcal F,\Gamma)$ is tied, one-sided and Corollary \ref{toutcapourca}.
\end{proof}

Let us now state a lemma which describes the ``fundamental'' $E$-biblock in terms of cuts:

\begin{lemma}
\label{fm}
With the assumptions and notations of Proposition \ref{0 ou 2} and Definition \ref{cut}: at the exception of the $i$-squares with bottom the nuclei of the c-tops of the white block ${\mathfrak M}$, any $i$-square in the cylinder over the $E$-biblock has one of the following types:
\begin{enumerate}
  \item It has two vertical elementary cuts in its boundary and none in its horizontal boundary.
  
  \item It has exactly one horizontal elementary cut and exactly one elementary vertical cut in its boundary.
  
  \item It has no elementary cut in its boundary.
\end{enumerate}

\end{lemma}

The following lemma is straightforward from Lemma \ref{fm}, \ref{lemme a la con} and the definition of the various lifts and stairs:

\begin{lemma}
\label{cfm}
With the assumptions and notations of Proposition \ref{0 ou 2} and Definition \ref{cut}: 
\begin{enumerate}
  \item The $i$-squares with bottom the nuclei $E^l_{top}$, $l \in \{1,\cdots,k\}$ of the c-tops of the white block have in their top the image of $E^l_{top}$ under the block-lift ${\mathcal L}_{i,E^l_{top}}$, which is a cut, and no vertical elementary cut in their boundary. The $l$-squares with bottom the center $E$ of the c-bottom in the white block have in their top the nuclei $E^l_{top}$, their bottom is the image of $E^l_{top}$ under the inverse of the block-lift ${\mathcal L}_{i,E}$, hence a cut and they have no vertical cut in their boundary.
  
  \item If one consider the images of ${\mathfrak P}(E)$ under the block-stairs as in Lemma \ref{lemme a la con} any $i$-square with $i \in I_E$ whose bottom in the crown of ${\mathfrak P}(E)$  belongs to an enabled pair of twins with non-vanishing color admits exactly two vertical cuts in its boundary and none in its horizontal one. Any $2$-cell whose bottom in the crown of ${\mathfrak P}(E)$  belongs to a pair of twins spanning a center or nucleus either satisfies the same property or admits no vertical cut in its boundary.
  
  \item If one considers a $i$-square with bottom a bridge of the $E$-biblock and $i \in I_E$, it inherits from the corresponding mirror or its inverse two vertical cuts in its boundary.
  
\end{enumerate}
   
\end{lemma}

We now complete the proof of Proposition \ref{0 ou 2}. Lemma \ref{two} gives a left ${\mathfrak D}(E)$-invariant admissible color on  $\overline{{\mathfrak D}(E).{\mathfrak P}(E)}$. Moreover, Lemma \ref{three} allows us to extend this admissible color, in an equivariant way, to the lifts of $\overline{{\mathfrak D}(E).{\mathfrak P}(E)}$. This does not mean at this point that one has a left-invariant admissible color on $\overline{{\mathfrak R}(E).{\mathfrak P}(E)}$ since adding the lifts to the generators of the previous subgroups adds elements in the horizontal $E$-stabilizer ${\mathfrak S}_{hor}(E)$ (see Lemma \ref{horEstab}).

Let us consider the nuclei of the companion block of ${\mathfrak P}(E)$. Since $(\mathcal F,\Gamma)$ is one-sided, the image in the corresponding c-top, under a block-lift of this block, of a horizontal geodesic $h$ as in Lemma \ref{three} leaves the c-top by the side where the admissible color vanishes. If one considers the nuclei of the white block, the image in the c-bottom under a block-lift of this block of a horizontal geodesic $h$ as in Lemma \ref{three} leaves the c-bottom by the side where there is no element of $Bot(E^l_{top})$ ($l \in \{1,\cdots,k\}$) hence also where the admissible color vanishes. 

We now have to check that, considering any two of these horizontal geodesics $h_0, h_1$ from (the image of) a nucleus in a block-lift-orbit to a block-stair-orbit, none of the two contains the (image of the) nucleus at the end of the other. The horizontal geodesic between a nucleus of the white block $\mathfrak M$ and its companion or an adjacent mirror image of this companion ${\mathfrak M}^\prime$ has the following form: let us consider the bridge $X$ between the two and without loss of generality assume that it is oriented from $\mathfrak M$ to ${\mathfrak M}^\prime$, and that it lies in the right-side of the center of the c-bottom of $\mathfrak M$. The geodesic from the c-bottom of the white block $\mathfrak M$ to the c-bottom of ${\mathfrak M}^\prime$ reads $R X R^\prime$ where $R^\prime = R^{-1}$ as an edge-path.Then, the reduction of $F_i(R X R^\prime)$ is the geodesic $h$ between the nucleus $E^i_{top}$ of $\mathfrak M$ and the corresponding nucleus of ${\mathfrak M}^\prime$. It decomposes in the following way:

\begin{itemize}
  \item $h$ starts with the subgeodesic $h_1$ of $F_i(R)$ after $E^i_{top}$: this subpath contains some $Y^i_{top}$ with $Y \in Bot^*_i(E)$ hence strictly higher than $E$.
  \item $h_1$ is followed by $F_i(X)$ which contains exactly once the highest $1$-cell $X^i_{top}$ of $h$ since, by Lemma \ref{analyse}, $X$ is the highest $1$-cell of $R X R^\prime$.
  \item $F_i(X)$ is followed by $h_2$ and, again by Lemma \ref{analyse}, either $h_1$ or $h_2$ contains the highest cell in $Bot^*_i(E)$ of the edge-path read by $R$ (this might be a $1$-cell with same label as $X$).
\end{itemize}
  
Finally, considering the reduced images or pre-images $h_t$ of $R X R^\prime$ under $F_t$ for $t \in \F{k}$ of length greater than $1$, one has to add to the image (or pre-image) of the following description the reduction of $m(\mathcal F,\Gamma)$ times the image of $E$, which in particular contains $m(\mathcal F,\Gamma)$ times a cell with the same label as $E$, for each increment of the length of $t$ by $1$.  

It follows from this description that, considering the images of such two distinct horizontal geodesics $h_t, h_{t^\prime}$ under a block-lift in a same c-top, or in the c-bottom of the white block, either  $h_t$ and $h_{t^\prime}$ diverge at some point or one is equal to the other, in which case the two nuclei at the end are identified, but none of the two might contain the nucleus at the end of the other. Thus there is no obstruction to extend equivariantly the admissible color and we so got the following

\begin{lemma}
\label{four}
With the assumptions and notations of Proposition \ref{0 ou 2}:

\begin{enumerate}
  \item There is a left ${\mathfrak R}(E)$-invariant admissible color on $\overline{{\mathfrak R}(E).{\mathfrak P}(E)}$.
  \item There are no other cut in ${\mathfrak P}(E)$ than the elementary cuts.
\end{enumerate}
\end{lemma} 

Lemmas \ref{fm} and \ref{cfm} give us that each $2$-cell of the cylinder over the union of the $E$-biblock with its crow inherit zero or two cuts in their boundary when applying once each lift and applying the stair generators as in Lemma \ref{lemme a la con}. Item $(1)$ of Lemma \ref{four} gives the preservation of the number of vertical cuts in each $2$-cell. Item $(2)$ of the same lemma gives the preservation of the number of horizontal cuts.
\end{proof}.

The following corollary is a straightforward consequence of Proposition \ref{0 ou 2}:

\begin{corollary}
\label{youyou}
Let $E$ be an horizontal EoE-cell of ${\mathcal K}_{({\mathcal F},\Gamma)}$. If $VH_R(E)$ (resp. $VH_L(E)$) denotes the set of all the vertices of ${\mathcal K}_{({\mathcal F},\Gamma)}$ which are connected to the right vertex of $E$ by an edge-path passing through an even (resp. odd) number of cuts then $(VH_L(E), VH_R(E))$ is a partition of the vertices of ${\mathcal K}_{({\mathcal F},\Gamma)}$, and of the elements of $G^\sigma_{n,k}$ in two disjoint, non-empty, components.
\end{corollary}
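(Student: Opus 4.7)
The plan is to read Proposition \ref{0 ou 2} as the cocycle condition that makes a ``cut-parity'' function well-defined on the vertex set, and then check that this partition is non-trivial. I will not do this via explicit cohomology, only via an elementary homotopy argument, which is what is wanted here.

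First, for any edge-path $p$ in ${\mathcal K}_{({\mathcal F},\Gamma)}$ between two vertices, let $n(p) \in \mathbb{Z}/2\mathbb{Z}$ denote the parity of the number of $E$-cuts traversed by $p$ (the edge $E$ itself is an elementary $E$-cut by Definition \ref{cut}, since it is a nucleus of ${\mathfrak P}(E)$, so the edges crossed are counted with multiplicity). I would show that $n(p)$ depends only on the endpoints of $p$. Since ${\mathcal K}_{({\mathcal F},\Gamma)}$ is simply connected (being the universal cover of $K_{({\mathcal F},\Gamma)}$), any two edge-paths with the same endpoints are related by a finite sequence of elementary moves of two types: (i) insertion/deletion of a backtrack along a $1$-cell, which changes the count of traversed cuts by $0$ or $2$; (ii) replacement of one part of the boundary of a $2$-cell (a square) by the complementary part. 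By Proposition \ref{0 ou 2}, each square has an even number of $E$-cuts in its boundary, so move (ii) also changes the count of cuts by an even integer. Hence $n(p) \bmod 2$ is a well-defined function of the endpoints.

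Given this, I would define $VH_R(E)$ (resp. $VH_L(E)$) to be the set of vertices $v$ such that $n(p)=0$ (resp. $n(p)=1$) for some, and therefore every, edge-path $p$ from the right vertex $t(E)$ of $E$ to $v$. By construction these two sets are disjoint and their union is all of $V({\mathcal K}_{({\mathcal F},\Gamma)})$, so they form a partition of the vertex set.

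For non-emptiness: the vertex $t(E)$ itself lies in $VH_R(E)$ via the empty path. The vertex $i(E)$ is joined to $t(E)$ by the single edge $E$, which is a cut, so $n=1$ and $i(E) \in VH_L(E)$; in particular both sides are non-empty. Finally, the chosen identification of $G^\sigma_{n,k}$ with the orbit of a base vertex (established in Section \ref{suspensions}) transports this partition of vertices to a partition of the group into two non-empty disjoint sets, which is the second statement of the corollary. The only real input is Proposition \ref{0 ou 2}; everything else is the standard observation that an even cocycle on the $2$-cells of a simply connected $2$-complex integrates to a $\mathbb{Z}/2\mathbb{Z}$-valued function on vertices.
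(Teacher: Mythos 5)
Your proposal is correct and is exactly the argument the paper has in mind: the paper gives no written proof, declaring the corollary a ``straightforward consequence'' of Proposition \ref{0 ou 2}, and the intended content is precisely your observation that an even parity condition on the boundaries of the squares of the simply connected complex ${\mathcal K}_{({\mathcal F},\Gamma)}$ makes the cut-parity of an edge-path depend only on its endpoints. Your non-emptiness check (using that $E$ itself is an elementary $E$-cut, so $i(E)$ and $t(E)$ land on opposite sides) is also the right one and is consistent with the paper's later use of $E$ as a horizontal elementary cut in Lemma \ref{il fallait y penser}.
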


\subsection{Definition and stabilizer}

\begin{lemma}
The partition given by Corollary \ref{youyou} is left $G^\sigma_{n,k}$-invariant.
\end{lemma}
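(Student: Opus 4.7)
The plan is to reduce the left $G^\sigma_{n,k}$-invariance of the unordered partition $\{VH_L(E), VH_R(E)\}$ to the left $G^\sigma_{n,k}$-invariance of the set $\mathcal C$ of all $E$-cuts, viewed as a subset of the $1$-cells of ${\mathcal K}_{({\mathcal F},\Gamma)}$. Since $G^\sigma_{n,k}$ acts cellularly and, by Proposition \ref{0 ou 2}, the parity of cuts crossed by an edge-path between two vertices is well defined, once $g\mathcal C=\mathcal C$ one has that $v,w$ are joined by an even (resp.\ odd) parity path iff $gv,gw$ are. The unordered partition is therefore preserved, with $g$ either fixing each class or swapping them according to the parity of cuts between the right vertex of $E$ and its image under $g$.

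First I would show that the horizontal $E$-cuts are $G^\sigma_{n,k}$-invariant. By Definition \ref{cut} these are the left ${\mathfrak R}(E)$-translates of the nuclei of the $E$-biblock ${\mathfrak P}(E)$, augmented by the exceptional $1$-cells as explained in the sketch. Since the nuclei are themselves EoE-cells, it suffices to show that the ${\mathfrak R}(E)$-orbit of any nucleus $N$, taken together with the exceptional cells, equals $G^\sigma_{n,k}\cdot N$ together with $G^\sigma_{n,k}\cdot(\text{exceptional cells})$. The inclusion $\subseteq$ is automatic; for $\supseteq$ one uses that ${\mathfrak R}(E)$ is generated by block-lifts, block-stairs and $E$-horizontal generators (Definition \ref{diagonalsubgroup}), and invokes Lemma \ref{a savoir} together with the tied/one-sided hypothesis to express any $g\in G^\sigma_{n,k}$, modulo the horizontal $E$-stabilizer ${\mathfrak S}_{hor}(E)$ of Lemma \ref{horEstab}, as a product of such generators.

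Next, for vertical cuts, the plan is to upgrade the ${\mathfrak R}(E)$-invariant admissible colouring given by Lemma \ref{four}(1) to a $G^\sigma_{n,k}$-equivariant colouring of the whole $0$-skeleton. I would define the $i$-th coordinate of the colour at a vertex $v$ to be the parity of the number of horizontal cuts in a fixed ``$i$-layer'' of the $G^\sigma_{n,k}$-orbit (say, $G^\sigma_{n,k}\cdot E^i_{top}$) crossed by any edge-path from $v$ to a chosen base vertex; a coordinate-wise refinement of Proposition \ref{0 ou 2} makes this well defined, and the result is manifestly $G^\sigma_{n,k}$-equivariant. By Lemma \ref{four}(1) it agrees on $\overline{{\mathfrak R}(E).{\mathfrak P}(E)}$ with the original admissible colouring, so the set of vertical elementary cuts — the $t_i$-cells with positively coloured initial vertex — is $G^\sigma_{n,k}$-invariant, and consequently so is the whole set $\mathcal C$.

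The principal obstacle is the first step: ${\mathfrak R}(E)$ is properly contained in $G^\sigma_{n,k}$ yet its orbit on the nuclei must fill out the entire $G^\sigma_{n,k}$-orbit. This is precisely where the tied, one-sided hypothesis is indispensable, through Corollary \ref{toutcapourca} and Lemma \ref{fondamental}: those results ensure that the block-stairs and block-lifts of Definition \ref{atomion} interact consistently, so that their iterated action covers every $\F{n}$-translate within each horizontal tree ${\mathcal T}_w$ while the lifts themselves traverse every cylinder of ${\mathcal K}_{({\mathcal F},\Gamma)}$.
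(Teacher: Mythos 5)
You have proved (or rather, set out to prove) the wrong statement. The lemma does not claim that a single partition $(VH_L(E),VH_R(E))$ is preserved by every element of $G^\sigma_{n,k}$; it claims that the \emph{assignment} $E\mapsto W(E)$ is equivariant, i.e.\ $g.W(E)=W(g.E)$, so that the collection of all diagonal walls (Definition \ref{definition de mur diagonal}) is left $G^\sigma_{n,k}$-invariant --- which is exactly what Proposition \ref{action} needs. Your reading, under which the set $\mathcal C$ of $E$-cuts satisfies $g\mathcal C=\mathcal C$ for all $g\in G^\sigma_{n,k}$, is false and cannot be rescued: Lemma \ref{stabilisateur diagonal} identifies the left-stabilizer of $W(E)$ as the \emph{proper} subgroup ${\mathfrak R}(E)$, and Lemma \ref{il fallait y penser} shows that for a topmost $E$ the wall $W(E)$ has exactly one horizontal cut in each horizontal tree, so its cut set is certainly not invariant under the horizontal subgroup $\F{n}$. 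Your first step --- arguing that the ${\mathfrak R}(E)$-orbit of a nucleus ``fills out the entire $G^\sigma_{n,k}$-orbit'' --- is precisely the point at which the argument fails; if it were true, all the walls $W(E')$ for $E'$ in a given $G^\sigma_{n,k}$-orbit of EoE-cells would coincide, leaving only finitely many diagonal walls up to label and destroying the properness established in Proposition \ref{proprete}.

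What actually has to be checked, and what the paper checks, is much more modest: the construction of $W(E)$ from $E$ is canonical up to one choice. The white $E$-block depends only on the sets $Bot_i(E^i_{top})$, and these commute with the left action ($Bot_i(g.E)=g.Bot_i(E)$ because the action is by deck transformations and preserves labels), so the white block, the colors, and the elementary cuts are carried to those of $g.E$. The only non-equivariant ingredient is the choice of a highest bridge (equivalently, of a $Z$-companion) in Definition \ref{oprotein}; one must verify that a different admissible choice yields the same set ${\mathfrak R}(E).{\mathfrak P}(E)$, hence the same cut set and the same partition. The paper does this by observing that all the other mirrors with positive admissible color already lie in the replicative subgroup. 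If you rewrite your argument as a proof of $g.W(E)=W(g.E)$ along these lines, the parity bookkeeping in your first paragraph (via Proposition \ref{0 ou 2}) is then a correct way to conclude that $g$ carries the partition $(VH_L(E),VH_R(E))$ to $(VH_L(g.E),VH_R(g.E))$ up to swapping the two sides.
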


\begin{proof}
The construction of the white $E$-block only depends on the sets $Bot^i(E)$, and we obviously have $Bot^i(g.E) = g.Bot^i(E)$. The only choice in the construction of the $E$-biblock is the choice of a highest bridge which amounts to a choice of a mirror. Since there are all the other mirrors with positive admissible color in the replicative subgroup, this choice does not modify ${\mathfrak R}(E).{\mathfrak P}(E)$. 
\end{proof}

\begin{definition} 
\label{definition de mur diagonal}
Let $E$ be an horizontal EoE-cell of ${\mathcal K}_{({\mathcal F},\Gamma)}$. The {\em diagonal $E$-wall $W(E)$} is the partition $(VH_L(E),VH_R(E))$ given by Corollary \ref{youyou}. The {\em collection of all the diagonal walls} denotes the union, over all the EoE-cells in  ${\mathcal K}_{({\mathcal F},\Gamma)}$, of all the diagonal $E$-walls. The {\em label of $W(E)$} is the label of $E$.
\end{definition}

\begin{lemma} 
\label{stabilisateur diagonal}
With the assumptions and notations of Definition \ref{definition de mur diagonal}:  the left-stabilizer of $W(E)$ is the $E$-replicative subgroup ${\mathfrak R}(E)$.
\end{lemma}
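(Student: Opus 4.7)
The plan is to establish both inclusions separately: the forward inclusion ${\mathfrak R}(E) \subseteq \mathrm{Stab}(W(E))$ and the reverse inclusion $\mathrm{Stab}(W(E)) \subseteq {\mathfrak R}(E)$.

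For the forward inclusion, I would proceed as follows. By Definition \ref{cut}, the set of $E$-cuts is by construction the orbit ${\mathfrak R}(E) \cdot (\text{elementary } E\text{-cuts})$, so every element of ${\mathfrak R}(E)$ permutes this set. By Proposition \ref{0 ou 2}, every $2$-cell of ${\mathcal K}_{({\mathcal F},\Gamma)}$ has an even number of $E$-cuts in its boundary, so the parity of cuts crossed along any edge-path depends only on its endpoints. Consequently each $r \in {\mathfrak R}(E)$ either preserves the partition $(VH_L(E), VH_R(E))$ or swaps its two sides. To exclude the swap, it suffices to verify for each of the three families of generators of ${\mathfrak R}(E)$ that the right vertex of $E$ is sent into $VH_R(E)$. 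For a block-lift this is read off from Lemma \ref{cfm}(1): the $i$-square relating $E$ to its nucleus $E^l_{top}$ has both horizontal edges as cuts and no vertical cuts, so parity is zero. For an $E$-horizontal generator $h_{\gamma, \gamma^{tw}}$ with non-vanishing admissible color, an analogous computation using Lemma \ref{cfm}(2) shows that an edge-path along $\mathfrak P(E)$ realising this shift crosses an even number of cuts. For a block-stair, Lemma \ref{cfm}(2)-(3) together with Lemma \ref{lemme a la con}(3) gives the same conclusion, with the two horizontal nuclei at the endpoints being balanced by the mirrors attached to the bridge.

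For the reverse inclusion, take $g \in \mathrm{Stab}(W(E))$. Since $g$ preserves the set of $E$-cuts and since the cell $E$ itself is the elementary horizontal cut realised by the nucleus of the white $E$-block, $g \cdot E$ is a horizontal cut, hence of the form $r \cdot E'$ with $r \in {\mathfrak R}(E)$ and $E'$ a nucleus of ${\mathfrak P}(E)$. Replacing $g$ by $r^{-1} g$ we may assume $g \cdot E = E'$, and by applying a further element of ${\mathfrak R}(E)$ (a suitable composition of block-lifts and block-stairs mapping one nucleus of ${\mathfrak P}(E)$ to another) we reduce to the case $g \cdot E = E$. I would then argue that $g$ must fix a whole fundamental domain of ${\mathfrak R}(E)$ acting on the set of cuts: the admissible colour structure of Lemma \ref{four}(1), combined with the one-sided and tied hypotheses (via Corollary \ref{toutcapourca}), means $g$ is forced to preserve the positions of the companion nuclei, the bridges, and the vertical cuts in ${\mathfrak P}(E)$. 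Finally, an element of $G^\sigma_{n,k}$ fixing $E$ together with the structure of ${\mathfrak P}(E)$ pointwise must be trivial, so the original $g$ lies in ${\mathfrak R}(E)$.

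The main obstacle is the reverse inclusion. The subtle point is to rule out the existence of a ``spurious'' element outside ${\mathfrak R}(E)$ which nevertheless permutes the set of cuts and preserves the two sides of $W(E)$. The rigidity needed to rule this out comes precisely from the one-sided and tied assumptions on $({\mathcal F}, \Gamma)$: these ensure, through Corollary \ref{toutcapourca} and Lemma \ref{four}, that the admissible colouring and the geometric arrangement of bridges and nuclei leave no freedom beyond what ${\mathfrak R}(E)$ already provides. Managing the bookkeeping of which coordinates of the $k$-color are positive where, and matching this against the companion structure of Definition \ref{oprotein}, is the most delicate part of the argument.
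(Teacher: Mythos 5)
Your forward inclusion is the same argument as the paper's, which consists of a single sentence: the set of $E$-cuts is by construction a union of ${\mathfrak R}(E)$-orbits, hence ${\mathfrak R}(E)$-invariant, and since (by Proposition \ref{0 ou 2}) the two sides of $W(E)$ are determined by parity of cuts crossed, ${\mathfrak R}(E)$ preserves the partition. Your additional worry about an element \emph{swapping} the two sides is not needed for membership in the stabilizer: a wall is an unordered partition into two classes, so an element exchanging $VH_L(E)$ and $VH_R(E)$ still stabilizes $W(E)$. Where you genuinely diverge from the paper is the reverse inclusion $\mathrm{Stab}(W(E))\subseteq{\mathfrak R}(E)$: the paper's proof does not address it at all, so your attempt is extra content rather than a reconstruction. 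Your opening observation there is sound (a $1$-cell joins the two sides if and only if it is a cut, so a stabilizing element permutes the cut set), but the argument then has two gaps. First, the reduction to $g\cdot E=E$ presupposes that ${\mathfrak R}(E)$ acts transitively on the set of horizontal $E$-cuts, i.e.\ that all nuclei of the biblock (including those of the companion block, which are reached from the white block only through stairs and mirrors) lie in a single ${\mathfrak R}(E)$-orbit together with $E$; you assert this without verification. Second, once $g\cdot E=E$ is achieved, your appeal to ``fixing a fundamental domain'' and the admissible colouring is both vague and unnecessary: the left action of $G^\sigma_{n,k}$ on ${\mathcal K}_{({\mathcal F},\Gamma)}$ is by deck transformations, so $1$-cells have trivial stabilizers (this is exactly the fact the paper invokes at the end of the proof of Proposition \ref{finitude des diagonal}), and $g\cdot E=E$ forces $g=e$ immediately. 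With the transitivity claim actually checked and the ending replaced by the freeness argument, your reverse inclusion would be a clean complement to the paper's one-line proof.
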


\begin{proof}
The two sides of $W(E)$ are separated by the set of $E$-cuts which is left ${\mathfrak R}(E)$-invariant. \end{proof}

\subsection{Finiteness}

\begin{proposition} 
\label{finitude des diagonal}
Any two elements of $G^\sigma_{n,k}$ are separated by a finite number of diagonal walls.
\end{proposition}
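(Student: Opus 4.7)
The plan is to fix a finite edge-path $\gamma$ from $g_1$ to $g_2$ in the $1$-skeleton of ${\mathcal K}_{({\mathcal F},\Gamma)}$ and to bound, uniformly in $c$, the number of distinct diagonal walls for which a given $1$-cell $c$ of $\gamma$ is a cut. By Corollary \ref{youyou}, every diagonal wall $W(E)$ separating $g_1$ from $g_2$ must contain at least one $E$-cut among the finitely many $1$-cells of $\gamma$, so such a bound suffices.

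My first task would be to extract a uniform constant $N = N({\mathcal F},\Gamma)$ such that, for every horizontal EoE-cell $E$ of ${\mathcal K}_{({\mathcal F},\Gamma)}$, the biblock ${\mathfrak P}(E)$ has combinatorial diameter at most $N$ in the $1$-skeleton. This is a bookkeeping exercise going through the layered Definitions \ref{topshift}--\ref{oprotein}: the sets $Bot_i(E^i_{top})$, $Top_j(E)$, $B(E)$, and the companion piece all arise from finitely iterated applications of the lifts $F_i$, whose reduced images on edges have uniformly bounded length because $\Gamma$ is finite. The desired consequence is that every elementary $E$-cut in the sense of Definition \ref{cut} — whether a nucleus of ${\mathfrak P}(E)$ or a $t_j$-edge incident at a positive-color vertex of ${\mathfrak P}(E)$ — lies within combinatorial distance $N$ of $E$.

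Given a $1$-cell $c$ of $\gamma$ that is an $E$-cut, I would write $c = r \cdot c_0$ with $r \in {\mathfrak R}(E)$ and $c_0$ an elementary $E$-cut, so that $c_0$ is within distance $N$ of $E$. Since $G^\sigma_{n,k}$ acts by isometries on the $1$-skeleton of ${\mathcal K}_{({\mathcal F},\Gamma)}$, the cell $E^\ast := r.E$ lies within distance $N$ of $c$. The entire construction of Section \ref{diagonal walls} is manifestly $G^\sigma_{n,k}$-equivariant (every ingredient — nuclei, c-tops and c-bottoms, bridges and mirrors, admissible colors, block-lifts, block-stairs, horizontal generators, and hence the full cut set — is built from left $G^\sigma_{n,k}$-translations), so $W(r.E) = r \cdot W(E)$. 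By Lemma \ref{stabilisateur diagonal}, $r \in {\mathfrak R}(E)$ stabilizes $W(E)$, giving $W(E^\ast) = W(E)$. Thus the distinct walls admitting $c$ as a cut are indexed by EoE-cells $E^\ast$ lying in the ball of radius $N$ around a vertex of $c$.

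Since the $1$-skeleton of ${\mathcal K}_{({\mathcal F},\Gamma)}$ has uniformly bounded valency (the horizontal valency is bounded by the maximal valency in $\Gamma$, and each vertex has exactly $2k$ incident vertical $t_i^{\pm 1}$-edges), this ball contains finitely many $1$-cells, hence only finitely many EoE-cells. Summing this uniform bound over the finitely many $1$-cells of $\gamma$ yields the claimed finiteness. The main obstacle I anticipate is establishing the uniform biblock-diameter bound cleanly, since ${\mathfrak P}(E)$ is assembled from several nested operations (the passage from $Bot_i$ through $Top_j$ and $B$ to the companion); however each of these multiplies lengths only by constants depending on $({\mathcal F},\Gamma)$ rather than on $E$, so a careful induction on the construction should yield $N$.
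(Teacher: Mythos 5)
Your argument is correct, and it shares the paper's core reduction: a diagonal wall separating two elements must place a cut on any connecting edge-path (via Corollary \ref{youyou} and the parity invariance from Proposition \ref{0 ou 2}), so it suffices to bound the number of walls for which a fixed $1$-cell is a cut. Where you diverge is in the finiteness mechanism for that last step. The paper argues by pigeonhole: it writes the given cell as $g.E_0$ with $E_0$ ranging over the finite set $S$ of elementary cuts attached to the finitely many EoE-cells of a fundamental region, and rules out infinitely many such expressions because $1$-cells of ${\mathcal K}_{({\mathcal F},\Gamma)}$ have trivial stabilizer under the left $G^\sigma_{n,k}$-action, so for each $E_0 \in S$ there is at most one translating element. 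You instead localize the defining cell: using a uniform bound $N$ on the diameter of the biblocks you pull the EoE-cell $E^\ast = r.E$ into the radius-$N$ ball around the cut, invoke the equivariance $W(r.E)=r.W(E)$ (the unnamed invariance lemma before Definition \ref{definition de mur diagonal}) together with Lemma \ref{stabilisateur diagonal}, and conclude by local finiteness of the complex. Both routes are sound; the paper's avoids any metric estimate on ${\mathfrak P}(E)$ but leans on freeness of the action on cells, while yours needs the uniform diameter bound — which is legitimate here, since the construction is equivariant, there are finitely many orbits of EoE-cells, and the paper itself extracts exactly such a constant in Lemma \ref{C} — and in exchange treats horizontal and vertical cuts in one stroke and gives the slightly stronger, more geometric statement that every wall cut by $c$ is of the form $W(E^\ast)$ with $E^\ast$ uniformly close to $c$.
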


\begin{proof}
Since any $g \in G^\sigma_{n,k}$ is connected to $e$ by a finite edge-path, it is sufficient to prove that any EoE-cell spans only a finite number of diagonal walls. Since the collection of walls is $G^\sigma_{n,k}$-invariant, any EoE-cell is the left $G^\sigma_{n,k}$-translate of an EoE-cell in a given fundamental region for the action of $G^\sigma_{n,k}$: there are a finite number of such $1$-cells. Moreover, a $1$-cell $E^\prime$ spans a diagonal $E$-wall if and only if $E^\prime$ is the left $g$-translate of some elementary $E$-cut with $g \in {\mathfrak R}(E)$ and there are a finite number of such elementary $E$-cuts. Assume that $E^\prime$ spans an infinite number of diagonal walls. We so get an infinite sequence of elements $g_{i_j} \in G^\sigma_{n,k}$ and of cells $E_{i_j}$ such that $E^\prime = g_{i_j} E_{i_j}$ where the $E_{i_j}$ are in the set $S$ formed by the union over the EoE-cells $E$ in the fundamental region of all the elementary $E$-cuts. But this set $S$ is finite. Thus, since the sequence $(g_{i_j})$ is assumed to be infinite, we get two distinct elements $g_{i_l}, g_{i_m}$ and an elementary cut $E_0$ with $g_{i_l} . E_0 = g_{i_m} . E_0 = E^\prime$. This implies $g^{-1}_{i_m} . g_{i_l}. E_0  = E_0$. This is impossible since the $1$-cells in  ${\mathcal K}_{({\mathcal F},\Gamma)}$ have trivial stabilizer for the left $G^\sigma_{n,k}$-action.
\end{proof}

\section{Conclusion for suspensions of unipotent free subgroups of $\Out{\F{n}}$}
\label{conclusion unipotent}

\begin{definition}
A {\em collection of vertizontal walls for ${\mathcal K}_{({\mathcal F},\Gamma)}$} is the union of the collection of vertical walls given by Definition \ref{mur vertical} with the collection of diagonal walls given by Definition \ref{definition de mur diagonal}. 
\end{definition}

Lemmas \ref{stabilisateur vertical}, \ref{stabilisateur diagonal} on one hand, Propositions \ref{vertical} and \ref{finitude des diagonal} on the other one give the following

\begin{proposition}
\label{action}
If $\mathcal W$ is a collection of vertizontal walls for ${\mathcal K}_{({\mathcal F},\Gamma)}$, $({\mathcal K}_{({\mathcal F},\Gamma)},\mathcal W)$ is a space with walls structure for the left-action of $G^\sigma_{n,k}$ on ${\mathcal K}_{({\mathcal F},\Gamma)}$. 
\end{proposition}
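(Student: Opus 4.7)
The plan is to verify directly the three conditions that together define a space-with-walls structure equipped with a group action: (i) $\mathcal W$ consists of genuine walls, i.e.\ partitions of the vertex set of ${\mathcal K}_{({\mathcal F},\Gamma)}$ into two non-empty classes; (ii) for any two distinct vertices $g,g'$ of ${\mathcal K}_{({\mathcal F},\Gamma)}$, the wall-distance $\mu_{\mathcal W}(g,g')$ is finite; (iii) the left-action of $G^\sigma_{n,k}$ permutes $\mathcal W$. All three conditions have already been essentially checked piecewise in the preceding sections; what remains is to assemble them.

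First I would treat condition (i) by inspecting each family. The vertical walls are non-trivial partitions by construction (Definition \ref{mur vertical} together with the statement in Lemma \ref{orbit-map} that the pre-image of any $1$-cell of $T^\infty_k$ disconnects ${\mathcal K}_{({\mathcal F},\Gamma)}$ into exactly two components, each containing vertices). The diagonal walls are non-trivial partitions by Corollary \ref{youyou}. For condition (iii), I would note that Lemma \ref{stabilisateur vertical} already gives the $G^\sigma_{n,k}$-invariance of the collection of vertical walls. For diagonal walls, the construction of the $E$-biblock ${\mathfrak P}(E)$, of the admissible color, of ${\mathfrak R}(E)$, and hence of the set of $E$-cuts, is manifestly equivariant with respect to the left-action: if $g \in G^\sigma_{n,k}$, then $g.{\mathfrak P}(E) = {\mathfrak P}(g.E)$ up to the harmless choice of a highest bridge (already shown irrelevant in the lemma preceding Definition \ref{definition de mur diagonal}), and therefore $g.W(E) = W(g.E)$. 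Since $g.E$ is again a horizontal EoE-cell, $g.W(E)$ lies in the collection of diagonal walls, giving $G^\sigma_{n,k}$-invariance of this collection as well.

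For condition (ii), I would combine the two finiteness statements already proved. Given two distinct vertices $g,g' \in G^\sigma_{n,k}$, Proposition \ref{vertical} provides a finite number $N_v$ of vertical walls separating them, and Proposition \ref{finitude des diagonal} provides a finite number $N_d$ of diagonal walls separating them. Since the collection of vertizontal walls is the union of the two families,
\[
\mu_{\mathcal W}(g,g') \;\le\; N_v + N_d \;<\; \infty,
\]
which is exactly what is required.

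The main obstacle, if any, is the equivariance verification for diagonal walls, which must be argued carefully because the definition of $W(E)$ passes through many intermediate pieces (the colorings, the biblocks, the replicative subgroup, etc.). I expect this to reduce to observing that each of these intermediate constructions commutes with the left-action — the maps $Bot_i$, $Bot$, $Top$, the enabled/disabled dichotomy in Definition \ref{appariement}, the choice of mirrors, and the subgroups ${\mathfrak H}(E), {\mathfrak D}(E), {\mathfrak R}(E)$ are all determined by combinatorial data that transport equivariantly from $E$ to $g.E$ — so that the left ${\mathfrak R}(E)$-translates of the elementary $E$-cuts are sent by $g$ to the left ${\mathfrak R}(g.E)$-translates of the elementary $(g.E)$-cuts. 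Once this is recorded, the assembly of the three conditions completes the proof of the proposition.
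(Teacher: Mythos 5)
Your proposal is correct and follows essentially the same route as the paper: the paper proves Proposition \ref{action} precisely by assembling the invariance statements (Lemma \ref{stabilisateur vertical} and the invariance lemma preceding Definition \ref{definition de mur diagonal}, together with Lemma \ref{stabilisateur diagonal}) with the two finiteness statements (Propositions \ref{vertical} and \ref{finitude des diagonal}), just as you do. Your additional explicit check that each wall is a genuine two-sided partition (via Lemma \ref{orbit-map} and Corollary \ref{youyou}) is implicit in the paper but harmless to spell out.
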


We now prove the following result: 

\begin{proposition}
\label{proprete}
If $\mathcal W$ is a collection of vertizontal walls for ${\mathcal K}_{({\mathcal F},\Gamma)}$, the left $G^\sigma_{n,k}$-action on the space with walls structure $({\mathcal K}_{({\mathcal F},\Gamma)},\mathcal W)$ is proper.
\end{proposition}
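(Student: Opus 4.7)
The strategy is to show the wall-distance $\mu_{\mathcal W}(x_0, g \cdot x_0)$ tends to infinity with $|g|_{G^\sigma_{n,k}}$ by separately controlling the vertical and diagonal contributions in terms of the decomposition $g = hv$ with $h \in \F{n}$ horizontal and $v \in \F{k}$ vertical; note that any divergent sequence $g_n \to \infty$ forces $|h_n|_{\F{n}} + |v_n|_{\F{k}} \to \infty$. The vertical contribution is immediate from Lemma \ref{pas con}: the vertical walls separating $x_0$ from $g \cdot x_0$ are in bijection with the classical $\F{k}$-walls between $e$ and $v$, and so number exactly $|v|_{\F{k}}$. Hence any sequence with $|v_n|_{\F{k}} \to \infty$ is already handled by vertical walls alone, and it suffices to treat the regime $|v_n|_{\F{k}}$ bounded, $|h_n|_{\F{n}} \to \infty$.

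In this regime, translating by the bounded vertical element $v_n^{-1}$ and invoking the $G^\sigma_{n,k}$-invariance of $\mathcal W$ reduces the problem, up to a uniformly bounded additive error, to the case $g = h \in \F{n}$. Consider the unique horizontal geodesic $p$ in ${\mathcal T}_e$ from $x_0$ to $h \cdot x_0$; since $\Gamma$ is well-built (Lemma \ref{wb}), $p$ crosses exactly $|h|_{\F{n}}$ essential $1$-cells. By Lemma \ref{a savoir}, each such essential cell $E^\prime$ is realized as the nucleus of the white block of some diagonal wall $W(E_0)$ (for a suitable EoE-cell $E_0$ and a choice of index $l \in \{1,\ldots,k\}$); consequently $E^\prime$ is an elementary $E_0$-cut in the sense of Definition \ref{cut}, and hence a horizontal cut of $W(E_0)$.

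The main obstacle is to promote this abundance of horizontal cuts on $p$ to an abundance of \emph{separating} diagonal walls, since two essential cells on $p$ could a priori be $E_0$-cuts of the same wall and cancel in parity. The tied, one-sided hypothesis handles this through Lemma \ref{horEstab}: the horizontal $E_0$-stabilizer ${\mathfrak S}_{hor}(E_0)$ is generated by conjugates of horizontal $E_0$-generators by matching-index block-stairs or block-lifts, and one checks via Corollary \ref{toutcapourca} that its orbits of elementary $E_0$-cuts intersect ${\mathcal T}_e$ in a sparse pattern governed by an infinite-index subgroup of $\F{n}$, so that at most a bounded number of $E_0$-cuts can occur on $p$ per wall. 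Combined with the $G^\sigma_{n,k}$-invariance of $\mathcal W$, which restricts attention to finitely many orbit-types of EoE-cells in a fundamental domain, a pigeonhole/orbit-counting argument produces a linear lower bound on the number of separating diagonal walls in terms of $|h|_{\F{n}}$. This, together with the vertical count, yields $\mu_{\mathcal W}(x_0, g_n \cdot x_0) \to \infty$ in all regimes, proving properness.
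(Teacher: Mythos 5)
Your reduction to the horizontal case is the same as the paper's (Lemmas \ref{trivialite} and \ref{un oubli} there), and the observation that every essential cell crossed by the horizontal geodesic is an elementary cut of some diagonal wall is also how the paper begins. But the core step --- converting ``many cuts on $p$'' into ``many walls separating the endpoints of $p$'' --- is where your argument has a genuine gap. First, your key claim that ``at most a bounded number of $E_0$-cuts can occur on $p$ per wall'' is not established and is false in general: the horizontal stabilizer ${\mathfrak S}_{hor}(E_0)$ is typically an infinite subgroup of $\F{n}$ whose translates of the biblock tile an unbounded region of ${\mathcal T}_e$ through adjacent copies (Lemma \ref{facilemaisimportant}), so a single geodesic can cross arbitrarily many cuts of a single wall. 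Second, and more fundamentally, even if such a bound held it would not suffice: a wall crossed a positive \emph{even} number of times does not separate the endpoints of $p$, so a pigeonhole count of ``walls crossed at least once'' gives no lower bound on $\mu_{\mathcal W}$. You correctly name the parity problem, but the resolution you propose does not address it.

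What the paper actually uses at this point is the filtration height, which is absent from your argument. Lemma \ref{il fallait y penser} shows that a topmost EoE-cell is the \emph{unique} cut of its wall in its horizontal tree, so its wall is certainly spanned; Lemma \ref{easy} shows that a non-topmost cell $E$ is separated from any other horizontal cut of $W(E)$ by an EoE-cell \emph{strictly higher} than $E$; and Lemma \ref{C} makes this quantitative: if no strictly higher cell occurs within distance $C$ (the maximal block diameter) after $E$, then $W(E)$ is spanned with odd parity. Since the height takes at most $EoE(\Gamma)$ values, every window of length $EoE(\Gamma)\cdot C$ along $\omega$ contains a cell that is either topmost or ``locally highest'', producing one genuinely separating wall per window and hence the linear lower bound $\mu_{\mathcal W}\geq |h|/(2\,EoE(\Gamma)\,C)$. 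This height-driven selection of odd-parity walls is the missing idea; note also that the legitimacy of arguing with a single edge-path at all rests on Proposition \ref{0 ou 2} (each square has an even number of cuts in its boundary, so the parity of cuts crossed depends only on the endpoints), which your write-up uses implicitly but never invokes.
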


\begin{proof}

Any $g \in G^\sigma_{n,k}$ is uniquely written as $h.v$ where $h \in \F{n}$ is a horizontal element and $v \in \F{k}$ a vertical one. 

\begin{lemma}
\label{trivialite}

Let $h_i v_i \in G^\sigma_{n,k}$ be an infinite sequence of elements such that the lengths of the $v_i$ tends toward infinity with $i$. Then the number of vertizontal walls which separate $e$ from $h_i v_i$ also tends toward infinity with $i$.
\end{lemma}

\begin{proof}
The vertical walls, which form a subset of the vertizontal walls, are the classical walls for the vertical free group $\F{k}$, hence the conclusion.
\end{proof}

It is thus sufficient to prove that for any infinite sequence of horizontal elements $h_i$ in $G^\sigma_{n,k}$ whose lengths tend toward infinity with $i$, the number of vertizontal walls separating $e$ from $h_i$ also tends toward infinity with $i$. Any horizontal element $h_i$ is represented in the $1$-skeleton of ${\mathcal K}_{({\mathcal F},\Gamma)}$ by a horizontal geodesic starting at $e$ reading $h_i$. 

\begin{lemma}
\label{un oubli}
Let $(h_l)$ be any infinite sequence of horizontal elements and let $(\omega_l)$ be the associated sequence of horizontal geodesics starting at $e$. The number of vertizontal walls spanned by $(h_l)$ tends toward infinity with the number of vertizontal walls spanned by $(\omega_l)$. 
\end{lemma}

\begin{proof}
If $C$ is the diameter of a fundamental region, any geodesic of length $C$ crosses at least once an EoE-cell. Moreover, by Proposition \ref{0 ou 2}, the parity of the number of $E$-cuts crossed by a horizontal geodesic only depends on the element it defines since any relator contains two cuts in its boundary. Hence the conclusion.
\end{proof}

We are now going to prove that the number of diagonal walls spanned by a horizontal geodesic is bounded below by a constant times the length of the geodesic. The easy case is the case of the {\em topmost diagonal walls}:

\begin{lemma}
\label{il fallait y penser}
 Let $E$ be an EoE-cell. If $E$ is topmost then there are no other cut of the diagonal wall $W(E)$ in the same horizontal tree. In particular, any horizontal geodesic which contains a topmost EoE-cell $E$ spans the diagonal wall $W(E)$ and so a vertizontal wall.
\end{lemma}

\begin{proof}
If $E$ is topmost, the only horizontal elementary cut for $W(E)$ is $E$ and the stabilizer of $W(E)$ is the replicative subgroup ${\mathfrak R}(E)$ which is generated by the $(i,E)$-lifts, exactly one for each $i \in \{1,\cdots,k\}$. Since each one has the form $h_i t_i$, the lemma follows. 
\end{proof}

\begin{lemma}
\label{easy}
Let $E$ be an EoE-cell which is not $i$-topmost for some $i \in \{1,\cdots,k\}$. Then $E$ is separated from any other horizontal $E$-cut of $W(E)$ by an EoE-cell strictly higher than $E$.
 
 \end{lemma}

 \begin{proof}
This is a consequence of the fact that all the $E$-horizontal generators are separated from $E$ by a bridge which has greater height than $E$. For the other generators of ${\mathfrak S}_{hor}(E)$ this is clear.
 \end{proof}

\begin{lemma}
\label{C}
There exists a constant $C \geq 1$ such that, if $\omega$ is a horizontal geodesic of length greater than $C$ with first EoE-cell $E$, then if the geodesic subsegment of length $C$ starting with $E$ in $\omega$ does not contain a EoE-cell strictly higher than $E$ then $\omega$ spans the diagonal wall $W(E)$.
\end{lemma}

\begin{proof}
Let $C$ be the maximum of the diameters of all the $E$-blocks, for the EoE-cells $E$ in a fundamental region for the action of $G^\sigma_{n,k}$. Assume that the initial segment of length $C$ in $\omega$ does not contain an EoE-cell strictly higher than $E$. Then Lemma \ref{C} is a direct consequence of Lemma \ref{easy}.
\end{proof}

We now complete the proof of Proposition \ref{proprete}. Let $C$ be the constant given by Lemma \ref{C}. Let $\omega$ be a horizontal geodesic with first EoE-cell $E$. By Lemma \ref{C}, either at distance at most $EoE(\Gamma) . C$ in $\omega$ we find some topmost cut $E^\prime$ or at distance at most $(EoE(\Gamma) -1). C$ we find a EoE-cell $E^\prime$ for which there is no higher EoE-cell further in $\omega$ at distance less than $C$. In the first case, Lemma \ref{il fallait y penser} gives that $\omega$ spans $W(E^\prime)$, in the second case, Lemma \ref{C} gives the same conclusion. We then iterate by considering the first EoE-cell after $E^\prime$ which is not a cut for the walls crossed between $E$ and $E^\prime$. We find such a cut at distance at most $EoE(\Gamma).C$ from $E^\prime$ in $\omega$. We so eventually get that the total number of diagonal walls spanned by $\omega$, and so the vertizontal wall distance between its endpoints, is at least $1/(2 EoE(\Gamma).C)$ times the length of $\omega$, that is the distance between its endpoints. Proposition \ref{proprete} follows from Lemma \ref{un oubli} and from Lemma \ref{trivialite} which gives the analogous statement for the vertizontal wall distance with respect to the vertical $\F{k}$-distance.
\end{proof}

Propositions \ref{action} and \ref{proprete} together give the following

\begin{theorem}
\label{interlude}
The suspension of a finite rank free group by a tied, one-sided unipotent finite rank free subgroup of outer automorphisms is a-T-menable in the sense of Gromov.
\end{theorem}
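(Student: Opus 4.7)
The plan is to assemble the pieces already proved in Sections~4--6 and then appeal to the Haglund--Paulin machinery recalled in Section~\ref{space with walls}. Concretely, let $\sigma \colon \F{k} \hookrightarrow \Out{\F{n}}$ be a monomorphism whose image is a tied, one-sided unipotent subgroup. By Theorem~\ref{BFH} together with Definition~\ref{tied}, we can choose a tied, one-sided BFH-representative $({\mathcal F},\Gamma)$ of $\sigma(\F{k})$, and by Lemma~\ref{wb} we may further assume that $\Gamma$ is well-built, so that the standing assumptions of Sections~4--6 are in force. Form the suspension $K_{({\mathcal F},\Gamma)}$ and its universal covering, the mapping-cylinder ${\mathcal K}_{({\mathcal F},\Gamma)}$, on which $G^\sigma_{n,k} = \pi_1(K_{({\mathcal F},\Gamma)})$ (Lemma~\ref{suspension}) acts freely by deck transformations.

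Next, I would equip ${\mathcal K}_{({\mathcal F},\Gamma)}$ with a collection $\mathcal{W}$ of vertizontal walls, namely the union of the vertical walls of Definition~\ref{mur vertical} with the diagonal walls of Definition~\ref{definition de mur diagonal}. The first key input is Proposition~\ref{action}, which tells us that $({\mathcal K}_{({\mathcal F},\Gamma)},\mathcal W)$ is a genuine space with walls on which $G^\sigma_{n,k}$ acts: the $G^\sigma_{n,k}$-invariance of each of the two families of walls is recorded in Lemmas~\ref{stabilisateur vertical} and~\ref{stabilisateur diagonal}, while Propositions~\ref{vertical} and~\ref{finitude des diagonal} guarantee finiteness of the wall distance between any two elements.

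The second, and more substantial, input is Proposition~\ref{proprete}, which states that this action on $({\mathcal K}_{({\mathcal F},\Gamma)},\mathcal{W})$ is proper. This is where all the combinatorial work of the preceding sections (blocks, biblocks, the admissible colouring, the stair generators, and the parity statement of Proposition~\ref{0 ou 2}) is used to force the number of diagonal walls spanned by a horizontal geodesic to grow linearly with its length, with the vertical walls handling the vertical direction via the classical free-group wall structure.

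Given these two propositions, the theorem is essentially an application of the Haglund--Paulin theorem recalled immediately after Definition of spaces with walls: a discrete group that acts properly on a space with walls is a-T-menable. Applying this to $G = G^\sigma_{n,k}$ and $(X,\mathcal W) = ({\mathcal K}_{({\mathcal F},\Gamma)},\mathcal W)$ yields the conclusion, and in fact the sharper Chatterji--Niblo/Nica statement (Theorem~\ref{ceci est un rappel}) simultaneously produces a proper isometric action of $G^\sigma_{n,k}$ on a CAT(0) cube complex $Cb(\mathcal W)$, which is precisely the upgrade needed to match the statement of the main Theorem~\ref{the theorem}. The main obstacle here is not in this final assembly---which is essentially formal once Propositions~\ref{action} and~\ref{proprete} are in hand---but in the preceding work that established them; so my proof proposal for Theorem~\ref{interlude} itself is simply to cite the two propositions and apply the Haglund--Paulin/Chatterji--Niblo theorem.
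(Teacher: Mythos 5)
Your proposal is correct and follows exactly the paper's route: the paper derives Theorem \ref{interlude} immediately from Propositions \ref{action} and \ref{proprete} combined with the Haglund--Paulin criterion, which is precisely the assembly you describe. No gaps.
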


\begin{proposition} 
\label{allez}
Let $({\mathcal K}_{({\mathcal F},\Gamma)},\mathcal W)$ be the space with walls structure for $G^\sigma_{n,k}$ as given by Proposition \ref{action}. Then $dim(Cb({\mathcal W})) < \infty$ (see Theorem \ref{ceci est un rappel}). 
\end{proposition}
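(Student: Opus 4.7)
By Theorem \ref{ceci est un rappel}, combined with Propositions \ref{action} and \ref{proprete}, $\dim(Cb({\mathcal W}))$ equals the supremum of cardinalities of finite collections of pairwise crossing walls in $\mathcal W$. The plan is to bound this supremum by stratifying walls according to type and label, and showing that within each stratum no two walls can cross.

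Step one is to observe that no two vertical walls cross. By Definition \ref{mur vertical}, each vertical wall is the $(\pi^\infty_k)^{-1}$-image of a classical wall on the Cayley tree $T^\infty_k$. For any two distinct 1-cells $e_1, e_2$ of a tree, removing both yields exactly three connected components, so among the four quadrant-intersections defining a crossing one must be empty; hence no two classical walls of $T^\infty_k$ cross. Since $\pi^\infty_k$ is surjective and vertical walls are literal pullbacks of classical walls, the non-crossing property transfers to $\mathcal W$. Consequently any pairwise crossing family contains at most one vertical wall.

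Step two is to bound the number of mutually crossing diagonal walls. By Definition \ref{definition de mur diagonal} the label of $W(E)$ is the $\Gamma$-label of the EoE-cell $E$, taking at most $EoE(\Gamma)$ values. I would then establish that two diagonal walls $W(E_1)$ and $W(E_2)$ sharing a label cannot cross: if the labels agree then $E_2 = g \cdot E_1$ for some $g \in G^\sigma_{n,k}$, so by $G^\sigma_{n,k}$-invariance and Lemma \ref{stabilisateur diagonal} one has $W(E_2) = g \cdot W(E_1)$. Using the tree structure of each horizontal sheet ${\mathcal T}_w$, the coherence of cuts provided by Proposition \ref{0 ou 2}, and the tied, one-sided hypotheses via Corollary \ref{toutcapourca} and Lemma \ref{facilemaisimportant}, I would then argue that the pattern of $E_1$-cuts and its left $g$-translate cannot produce all four quadrants. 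Combined with the vertical bound of Step one, this gives $\dim(Cb({\mathcal W})) \leq EoE(\Gamma) + 1 < \infty$.

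The main obstacle is the same-label non-crossing claim for diagonal walls. The elementary $E$-cuts distribute both horizontally (the nuclei of the $E$-biblock) and vertically (the $t_i$-cells at positively colored vertices), and are then propagated by the whole replicative subgroup ${\mathfrak R}(E)$, so for a general $g$ preserving the label the two cut-patterns can interleave across many horizontal sheets in a way that is not \emph{a priori} incompatible with producing all four quadrants. Ruling this out will require a careful comparison of ${\mathfrak R}(E_1)$ with its conjugate $g {\mathfrak R}(E_1) g^{-1} = {\mathfrak R}(E_2)$ and of the orbits of their elementary cuts on ${\mathcal K}_{({\mathcal F},\Gamma)}$; should this not suffice, I would further refine the stratification by combining the label of $E$ with a height or orbit invariant drawn from the filtration of $\Gamma$, and check that this finer classification still has only finitely many classes that can pairwise cross.
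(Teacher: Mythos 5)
Your Step one is fine and matches the paper's Lemma \ref{2} (a pairwise-crossing family contains at most one vertical wall, because classical walls of a tree never cross). But the key claim of Step two --- that two diagonal walls $W(E_1)$, $W(E_2)$ with the same $\Gamma$-label cannot cross --- is false, and the target bound $EoE(\Gamma)+1$ cannot hold. The construction of the $E$-biblock forces same-label crossings: the companion block of the white $E$-block is built on $Z^i_{top}$ where $Z$ is a left-translate of $E$, so its nucleus $E'$ carries the same label as $E$, and the paper's Lemma \ref{minimal bound} explicitly exhibits $W(E)$ and $W(E')$ as crossing walls (indeed it proves the sharp lower bound $\dim(Cb(\mathcal W)) \geq 2M+2$, realized already by $\alpha(x_1)=x_1$, $\alpha(x_2)=x_2x_1$ on $\F{2}$). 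Intuitively this must happen: a single non-topmost EoE-cell $E'$ is simultaneously an elementary cut for $W(E')$ and a non-elementary cut (a translate of a nucleus) for $W(E)$, so both walls separate the endpoints of $E'$ and their sides interleave. Your fallback of conjugating ${\mathfrak R}(E_1)$ to ${\mathfrak R}(E_2)$ and refining by height will not rescue the statement, because the crossing pairs above live at the same height and in the same $G^\sigma_{n,k}$-orbit.

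What the paper actually proves is the weaker, correct statement that the number of pairwise-crossing same-label non-topmost diagonal walls is \emph{finite} (not $\leq 1$), and this is where all the work lies. First it isolates topmost diagonal walls (at most one per crossing family, since such a wall has a single cut in each horizontal tree --- a case your stratification does not treat separately). Then Lemma \ref{contrainte horizontale} reduces to crossings occurring inside a single horizontal tree, and Lemma \ref{vi} bounds those: either the interleaved cuts are joined by geodesics running over bridges, whose number is controlled by the height of the bridge, or one cut is a left ${\mathfrak S}_{hor}(E)$-translate of the other, in which case quasi-convexity of the finitely generated horizontal stabilizer confines the second wall's biblock to a bounded neighborhood, leaving finitely many possibilities. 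If you want to complete your proof you must replace your non-crossing claim by a finiteness argument of this kind; the stratification by label alone gives $EoE(\Gamma)$ classes, but each class can contribute more than one wall to a pairwise-crossing family.
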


\begin{proof}

\begin{lemma}
\label{2}
A collection of vertizontal walls which pairwise cross contains at most one vertical wall and at most one topmost diagonal wall.
\end{lemma}

\begin{proof}
Vertical walls have only one cut in each vertical tree, and topmost diagonal walls have only one cut in each horizontal tree, hence the conclusion.
\end{proof}

We can thus deal only with collections of diagonal walls defined by non topmost EoE-cells.

\begin{lemma}
\label{contrainte horizontale}
Assume that two non-topmost diagonal walls $W_1,W_2$ with same label cross but that their intersections $W_i \cap {\mathcal T}_w$ with some horizontal tree ${\mathcal T}_w$ do not cross in ${\mathcal T}_w$. Then, any third distinct wall $W_3$ with same label such that $\{W_1,W_2,W_3\}$ is a collection of pairwise-crossing walls satisfies that $W_3$ and $W_i$, $i=1,2$, cross in any horizontal tree.
\end{lemma}

\begin{proof}
 Up to permutation of the indices, since $W_1 \cap {\mathcal T}_w$ and $W_2 \cap {\mathcal T}_w$ do not cross in ${\mathcal T}_w$, there exists a cut $E_1$ of $W_1$ such that all the cuts of $W_2$ in ${\mathcal T}_w$ lie in a same side of $E_1$. Since the diagonal walls $W_1$ and $W_2$ share a same label and since they cross in ${\mathcal K}_{({\mathcal F},\Gamma)}$, this implies that the stair-orbit of one cut intersects the lift-orbit of the other.  If $E$ denotes this cut, there is only one wall for which the lift-orbit of $E$ is a set of cuts (the wall $W(E)$) and one wall for which the stair-orbit of $E$ is a set of cuts. Therefore, if a third distinct wall $W_3$ crosses both $W_1$ and $W_2$ then $W_3$ and $W_i$ ($i=1,2$) cross in any horizontal tree.
\end{proof}

\begin{lemma}
\label{vi}
The maximal number of non-topmost diagonal walls with same $\Gamma$-label which pairwise cross in a same horizontal tree is finite.
\end{lemma}. 

\begin{proof}
Let us consider two non-topmost EoE-cells $E_0$ and $E_1$ with same $\Gamma$-label in ${\mathcal T}_w$. Assume that the walls $W_i = W(E_i)$, $i=0,1$, cross in ${\mathcal T}_w$. Then there exits cuts $c^1_i, c^2_i$ of $W_i$ in some horizontal geodesic  $u$ such that, up to permuting $W_0$ and $W_1$, $u=c^1_0 \cdots c^1_1 \cdots c^2_0 \cdots c^2_1$. Assume that neither $c^2_0$ is a left ${\mathfrak S}_{hor}(E_0)$-translate of $c^1_0$ nor $c^2_1$ is a left ${\mathfrak S}_{hor}(E_1)$-translate of $c^1_1$. This implies that the geodesics connecting them lie over some bridges: such geodesics contain exactly $2$-cells of the height of the bridge which is the height of the geodesic. This implies that the maximal number of walls for which one might have such overlapping of geodesics is finite. We now assume that there is $i \in \{0,1\}$ such that $c^2_i$ is a left ${\mathfrak S}_{hor}(E_i)$-translate of $c^1_i$. Then, since ${\mathfrak S}_{hor}(E_i)$ is finitely generated and so quasi-convex, a left-translation by an element of  ${\mathfrak S}_{hor}(E_i)$ carries $c^2_i$ to a bounded neighborhood of the $c^2_{i+1}$-biblock, where the indices are written modulo $2$. Since biblocks are finite, there are only a finite number of walls possible.
\end{proof}

Since the number of distinct $\Gamma$-labels of walls is equal to $EoE(\Gamma)$, and so finite, Lemma \ref{contrainte horizontale} and Lemma \ref{vi} give that there is a finite number of non-topmost diagonal walls in a family of vertizontal walls which pairwise cross. Together with Lemma \ref{2}, this gives Proposition \ref{allez}. 
\end{proof}

Propositions \ref{action}, \ref{proprete} and \ref{allez}, together with Theorem \ref{ceci est un rappel} give the following

\begin{theorem}
\label{blablabla}
The suspension of a finite rank free group by a tied, one-sided unipotent finite rank free subgroup of outer automorphisms $G^\sigma_{n,k}$ acts properly on some finite-dimensional cube complex $Cb(\mathcal W)$ where $({\mathcal K}_{({\mathcal F},\Gamma)},\mathcal W)$ is the space with walls structure given by Proposition \ref{action}.
\end{theorem}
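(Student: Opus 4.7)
The plan is to assemble the theorem from the three propositions immediately preceding it, together with the Chatterji--Niblo theorem recalled as Theorem \ref{ceci est un rappel}. Since all the heavy lifting (building the walls, establishing properness, and bounding the dimension of the cubulation) has already been done, this final statement is essentially a bookkeeping exercise.

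First I would invoke Proposition \ref{action} to declare that $({\mathcal K}_{({\mathcal F},\Gamma)},\mathcal W)$ is a genuine space-with-walls in the sense of Haglund--Paulin, on which $G^\sigma_{n,k}$ acts by permuting walls (the wall-invariance under the left action having been checked for vertical walls in Lemma \ref{stabilisateur vertical} and for diagonal walls in the discussion surrounding Lemma \ref{stabilisateur diagonal}). Next, Proposition \ref{proprete} upgrades this to a proper action: for any infinite sequence $g_i = h_i v_i$ of elements of $G^\sigma_{n,k}$ tending to infinity, the wall-distance from $e$ to $g_i$ tends to infinity, where the vertical coordinate is handled by Lemma \ref{trivialite} and the horizontal coordinate is controlled by the linear bound on the number of diagonal walls crossed by a horizontal geodesic (Lemma \ref{C} together with Lemma \ref{il fallait y penser}).

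At this stage, Theorem \ref{ceci est un rappel} already yields a proper isometric action of $G^\sigma_{n,k}$ on the CAT(0) cube complex $Cb(\mathcal W)$ associated to $({\mathcal K}_{({\mathcal F},\Gamma)},\mathcal W)$ via the Chatterji--Niblo (equivalently, Nica or Sageev) construction. To strengthen the conclusion to \emph{finite-dimensional} cube complex, I would apply Proposition \ref{allez}, which bounds the cardinality of any pairwise-crossing collection of vertizontal walls: at most one vertical wall and at most one topmost diagonal wall can appear (Lemma \ref{2}), and the non-topmost diagonal walls in such a collection are constrained both by the finite number $EoE(\Gamma)$ of possible $\Gamma$-labels (Lemma \ref{contrainte horizontale}) and by the finiteness of pairwise-crossing diagonal walls of a given label in a single horizontal tree (Lemma \ref{vi}). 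The second clause of Theorem \ref{ceci est un rappel} then identifies this uniform bound with $\dim Cb({\mathcal W})$.

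The main obstacle is purely expository rather than mathematical: one should verify that the hypotheses of Theorem \ref{ceci est un rappel} are applied to exactly the space with walls constructed in Proposition \ref{action} and not to some slightly different object. In particular, I would make explicit that the \emph{discrete} group $G^\sigma_{n,k}$ (a free-by-free group, hence finitely generated and torsion-free) satisfies the discreteness hypothesis of the Chatterji--Niblo theorem, and that the $G^\sigma_{n,k}$-invariance of $\mathcal W$ from Proposition \ref{action} together with the properness from Proposition \ref{proprete} is exactly what is required as input. With these verifications in place, the conclusion of Theorem \ref{blablabla} is immediate.
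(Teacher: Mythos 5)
Your proposal is correct and follows exactly the paper's argument: Theorem \ref{blablabla} is stated in the paper as an immediate consequence of Propositions \ref{action}, \ref{proprete} and \ref{allez} combined with Theorem \ref{ceci est un rappel}, which is precisely the assembly you describe. The additional verifications you mention (discreteness of $G^\sigma_{n,k}$, matching of hypotheses) are harmless elaborations of the same route.
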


Let us conclude with a lemma about the minimal dimension of the above cube complex:

\begin{lemma}
\label{minimal bound}
Let $({\mathcal K}_{({\mathcal F},\Gamma)},\mathcal W)$ be the space with walls structure for $G^\sigma_{n,k}$ as given by Proposition \ref{action}. 
If $M$ is the maximal number of EoE-cells sharing a same bridge, in the same side of this bridge, then $dim(Cb({\mathcal W})) \geq 2 M+2$. This lower-bound is sharp.
\end{lemma}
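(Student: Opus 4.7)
The plan is to apply Theorem \ref{ceci est un rappel}, which reduces the dimension bound to a purely combinatorial question: $\dim(Cb(\mathcal{W}))$ equals the supremum of cardinalities of finite collections of vertizontal walls that pairwise cross. So to establish the lower bound $\dim(Cb(\mathcal{W})) \geq 2M+2$, I will exhibit an explicit family of $2M+2$ such walls.

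First, let $X$ be a bridge realizing the maximum $M$, with EoE-cells $E_1,\ldots,E_M$ sitting on one side of $X$ and all sharing $X$ as a bridge in their respective biblocks. By Definition \ref{oprotein}, the corresponding companion blocks yield, on the opposite side of $X$, dual EoE-cells $E'_1,\ldots,E'_M$ that also share $X$. The candidate family consists of the $2M$ non-topmost diagonal walls $\{W(E_j)\}_{j=1}^M \cup \{W(E'_j)\}_{j=1}^M$, together with one vertical $j$-wall and one topmost diagonal wall $W(E^*)$ for some topmost EoE-cell $E^*$ with a distinct $\Gamma$-label (invoking Lemma \ref{2} to justify that we cannot hope for more walls of these latter two types).

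The core step is to verify that each pair in this family of $2M+2$ walls crosses. For a pair $(W(E_a), W(E_b))$ among the non-topmost diagonal walls, I will exploit the fact that $X$ is a cut appearing in the crown of each biblock, so the replicative subgroups $\mathfrak{R}(E_a)$ and $\mathfrak{R}(E_b)$ each produce cuts on both sides of $X$ via the combination of stair- and lift-orbits. Concretely, I will locate along a horizontal geodesic traversing the biblock region four vertices, one in each of the Boolean intersections $VH_L(E_a) \cap VH_L(E_b)$, $VH_L(E_a) \cap VH_R(E_b)$, $VH_R(E_a) \cap VH_L(E_b)$, $VH_R(E_a) \cap VH_R(E_b)$; the interleaving of elementary $E_a$- and $E_b$-cuts along a geodesic passing near $X$ is what makes this possible. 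The vertical wall crosses all diagonal walls because each non-topmost diagonal wall has cuts in infinitely many vertical cylinders, and $W(E^*)$ crosses each non-topmost $W(E_j)$ (resp.\ $W(E'_j)$) since the unique horizontal cut of $W(E^*)$ in any tree can be placed on either side of any non-topmost wall's cuts via the $G^\sigma_{n,k}$-action.

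The main obstacle will be the interleaving argument for the $2M$ non-topmost walls, because one has to handle uniformly the various cases (pairs from the $E_j$'s among themselves, pairs from the $E'_j$'s, and mixed pairs) and show that the bridge-sharing hypothesis is exactly what forces the cuts to interleave rather than merely coexist on one side. For sharpness, I plan to revisit the Formanek-Procesi subgroups of \cite{Gautero} (where $M=1$, so the bound predicts $\dim \geq 4$) and explicitly verify, using Lemma \ref{contrainte horizontale} and Lemma \ref{vi}, that no family of $2M+3$ pairwise-crossing vertizontal walls can exist there: the quasi-convexity of the horizontal $E$-stabilizers combined with finiteness of biblocks caps the count of walls with a fixed $\Gamma$-label that can pairwise cross in a single horizontal tree at exactly $2M$, and Lemma \ref{2} caps the auxiliary contribution at $2$.
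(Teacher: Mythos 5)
Your overall skeleton matches the paper's: reduce to counting pairwise-crossing walls via Theorem \ref{ceci est un rappel}, take the $M$ EoE-cells sharing a bridge $X$ together with the nuclei of their companion blocks to get $2M$ diagonal walls, and then add two further walls. The paper's two extra walls, however, are a vertical wall (chosen incident to the horizontal tree containing all the cuts in play) and \emph{the diagonal wall $W(X)$ of the bridge itself}; the crossing of the whole family is then covered by the single assertion that all walls associated to cuts incident to a same bridge pairwise cross. Your substitute for $W(X)$ --- a topmost diagonal wall $W(E^*)$ with a distinct $\Gamma$-label --- is where the argument has a genuine gap. A topmost wall has exactly one (horizontal) cut per horizontal tree, and whether a \emph{fixed} such wall crosses each of the $2M$ walls $W(E_j), W(E'_j)$ is a nontrivial positional question; your justification that its cut ``can be placed on either side of any non-topmost wall's cuts via the $G^\sigma_{n,k}$-action'' conflates translating the wall with the wall being a fixed partition. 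Translating $W(E^*)$ to arrange its position relative to $W(E_a)$ may destroy the arrangement relative to $W(E_b)$, so this does not produce one wall crossing all $2M+1$ others. The bridge wall is the natural choice precisely because its defining cut is the cell all the other biblocks are incident to, so the interleaving you would otherwise have to engineer is automatic.

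On sharpness, the paper does something simpler than what you propose: it exhibits a single example (the automorphism $x_1\mapsto x_1$, $x_2\mapsto x_2x_1$ of $\F{2}$) in which the bound $2M+2$ is attained, rather than proving an upper bound of $2M+2$ for a whole class. Your plan to cap the count at $2M+2$ in the Formanek--Procesi case via Lemma \ref{contrainte horizontale} and Lemma \ref{vi} over-reads Lemma \ref{vi}: that lemma only gives \emph{finiteness} of the number of same-label non-topmost walls pairwise crossing in one horizontal tree, not the bound $2M$ you assert. If you keep your route you would need to extract an explicit quantitative bound from the quasi-convexity/finiteness argument in its proof, which is additional work the paper avoids by simply verifying the count in the concrete example.
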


\begin{proof}
By Theorem \ref{ceci est un rappel}, $dim(Cb({\mathcal W}))$ is the supremum of the cardinalities of family of walls which pairwise cross. By construction, there are two walls crossed by a same non-topmost EoE-cell $E$: the wall $W(E)$ and the wall $W(E^\prime)$ where $E^\prime$ is the nucleus of the companion of the white $E$-block. These two walls cross: this is easily checked as it is easily checked that all the walls associated to cuts incident to a same bridge pairwise cross. Now add a vertical wall defined by a vertical cell incident to the horizontal tree containing all the preceding horizontal cells, and the wall defined by the bridge and you get a collection of walls which pairwise cross, hence the lemma. It is impossible to  increase this number since it is attained for instance when considering the automorphism $\alpha : \F{2} \rightarrow \F{2}$ defined by $\alpha(x_1)=x_1$ and $\alpha(x_2) = x_2 x_1$.
\end{proof}

\section{Conclusion for suspensions of free subgroups of outer automorphisms of $\F{n}$ admitting a tied, one-sided finite-index unipotent subgroup}

\begin{lemma} 
\label{bof1}
Given any two groups $G$ and $H$ together with a monomorphism $\sigma \colon G \hookrightarrow \Out{H}$,
if $G_0$ is a finite index subgroup of $G$ then $H \rtimes_\sigma G_0$ is a finite index subgroup of $H \rtimes G$.
\end{lemma}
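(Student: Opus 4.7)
The plan is to exploit the short exact sequence
$$1 \longrightarrow H \longrightarrow H \rtimes_\sigma G \stackrel{\pi}{\longrightarrow} G \longrightarrow 1,$$
where $\pi$ is the projection onto the second factor. This is a well-defined surjective group homomorphism with kernel the normal subgroup $H$, because the semi-direct product law $(h,g) \cdot (h',g') = (h \cdot \widetilde{\sigma}(g)(h'),\, gg')$ (for any chosen set-theoretic lift $\widetilde{\sigma}$ of $\sigma$, which suffices to define $\pi$) leaves the second coordinate multiplicative.

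First I would check that $\pi^{-1}(G_0) = H \rtimes_\sigma G_0$ as subsets of $H \rtimes_\sigma G$: an element $(h,g)$ lies in the preimage if and only if $g \in G_0$, which is precisely the underlying set of $H \rtimes_\sigma G_0$; and the subgroup structure inherited from $H \rtimes_\sigma G$ agrees with the semi-direct product structure on $H \rtimes_\sigma G_0$ since the action of $G_0$ on $H$ is the restriction of the action of $G$.

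Next, I would invoke the standard correspondence of cosets under a surjective homomorphism: if $\pi \colon K \twoheadrightarrow G$ is surjective and $G_0 \leq G$, then left cosets of $\pi^{-1}(G_0)$ in $K$ are in bijection with left cosets of $G_0$ in $G$ via $k \cdot \pi^{-1}(G_0) \mapsto \pi(k) \cdot G_0$. Applied here this gives
$$[H \rtimes_\sigma G : H \rtimes_\sigma G_0] = [G : G_0],$$
which is finite by hypothesis.

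There is no real obstacle; the only minor point to be careful about is that $\sigma$ lands in $\Out{H}$ rather than $\Aut{H}$, so one should note that a semi-direct product notation $H \rtimes_\sigma G$ under this convention means any group fitting in the above short exact sequence inducing $\sigma$ on outer automorphisms, for which the projection $\pi$ is still canonically defined and the argument is unchanged.
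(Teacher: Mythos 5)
Your proof is correct and follows essentially the same route as the paper: both arguments reduce the index $[H \rtimes_\sigma G : H \rtimes_\sigma G_0]$ to $[G:G_0]$ via the coset decomposition of $G$ by $G_0$, the paper doing so by an explicit manipulation of normal forms $gh$ and yours by packaging the same computation into the standard preimage-index fact for the projection $\pi$. Your remark about $\sigma$ landing in $\Out{H}$ rather than $\Aut{H}$ is a worthwhile clarification that the paper leaves implicit.
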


\begin{proof}
Any element of $H \rtimes_\sigma G$ is uniquely written as $gh$, $g \in G$ and $h \in H$. Since $G_0$ has finite index in $G$,
there exists a finite number of elements $g_1,\cdots,g_r$ such that any $g \in G$ is in some left-class $g_i G_0$. Then for any element
$gh$ of $H \rtimes_\sigma G$, there is some $i$ such that $gh \in g_i G_0 h$. Any element in $G_0 h$ is in the semi-direct product $H \rtimes_\sigma G_0$
hence the conclusion.
\end{proof}

%At this point, we get the following
%
%\begin{theorem}
%\label{intermediaire}
%Let $k, n$ be two positive integers and let $\F{n}, \F{k}$ respectively denote the rank $n$ and rank $k$ free groups. Let $\Out{\F{n}}$ denote the group of outer automorphisms of $\F{n}$. If $\sigma \colon \F{k} \hookrightarrow \Out{\F{n}}$ is a monomorphism such that $\sigma(\F{k})$ admits a tied, one-sided finite-index unipotent subgroup $\mathcal U$ of outer automorphisms of $\F{n}$ then the group $\F{n} \rtimes_\sigma \F{k}$ is a-T-menable in the sense of Gromov.
%\end{theorem}
%
%\begin{proof}
%By Schreir's theorem, $\mathcal U$ is a free group and since it is of finite-index in $\F{k}$, it is a finitely generated subgroup.  
%By Theorem \ref{interlude},  $\F{n} \rtimes \mathcal U$ is a-T-menable. By Lemma \ref{bof1} so is $\F{n} \rtimes_\sigma \F{k}$.
%\end{proof}

\begin{lemma} 
\label{bof2}
If $G$ is a group admitting a finite-index subgroup $G_0$ which acts properly isometrically on some finite dimensional
CAT(0) cube complex then so does $G$, on a cube complex of the same dimension.
\end{lemma}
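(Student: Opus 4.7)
The plan is to construct the desired cube complex for $G$ by inducing the $G_0$-action on $X_0$. Set $n := [G:G_0]$, choose coset representatives $g_1 = e, g_2, \ldots, g_n$ for $G/G_0$, and form $Y := G \times_{G_0} X_0$, the quotient of $G \times X_0$ by the equivalence $(gh_0, x) \sim (g, h_0 \cdot x)$ for $h_0 \in G_0$. Every equivalence class admits a unique representative $(g_i, x)$, so as a topological space $Y$ is the disjoint union of $n$ copies of $X_0$, indexed by the cosets of $G_0$; denote the $j$-th copy by $X_0^{(j)}$. Endow $Y$ with the cube-complex structure inherited from $X_0$ on each component: each cube of $Y$ lies inside a single copy of $X_0$, so $\dim(Y) = \dim(X_0) = d$.

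The $G$-action on $Y$ is given by $g \cdot [g', x] := [gg', x]$. Concretely, writing $g g_i = g_{\sigma(g,i)} h(g,i)$ with $\sigma(g,\cdot) \in S_n$ the permutation coming from $G \curvearrowright G/G_0$ and $h(g,i) \in G_0$, the formula becomes $g \cdot (g_i, x) = (g_{\sigma(g,i)}, h(g,i) \cdot x)$. Hence $G$ permutes the $n$ components of $Y$ and, inside each component, acts by a cubical isometry coming from $G_0 \curvearrowright X_0$. Each connected component of $Y$ is a copy of $X_0$ and so is CAT(0); we regard $Y$ itself as a CAT(0) cube complex of dimension $d$ in the natural ``each component is CAT(0)'' sense. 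Properness of $G \curvearrowright Y$ reduces to properness of $G_0 \curvearrowright X_0$: picking a vertex $v_0 \in X_0$ and considering $[e, v_0] \in Y$, one checks that for a compact $K \subset Y$ only finitely many components $X_0^{(j)}$ meet $K$ (say those with $K_j := K \cap X_0^{(j)} \neq \emptyset$), and
\[
\{g \in G : g \cdot [e, v_0] \in K\} = \bigsqcup_{j} g_j \cdot \{h \in G_0 : h \cdot v_0 \in K_j\},
\]
which is a finite union of finite sets by properness of $G_0 \curvearrowright X_0$.

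The only delicate point, and hence the main obstacle, is that $Y$ is disconnected when $G_0 \neq G$, while the term ``CAT(0) cube complex'' is sometimes implicitly restricted to connected complexes. The resolution I would adopt is to observe that this lemma is applied only to obtain a proper isometric action of $G$ on a cube complex of fixed finite dimension: what really matters for the downstream application of Theorem \ref{ceci est un rappel} is the existence of a $G$-invariant space-with-walls structure with bounded pairwise-crossing number, and the disjoint union $Y$ inherits such a structure from the hyperplanes of $X_0$ on each copy, with the same bound $d$. Passing, if desired, through the Sageev/Chatterji-Niblo cube complex dual to this wall structure yields a connected CAT(0) cube complex of dimension $\leq d$ on which $G$ still acts properly by isometries, matching the stated dimension bound. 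Either way one obtains a proper isometric $G$-action on a CAT(0) cube complex of the same dimension as $X_0$.
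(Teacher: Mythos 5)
Your proof is correct and is essentially the paper's argument in expanded form: the paper's two-line proof invokes a ``finite covering'' of the cube complex, which (since a CAT(0) cube complex is simply connected) can only mean the disjoint union of $[G:G_0]$ copies, i.e.\ exactly your induced complex $G \times_{G_0} X_0$ with the permutation-plus-$G_0$ action, and the properness and dimension statements follow as you describe. Your added discussion of the disconnectedness of $Y$, and of passing if necessary to the dual complex of the induced wall structure, only makes explicit a point the paper glosses over.
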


\begin{proof}
A finite covering of a $j$-dimensionnal CAT(0) cube complex is a $j$-dimensionnal CAT(0) cube complex. By lifting the given action of $G_0$, one gets the lemma.
\end{proof}

\begin{proof}[Proof of Theorem \ref{the theorem}]

We consider the group $G^\sigma_{n,k} := \F{n} \rtimes_\sigma \F{k}$ with $\sigma(\F{k})$ a subgroup of $\Out{\F{n}}$ admitting a tied, one-sided finite-index unipotent subgroup $\mathcal U$. By Lemma \ref{bof1}, since $\mathcal U$ has finite-index in $\F{k}$, $\F{n} \rtimes \mathcal U$ is of finite-index in $G^\sigma_{n,k}$. By Theorem \ref{interlude}, since a-T-menability is preserved up to finite index, $G^\sigma_{n,k}$ is a-T-menable. By Theorem \ref{blablabla}, $\F{n} \rtimes \mathcal U$
acts properly isometrically on some finite-dimensional cube complex. Lemmas \ref{bof1} and \ref{bof2} then give the action of $G^\sigma_{n,k}$ on a cube complex of the same dimension (and Lemma \ref{minimal bound} gives a sharp minimal  bound for the dimension of this cube complex).
%Since a finite covering of non-locally complex is non-locally compact, Lemma \ref{non-compact} gives the non-local compacity of the cube complex in the case where $G^\sigma_{n,k}$ is not virtually a direct product.
%Theorems \ref{intermediaire} and \ref{main} give Theorem \ref{the theorem}.
\end{proof}

\section{Examples}

\label{exemples}

\subsection{The Formanek-Procesi group with $n=3$}

The definition has been recalled in the introduction. A BFH-representative is the rose with $3$ petals $\mathcal R_3$, where each petal is identified to a different generator $x_i$. The filtration
is given by $\{x_1\} \subset \{x_1,x_2\} \subset \{x_1,x_2,x_3\}$ (we only list the edges of each graph of the filtration, every one contains
the unique vertex  of $\mathcal R_3$). The filtered homotopy equivalences are obtained by setting $f_j(x_i)$ to be the
edge-path reading $\sigma(t_j)(x_i)$.

There is exactly one topmost edge: the edge $x_3$. Figure \ref{FPhorizontal} shows the cuts for a piece of a diagonal wall with label $x_3$. The thick edges are the cuts.

\begin{figure}[htbp]
{\centerline{\includegraphics[height=6cm, viewport = 150 560 460 750,clip]{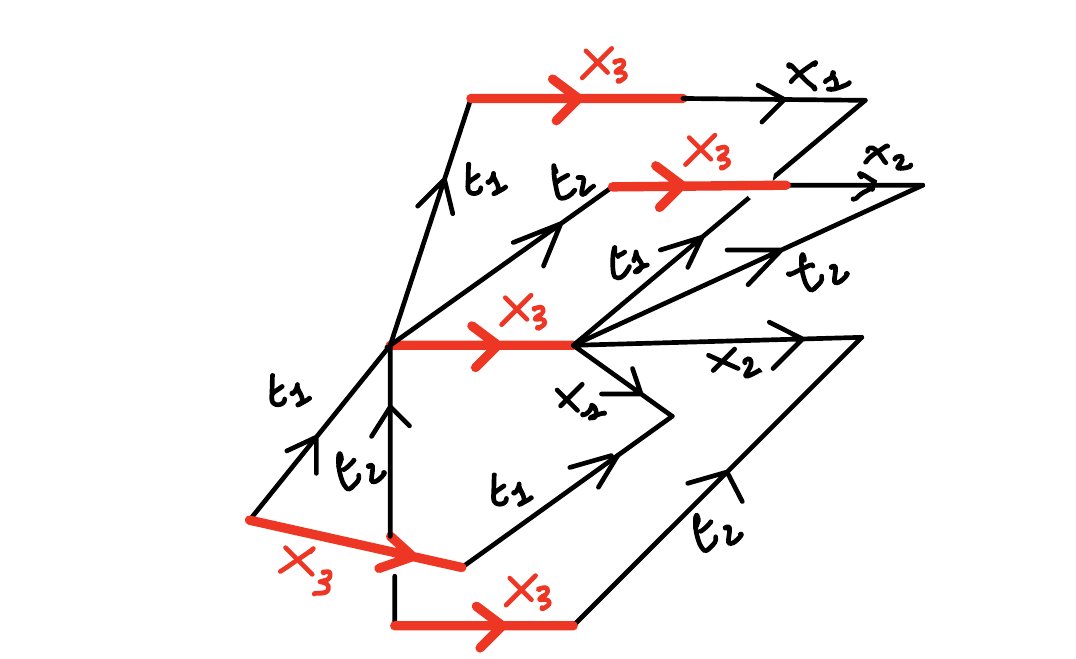}}} \caption{\label{FPhorizontal} Cuts defining a diagonal wall with label $x_3$ for the FP-group }
\end{figure}

There is in addition $x_2$ as $1$-topmost edge and $x_1$ as $2$-topmost edge. We refer the reader to Figure \ref{FP180621} for an illustration of the cuts for a wall with label $x_1$. 

\begin{figure}[htbp]
{\centerline{\includegraphics*[height=5cm,viewport= 70 455 550 735 clip]{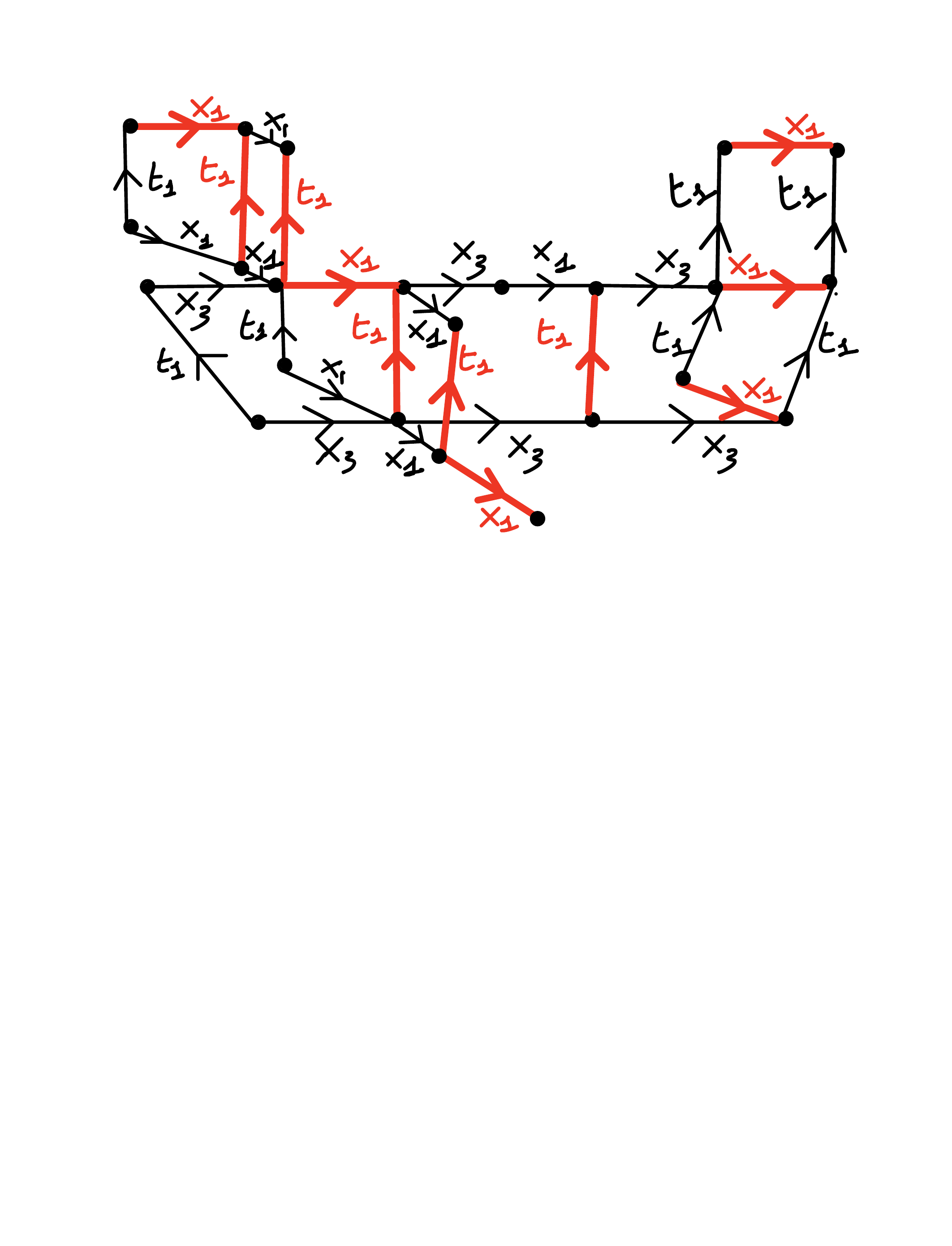}}} \caption{\label{FP180621} Cuts defining a diagonal wall with label $x_1$ for the FP-group}
\end{figure}

\begin{figure}[htbp]
{\centerline{\includegraphics*[height=7cm,viewport= 80 325 465 725 clip]{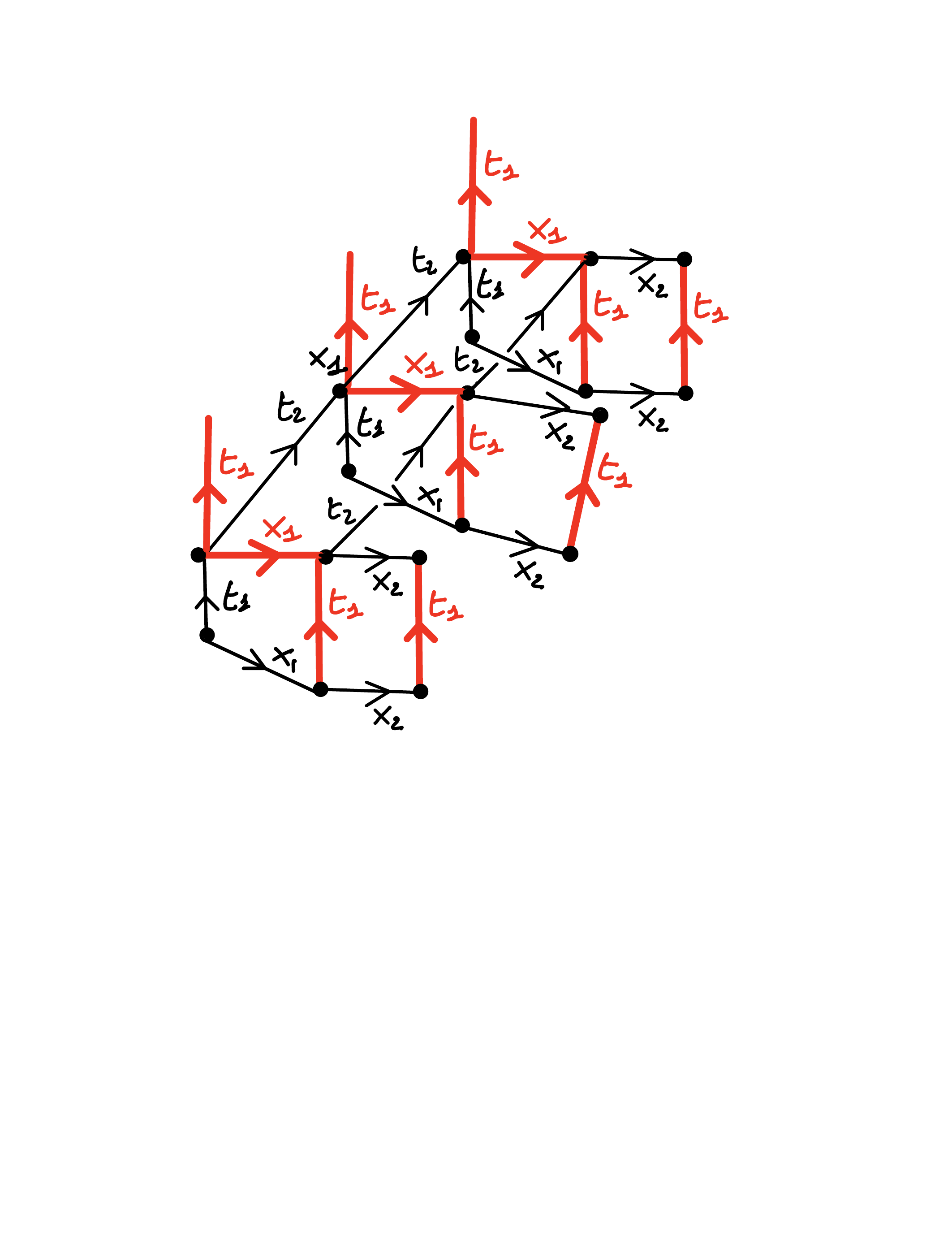}}} \caption{\label{tx2} Translates of the cuts of the companion block}
\end{figure}

We do not represent in the first picture the left-translates of the cuts by (conjugates of) $t_2$, nor the left-translates of the cuts in the companion of the white block (the cuts which are found at the left-side of this picture) by (conjugates of) $x_2$. The reader will find these other left-translates in Figures \ref{t2} (translates of the white block by conjugates of $t_2$) and \ref{tx2} (translates of the companion block by conjugates of $t_2$ and $x_2$).

\begin{figure}[htbp]
{\centerline{\includegraphics*[height=10cm,viewport= 210 310 420 735 clip]{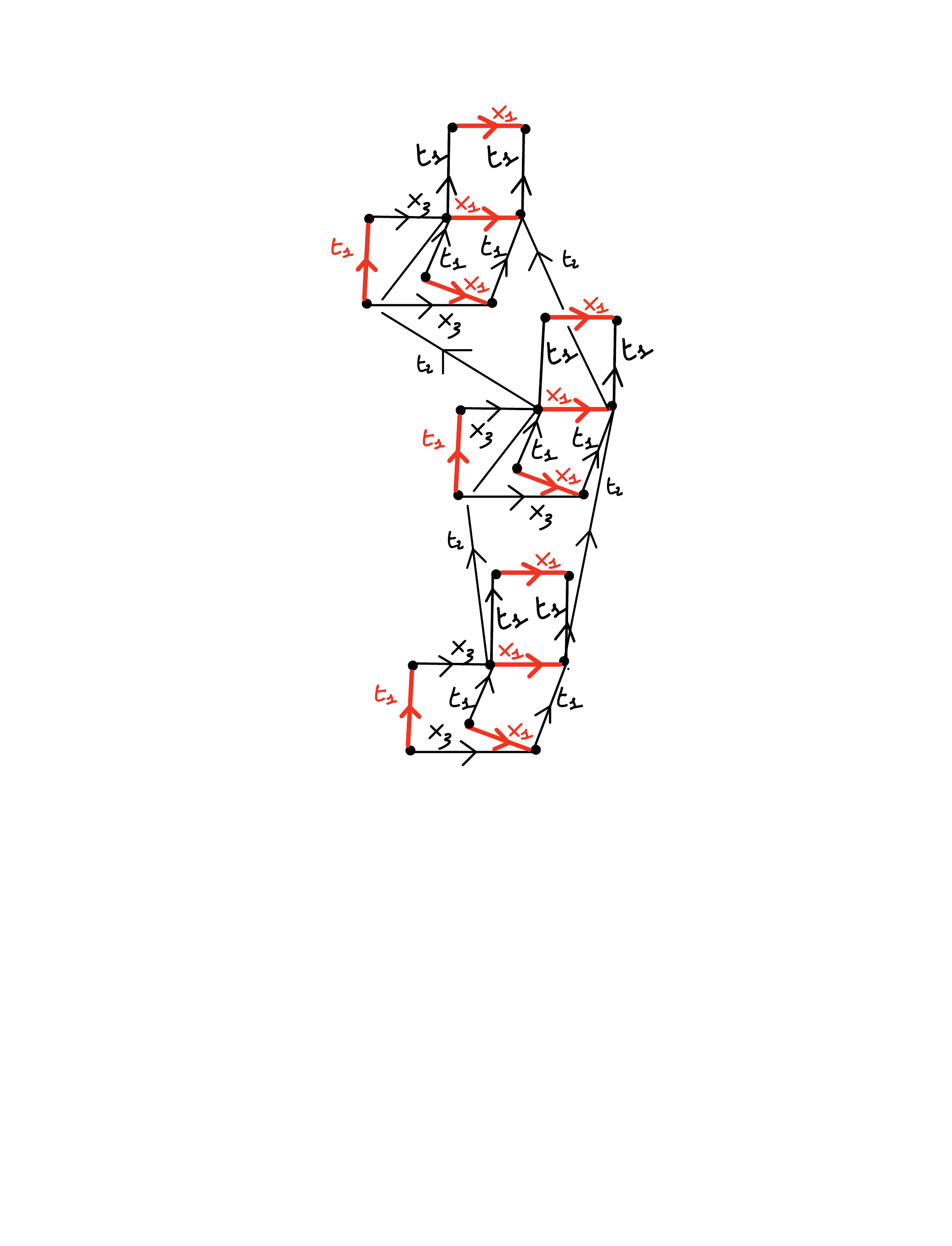}}} \caption{\label{t2} Translates of the cuts of the white block}
\end{figure}

%\begin{figure}[htbp]
%{\centerline{\includegraphics[height=6cm, viewport = 95 600 460 780,clip]{MursFP1.pdf}}} \caption{\label{exemplar} One-sided version of a diagonal wall with label $x_1$}
%\end{figure}

\newpage

\subsection{An extension of $\F{4}$}

Here $G = \F{4} \rtimes_\sigma \F{2}$ with $\F{4} = \langle x_1,x_2,x_3,x_4 \rangle$, $\F{2} = \langle t_1,t_2 \rangle$ and $\alpha_i := \sigma(t_i)$ defined by:

$$\begin{array}{ccccc}
\alpha_1 & \colon & x_4 & \mapsto & x_4x_1 \\
& & x_3 & \mapsto & x_3 x_1 \\
& & x_2 & \mapsto & x_2 \\
& & x_1 & \mapsto & x_1 \\
\end{array} \begin{array}{ccccc}
\alpha_2 & \colon & x_4 & \mapsto & x_4x_3x_2x_1 \\
& & x_3 & \mapsto & x_3 x_2 x_1 \\
& & x_2 & \mapsto & x_2  \\
& & x_1 & \mapsto & x_1 \\
\end{array}$$ 

As in the previous example, the BFH-representative is the rose, whose petals are identified to the generators $x_i$. This is a one-sided, tied subgroup of polynomially growing automorphisms. The $1$-topmost edges are $x_2,x_3,x_4$ whereas the only $2$-topmost edge is $x_4$. Figure \ref{rang4} illustrates what looks like (a piece of) the white block and the companion.

\begin{figure}[htbp]
{\centerline{\includegraphics*[height=10cm,viewport=50 270 500 720 clip]{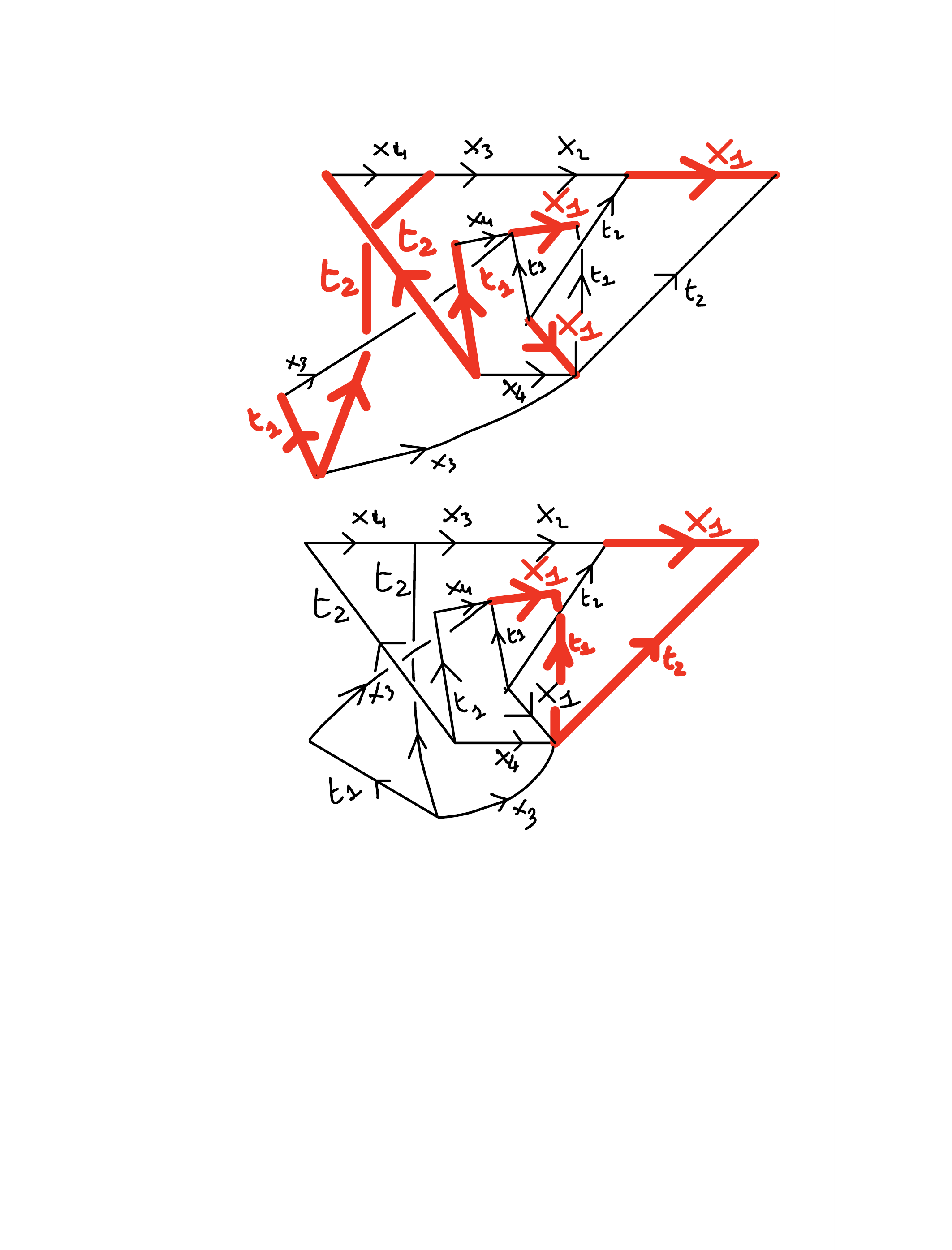}}} \caption{\label{rang4} White block and companion with label $x_1$ for the $\F{4}$-example}
\end{figure}

\bigskip

\noindent {\em Acknowledgements:} The author would like to warmly thank P.A.~ Cherix (Universit\'e de Gen\`eve, Geneva) who introduced the author to the
Haagerup property.  G.N.~Arzhantseva (Universit\"at Wien, Vienna) and P.A.~Cherix evoked the question of the a-T-menability of free-by-free,
and surface-by-free groups. At that time in Geneva, the interest of W.~Pitsch (U.A.B., Barcelona) in this question was a source of motivation. For these reasons,
all three of them deserve the gratitude of the author. He is moreover indebted to G.N.~Arzhantseva for telling him about the question of the a-T-menabilty of free-by-free
groups. Finally many thanks are also due to I. Chatterji (Universit\'e C\^ote d'Azur, Nice) for her patience when listening the numerous tries of the author to get a more general result.

\bibliographystyle{plain}
\bibliography{biblioHaagerup}

\end{document}